\title[A.s.\ convergence for P\'olya urns] 
{A.s.\ convergence 
for infinite colour P\'olya urns associated with random walks}
\date{12 March, 2018}
\author{Svante Janson}
\thanks{Partly supported by the Knut and Alice Wallenberg Foundation}
\address{Department of Mathematics, Uppsala University, PO Box 480,
SE-751~06 Uppsala, Sweden}
\email{svante.janson@math.uu.se}
\urladdr{http://www.math.uu.se/svante-janson}
\subjclass[2010]{60C05} 
\numberwithin{equation}{section}
\renewcommand\le{\leqslant}
\renewcommand\ge{\geqslant}
\theoremstyle{plain}% default
\newtheorem{theorem}{Theorem}[section]
\newtheorem{lemma}[theorem]{Lemma}
\theoremstyle{definition}
\newtheorem{example}[theorem]{Example}
\newtheorem{problem}[theorem]{Problem}
\newtheorem{remark}[theorem]{Remark}
\theoremstyle{remark}
\newenvironment{romenumerate}[1][-10pt]{% optional argument changes indentation
\addtolength{\leftmargini}{#1}\begin{enumerate}% gives (i), (ii) etc.
 }{\end{enumerate}}
\newenvironment{PXenumerate}[1]{%  argument yields prefix
\addtolength{\leftmargini}{1pt}%
\begin{enumerate}% gives (#1 1), (#1 2) etc.
 }{\end{enumerate}}
\newcounter{oldenumi}
\newcounter{thmenumerate}
\newenvironment{thmenumerate}
{\setcounter{thmenumerate}{0}%
 \def\item{\par% \ifnum\thethmenumerate=0\else\par\fi %\noindent\fi
 \refstepcounter{thmenumerate}\textup{(\roman{thmenumerate})\enspace}}
}
{}
\newcounter{xenumerate}   %no left indentation; thus wider lines
\newcommand\pfitemx[1]{\par#1:}
\newcommand\pfitemref[1]{\pfitemx{\ref{#1}}}
\newcounter{steps}
\newcommand\stepx{\smallskip\noindent\refstepcounter{steps}%
 \emph{Step \arabic{steps}: }\noindent}
\newcommand{\refT}[1]{Theorem~\ref{#1}}
\newcommand{\refTs}[1]{Theorems~\ref{#1}}
\newcommand{\refL}[1]{Lemma~\ref{#1}}
\newcommand{\refR}[1]{Remark~\ref{#1}}
\newcommand{\refS}[1]{Section~\ref{#1}}
\newcommand{\refSs}[1]{Sections~\ref{#1}}
\newcommand{\refSS}[1]{Section~\ref{#1}}
\newcommand{\refE}[1]{Example~\ref{#1}}
\newcommand\REM[1]{{\raggedright\texttt{[#1]}\par\marginal{XXX}}}
\newcommand\XREM[1]{\relax}
\xdef\klockan{\the\count1.0\the\count255}
\xdef\klockan{\the\count1.\the\count255}\fi
\newcommand{\sumk}{\sum_{k=1}^\infty}
\newcommand{\sumkn}{\sum_{k=1}^n}
\newcommand\set[1]{\ensuremath{\{#1\}}}
\newcommand\bigset[1]{\ensuremath{\bigl\{#1\bigr\}}}
\newcommand\xpar[1]{(#1)}
\newcommand\bigpar[1]{\bigl(#1\bigr)}
\newcommand\Bigpar[1]{\Bigl(#1\Bigr)}
\newcommand\bigsqpar[1]{\bigl[#1\bigr]}
\newcommand\bigabs[1]{\bigl|#1\bigr|}
\newcommand\Bigabs[1]{\Bigl|#1\Bigr|}
\newcommand\biggabs[1]{\biggl|#1\biggr|}
\newcommand\lrabs[1]{\left|#1\right|}
\def\rompar(#1){\textup(#1\textup)}    % usage: \rompar(...)
\newcommand\xfrac[2]{#1/#2}
\newcommand\Bigparfrac[2]{\Bigpar{\frac{#1}{#2}}}
\def\xexp(#1){e^{#1}}
\newcommand\ntoo{\ensuremath{{n\to\infty}}}
\newcommand\ttoo{\ensuremath{{t\to\infty}}}
\newcommand\norm[1]{\|#1\|}
\newcommand\punkt{.\spacefactor=1000}    % om problem!
\newcommand\iid{i.i.d\punkt}    
\newcommand\ie{i.e\punkt}
\newcommand\eg{e.g\punkt}
\newcommand\cf{cf\punkt}
\newcommand{\as}{a.s\punkt}
\newcommand{\aex}{a.e\punkt}
\newcommand\ii{\mathrm{i}}
\newcommand{\tend}{\longrightarrow}
\newcommand\dto{\overset{\mathrm{d}}{\tend}}
\newcommand\pto{\overset{\mathrm{p}}{\tend}}
\newcommand\asto{\overset{\mathrm{a.s.}}{\tend}}
\newcommand\eqd{\overset{\mathrm{d}}{=}}
\newcommand\bbR{\mathbb R}
\newcommand\bbC{\mathbb C}
\newcommand\bbN{\mathbb N}
\newcounter{MM}
\newcounter{CC}
\newcounter{cc}
\renewcommand\Re{\operatorname{Re}}
\newcommand\E{\operatorname{\mathbb E{}}}
\renewcommand\P{\operatorname{\mathbb P{}}}
\newcommand\Var{\operatorname{Var}}
\newcommand\Exp{\operatorname{Exp}}
\newcommand\Po{\operatorname{Po}}
\newcommand\ga{\alpha}
\newcommand\gd{\delta}
\newcommand\gD{\Delta}
\newcommand\gf{\varphi}
\newcommand\gG{\Gamma}
\newcommand\gL{\Lambda}
\newcommand\go{\omega}
\newcommand\gs{\sigma}
\newcommand\gss{\sigma^2}
\newcommand\eps{\varepsilon}
\renewcommand\phi{\xxx}  %% WARNING
\newcommand\cF{\mathcal F}
\newcommand\cL{{\mathcal L}}
\newcommand\cM{\mathcal M}
\newcommand\cP{\mathcal P}
\newcommand\cR{{\mathcal R}}
\newcommand\cS{{\mathcal S}}
\newcommand\cT{{\mathcal T}}
\newcommand\qw{^{-1}}
\newcommand\qq{^{1/2}}
\newcommand\oi{\ensuremath{[0,1]}}
\newcommand\dd{\,\mathrm{d}}
\newcommand{\chf}{characteristic function}
\newcommand\lhs{left-hand side}
\newcommand\rhs{right-hand side}
\newcommand\bmu{\tmu}
\newcommand\hbmu{\widehat{\bmu}}
\newcommand\hbmuit{\widehat{\bmui_t}}
\newcommand\hbmuet{\widehat{\bmue_t}}
\newcommand\Thetaab{\Theta_{a,b}}
\newcommand\Thetaanbn{\Theta_{a_n,b_n}}
\newcommand\ddt{\frac{\partial}{\partial t}}
\newcommand\dds{\frac{\partial}{\partial s}}
\newcommand\gsf{$\gs$-field}
\newcommand\RNP{Radon--Nikod\'ym property}
\newcommand\WW{W_1^2}
\newcommand\WWk{W_k^2}
\newcommand\WWJ{W_1^2(J)}
\newcommand\normWWJ[1]{\norm{#1}_{\WWJ}}
\newcommand\normLLJ[1]{\norm{#1}_{L^2(J)}}
\newcommand\JO{J^\circ}
\newcommand\rrt{random recursive tree}
\newcommand\wrrt{weighted random recursive tree}
\newcommand\bst{binary search tree}
\newcommand\vecs{s}
\newcommand\ctt{\cT_t}
\newcommand\bm{{m}}
\newcommand\tr{^{\mathsf{t}}}
\newcommand\ti{T^\mathsf{i}}
\newcommand\ww{\omega}
\newcommand\subL{_{\mathsf L}}
\newcommand\subR{_{\mathsf R}}
\newcommand\vl{{v\subL}}
\newcommand\vr{{v_{\mathsf R}}}
\newcommand\gfl{\gf\subL}
\newcommand\gfr{\gf\subR}
\newcommand\etal{\eta\subL}
\newcommand\etar{\eta\subR}
\newcommand\etax{\eta^*}
\newcommand\gfx{\gf^*}
\newcommand\tgf{\tilde\gf}
\newcommand\nux{\nu^*}
\newcommand\supi{^{\mathsf i}}
\newcommand\supe{^{\mathsf e}}
\newcommand\Ti{T\supi}
\newcommand\bmui{\bmu\supi}
\newcommand\Te{T\supe}
\newcommand\bmue{\bmu\supe}
\newcommand\Fi{F\supi}
\newcommand\ctti{\ctt\supi}
\newcommand\Fe{F\supe}
\newcommand\ctte{\ctt\supe}
\newcommand\tqq{\sqrt{t}}
\newcommand\gsx{\bar\gs}
\newcommand\cMx{\cM_*}
\newcommand\tmu{\widetilde \mu}
\newcommand\hmu{\widehat \mu}
\newcommand\mux{\overline{\mu}}
\newcommand\tmux{\widetilde\mux}
\newcommand\rr{\mathbf{r}}
\newcommand\hX{\widehat X}
\newcommand\ZZ{\gD Z}
\newcommand\san{\Bigparfrac{s}{a_n}}
\newcommand\xsan{\xpar{\xfrac{s}{a_n}}}
\newcommand\SRW{SBARW}
\newcommand\DRW{DARW}
\newcommand\PU{\Polya{} urn}
\newcommand\SRWPU{\SRW{} \PU}
\newcommand\DRWPU{\DRW{} \PU}
\newcommand{\Polya}{P\'olya}
\newcommand\CS{Cauchy--Schwarz}
\newcommand\CSineq{\CS{} inequality}
\begin{document}

\begin{abstract} 
 We consider P\'olya urns with infinitely many colours that are of a random
 walk type,
in two related version. We show that the colour distribution a.s., after
rescaling, converges to a normal distribution, assuming only second moments
on the offset distribution. This improves results by
Bandyopadhyay and Thacker (2014--2017; convergence in probability), and
Mailler and Marckert (2017; a.s.\ convergence assuming exponential moment).
\end{abstract}

\maketitle

\section{Introduction}\label{S:intro}

\Polya{} urns with an infinite set $\cS$ of possible colurs of the balls
have been studied by
\citet{BTrate, BTinfty,BTnew} and \citet{MM}.
We consider here two special types of \Polya{} urns with the colour space
$\cS=\bbR^d$ ($d\ge1$), which 
both are associated with random walks on $\bbR^d$.
To distinguish them, we call them
%\Polya{} urns of 
\emph{single ball addition random walk (\SRW) \PU{s}} %type 
and
%\Polya{} urns of 
\emph{deterministic addition random walk (\DRW) \PU{s}}.
The \DRW{} type is the \Polya{} urn considered in \cite{BTrate, BTinfty},
and it is 
included among the more general urns in \cite{BTnew,MM} and studied further
there. 
The \SRWPU{} 
differs from the urns considered in \cite{BTrate, BTinfty,BTnew,MM}
by having random replacements, but it is closely related to the \DRW{} type.
We begin by giving a brief definition of an \SRWPU{},
and refer to \refS{SPolya} for the \DRW{} type and for 
further details, 
including the connection between the two models,
as well as a definition of more general \Polya{} urns.

For simplicity, we first consider an important special case of 
an \SRWPU. 
In this case, the urn contains a (finite) number of balls, each labelled
with a vector $X_i\in\bbR^d$, and starts at time 0
with a single ball labelled with 0.
Furthermore,
the urn evolves by drawing a ball
uniformly at random from the urn, noting its label, $\hX_n$ say,
and replacing it together with a new ball which is
labelled with $X_{n+1}:=\hX_n+\eta_n$, where $\eta_n$ are \iid{} random
variables with some given distribution in $\bbR^d$.
At time $n\ge0$, this urn contains $n+1$ balls. We describe the composition
(= state) of the urn by the measure  (on $\bbR^d$)
\begin{equation}\label{urn0}
  \mu_n:=\sum_{i=0}^n \gd_{X_i},
\end{equation}
where $\gd_x$ (the Dirac delta) denotes a point mass at $x$, and 
$X_0,\dots,X_n$ are the balls in the urn.

The general \SRW{} \Polya{} urn is
an extension of this model; the evolution proceeds in the same way, 
but the initial number of balls 
may be different from 1; in fact, it may be any real number $\rho>0$, 
and the initial labels may be described by an arbitrary 
measure $\mu_0$ on $\bbR^d$ with $\mu_0(\bbR^d)=\rho\in(0,\infty)$,
see  \refS{SPolya} for details.
To draw a ball from an urn with composition $\mu_n$ means that we pick a
colour with the normalized distribution $\tmu_n$, where we for any non-zero
finite measure $\mu$ on a space $\cS$ define its normalization by
\begin{equation}\label{tmu}
  \tmu:=\mu(\cS)\qw\mu
. \end{equation}
In the case above, where \eqref{urn0} holds, $\tmu_n$ is the
empirical distribution of the sequence of colours $X_0,\dots,X_n$.

We assume that the offsets $\eta_n$ have a finite second moment, \ie,
$\E|\eta|^2<\infty$. (We use $\eta$ to denote a generic offset $\eta_n$.)
Then \citet{BTrate, BTinfty,BTnew}
and \citet{MM} proved,
for the \DRW{} model, 
that the normalized compositions 
$\tmu_n$ are
asymptotically normal. To state this formally, we rescale the distributions,
using the following notation from \cite{MM}.
Let $\cP(\bbR^d)$ be the space of Borel probability measures on $\bbR^d$.
If $a>0$ and $b\in\bbR^d$, let $\Thetaab:\cP(\bbR^d)\to\cP(\bbR^d)$ be the 
rescaling mapping defined by:
\begin{equation}\label{Thetaab0}
  \text{if}\quad X\sim \mu\in\cP(\bbR^d),
\qquad\text{then}\qquad
\frac{X-b}{a}\sim \Thetaab\xpar{\mu}.
\end{equation}
Note that if $\mu_n$ is given by \eqref{urn0}, 
and thus $\tmu_n=\frac{1}{n+1}\sum_0^n \gd_{X_i}$,
then 
rescaling $\tmu_n$ by $\Thetaab$ is the same as rescaling all $X_i$ in the
natural way:
\begin{equation}\label{Thetabmu}
  \Thetaab(\tmu_n) = 
\frac{1}{n+1}\sum_{i=0}^n\gd_{(X_i-b)/a}.
\end{equation}

We regard all vectors as column vectors.
It is proved in \cite{BTrate, BTinfty,BTnew,MM} that, for 
\DRWPU{s},
if $\bm:=\E\eta$, then
\begin{equation}\label{urnpto}
  \Theta_{\sqrt{\log n},\,\bm\log n}(\tmu_n) \pto N\bigpar{0,\E[\eta\eta\tr]},
\end{equation}
with convergence in $\cP(\bbR^d)$
%the space of probability measures on $\bbR^d$, 
with the usual weak topology;
furthermore \citet{MM} showed that the convergence in \eqref{urnpto} hold
\as{} if   $|\eta|$ has a finite exponential moment.
One of the main purposes of the present paper is to show that \as{}
convergence 
always holds, 
assuming only a second moment,
for both  types of \Polya{} urns associated with random walks considered here.

\begin{theorem}\label{TP}
  Consider an \SRWPU, with 
\iid{} offsets $\eta_n\in\bbR^d$ and 
an initial composition $\mu_0$ that is 
an arbitrary  non-zero finite measure on $\bbR^d$.
Assume that $\E|\eta|^2<\infty$ and let $m:=\E\eta$. 
Then, as \ntoo, %the rescaled empirical distributions 
\begin{equation}\label{tp}
  \Theta_{\sqrt{\log n},\,\bm\log n}(\tmu_n) \asto N\bigpar{0,\E[\eta\eta\tr]},
\end{equation}
in $\cP(\bbR^d)$ with the usual weak topology.

The same result holds also for  \DRWPU{s}.
\end{theorem}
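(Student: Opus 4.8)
We shall prove the result for \SRWPU{s}; the \DRWPU{} case then follows from the close relationship between the two models explained in \refS{SPolya}, or by a parallel argument. Write $a_n:=\sqrt{\log n}$, $b_n:=m\log n$, $\gS:=\E[\eta\eta\tr]$, and let $\psi(s):=\E e^{\ii\innprod{s,\eta}}$ be the characteristic function of a generic offset. We first treat the basic case $\mu_0=\gd_0$, so that by \eqref{Thetabmu}, $\Thetaanbn(\tmu_n)=\frac1{n+1}\sum_{i=0}^n\gd_{(X_i-b_n)/a_n}$; recall also that $X_i$ is a sum of $D_i$ \iid{} offsets along the ancestral chain of ball $i$, where $D_i$ has the depth distribution of a vertex in a random recursive tree, so $\E D_i^2=O((\log i)^2)$ and hence $\E|X_i|^2=O((\log i)^2)$. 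Since weak convergence on $\cP(\bbR^d)$ is determined by a countable family of bounded test functions and the limit $N(0,\gS)$ is a fixed probability measure, it suffices to prove that, almost surely, for every $t$ in a fixed countable dense subset of $\bbR^d$,
\begin{equation}\label{plancf}
  G_n(t):=\innprod{e^{\ii\innprod{t,\cdot}},\Thetaanbn(\tmu_n)}
   =\frac1{n+1}\sum_{i=0}^n e^{\ii\innprod{t,(X_i-b_n)/a_n}}
   \longrightarrow e^{-\frac12 t\tr\gS t}
  \qquad\asntoo .
\end{equation}

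The engine of the proof is a family of martingales. For fixed $s\in\bbR^d$ put $Z_n(s):=\sum_{i=0}^n e^{\ii\innprod{s,X_i}}=\innprod{e^{\ii\innprod{s,\cdot}},\mu_n}$. Conditioning on the ball drawn at step $n+1$ gives $\E\bigsqpar{Z_{n+1}(s)\mid\cF_n}=\frac{n+1+\psi(s)}{n+1}Z_n(s)$, so with $P_n(s):=\prod_{k=1}^n\frac{k+\psi(s)}{k}\sim n^{\psi(s)}/\gG(1+\psi(s))$ the process $M_n(s):=Z_n(s)/P_n(s)$ is a complex martingale with $\E M_n(s)=M_0(s)=1$ (since $X_0=0$). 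Since $\E|\eta|^2<\infty$ we have $\psi(s)=1+\ii\innprod{m,s}-\half s\tr\gS s+o(|s|^2)$ as $s\to0$; inserting this into the asymptotics of $P_n$ gives $n^{\psi(t/a_n)}=n\,e^{\ii a_n\innprod{m,t}}e^{-\frac12 t\tr\gS t}(1+o(1))$ and $\gG(1+\psi(t/a_n))\to1$, hence the \emph{deterministic} identity
\begin{equation}\label{planid}
  G_n(t)=M_n\bigpar{t/a_n}\,e^{-\frac12 t\tr\gS t}\bigpar{1+o(1)} ,
\end{equation}
so that \eqref{plancf} reduces to proving $M_n(t/a_n)\asto1$.

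For this we need some second-moment estimates. A routine recursion for $\E|Z_n(s)|^2$ (which uses only $\E|\eta|^2<\infty$) shows $\E|M_n(s)|^2=O(1)$ uniformly for $s$ in a neighbourhood of $0$, so for each fixed $s$, $M_n(s)\to M_\infty(s)$ \as{} and in $L^2$, and one checks $\E|M_\infty(s)|^2\to1$ as $s\to0$, whence $M_\infty(s)\to1$ in $L^2$. The same bookkeeping along the diagonal gives $\E\bigabs{M_n(t/a_n)-1}^2\to0$, which already re-proves --- under only a second moment --- the convergence in probability established in \cite{BTrate,BTinfty,BTnew,MM}. To upgrade to \as{} convergence we use three further consequences of $\E|\eta|^2<\infty$: (a) the increment bound $\E\bigsqpar{|M_{n+1}(s)-M_n(s)|^2\mid\cF_n}\le|P_n(s)|^{-2}|1+\psi(s)/(n+1)|^{-2}$, which along the diagonal is $O(n^{-2})$, so (Doob) $\E\sup_{m\ge n}|M_m(s_n)-M_n(s_n)|^2=O(n^{-1})$ and $\E|M_\infty(s_n)-M_n(s_n)|^2=O(n^{-1})$, writing $s_n:=t/a_n$; (b) a mean-square modulus of continuity $\E|M_\infty(s)-M_\infty(s')|^2\le C|s-s'|^2$ near $0$, obtained by differentiating in $s$ and using $\E|X_i|^2=O((\log i)^2)$; (c) a crude almost-sure bound $\sum_{i=0}^n|X_i|=O(n(\log n)^3)$, which gives $\sup_s|\nabla_s M_n(s)|=O((\log n)^3)$ on a shrinking neighbourhood of the diagonal. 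Now take $n_j:=j^2$. Then $|s_{n_{j+1}}-s_{n_j}|=O(j\qw(\log j)^{-3/2})$ is summable, so by (b) the series $\sum_j|M_\infty(s_{n_j})-M_\infty(s_{n_{j+1}})|$ has finite mean, hence converges \as; thus $M_\infty(s_{n_j})$ converges \as, and as it also tends to $1$ in $L^2$ its limit is $1$. Together with (a) (so $M_{n_j}(s_{n_j})\to1$ \as{} and $\sup_{n_j\le m<n_{j+1}}|M_m(s_{n_j})-M_{n_j}(s_{n_j})|\to0$ \as) and (c) (so for $n_j\le m<n_{j+1}$, $|M_m(s_m)-M_m(s_{n_j})|\le|s_m-s_{n_j}|\sup_s|\nabla_sM_m(s)|=O(j\qw(\log j)^{3/2})\to0$ \as), this yields $M_m(s_m)\asto1$, which is \eqref{plancf}. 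A general non-zero finite $\mu_0$ requires only extra bookkeeping of the initial masses and labels --- which produce an $o(1)$ shift after rescaling --- and the \DRWPU{} case is entirely analogous.

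The main obstacle --- and the point where \cite{MM} had to assume a finite exponential moment (through concentration) --- is this last step: the martingales $M_n$ must be read along the \emph{moving} argument $s=t/a_n$, and showing that they nevertheless converge \as{} with only $\E|\eta|^2<\infty$ rests on the mean-square continuity estimate~(b) for $M_\infty$ near $0$ and on controlling, block by block along $n_j=j^2$, the parameter variation $|M_m(s_m)-M_m(s_{n_j})|$ with no more than a second moment available for $\eta$. The remaining genuinely technical matter is making the reduction \eqref{planid} --- and its analogue for a general $\mu_0$ --- uniform in $n$ under this minimal assumption.
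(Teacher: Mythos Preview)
Your martingale set-up and the reduction \eqref{planid} to $M_n(t/a_n)\asto1$ coincide with the paper's strategy. The difference lies in how the moving argument is handled. The paper shows that $(M_n)_n$ is an $L^2$-bounded martingale \emph{in the Sobolev space} $W^2_1(J)$ for a small interval $J\ni0$; by the Radon--Nikod\'ym property this gives $M_n\to M_\infty$ a.s.\ in $W^2_1(J)$, hence, via the embedding $W^2_1(J)\hookrightarrow C(J)$, uniformly on $J$, and $M_n(s_n)\to M_\infty(0)=1$ drops out. The only moment input is the uniform bound $\sup_{s\in J,\,n}\E|\partial_s M_n(s)|^2<\infty$, which follows from $\E|\eta|^2<\infty$ by an explicit second-moment computation.

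Your subsequence-plus-block scheme is a reasonable alternative, and steps~(a) and~(b) are sound (your~(b) is essentially the paper's Sobolev estimate transported to $M_\infty$). But step~(c) has a real gap: the ``crude almost-sure bound'' $\sum_{i\le n}|X_i|=O\bigl(n(\log n)^3\bigr)$ is not crude under only $\E|\eta|^2<\infty$. From $\E|X_i|^2=O((\log i)^2)$ you get at best $\E\bigl(\sum_{i\le n}|X_i|\bigr)^2=O\bigl((n\log n)^2\bigr)$, whence $\P\bigl(\sum_{i\le n}|X_i|>n(\log n)^K\bigr)=O\bigl((\log n)^{2-2K}\bigr)$, which is not summable over $n$ nor over $n_j=j^2$ for any $K$. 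Pathwise arguments via the height of the \rrt{} fare no better, since with only a second moment one merely has $\max_{j\le n}|\eta_j|=o(\sqrt n)$ a.s. A repair is available: replace the pathwise bound on $\nabla_s M_m$ by the $L^2$ bound $\sup_s\E|\partial_s M_m(s)|^2=O(1)$, so that $\E|M_m(s_m)-M_m(s_{n_j})|^2=O\bigl(|s_m-s_{n_j}|^2\bigr)=O\bigl(j^{-2}(\log j)^{-3}\bigr)$, and then union-bound over the $O(j)$ values of $m$ in the block; the resulting series $\sum_j j\cdot j^{-2}(\log j)^{-3}$ converges and Borel--Cantelli applies. But note that this repair is exactly the paper's Sobolev estimate, so the two routes merge.

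Finally, the passage to a general $\mu_0$ and to the \DRWPU{} is not just bookkeeping. The paper treats it by a separate coupling lemma (Lemma~\ref{LP}): two \DRW{} urns with the same offset law and equal total initial mass are coupled on the same weighted \rrt, and the difference of their characteristic functions at $s_n\to0$ is controlled via the relative subtree sizes at the daughters of the root, which form a Chinese restaurant process with a.s.\ limiting frequencies. Your sketch would need a comparable device.
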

Note that the \rhs{} in \eqref{tp}
is non-random; it is a fixed
distribution in $\cP(\bbR^d)$.
Note also that the variance in the limit in \eqref{tp} is $\E[\eta\eta\tr]$
and not $\Var[\eta]=\E[\eta\eta\tr]-mm\tr$, \cf{} \refE{Edepth}.

\begin{remark}\label{Rinspect}
If we inspect the urn 
by drawing a ball at random from the urn at time $n$
(without interfering with the urn process), 
and let $X^*_n$ be is its
  colour, then, conditionally on the urn composition $\mu_n$,  
the distribution of $X^*_n$ is $\tmu_n$. Hence, 
recalling \eqref{Thetaab0},
\eqref{tp} can also be
written as
\begin{equation}\label{tp2}
  \cL\Bigpar{\frac{X^*_{n}-\bm\log n}{\sqrt{\log n}}\Bigm| \mu_n}
\asto 
N\bigpar{0,\E[\eta\eta\tr]}
\end{equation}
in $\cP(\bbR^d)$. % with the usual weak topology.
We can also rewrite \eqref{tp2} as a conditional convergence in distribution:
\begin{equation}\label{tp3}
\text{Conditioned on $(\mu_n)_1^\infty$, a.s.,}\quad
  \frac{X^*_{n}-\bm\log n}{\sqrt{\log n}}
\dto 
N\bigpar{0,\E[\eta\eta\tr]}.
\end{equation}
\end{remark}

\begin{remark}\label{Rannealed}
By unconditioning in \eqref{tp3}, it follows that
  \begin{equation}\label{tp3a}
  \frac{X^*_{n}-\bm\log n}{\sqrt{\log n}}
\dto 
N\bigpar{0,\E[\eta\eta\tr]}
%=N\bigpar{0,\gS+\bm\bm\tr}
.
\end{equation}
This is a much simpler result, which \eg{} easily
follows from the correspondence with \rrt{s} used below and
the asymptotic normal distribution of the depth of a random node in a
\rrt, see \cite[Theorem 6.17]{Drmota}, together with
the usual central limit theorem for \iid{} variables.

In the language of statistical physics, 
we study in \eqref{tp2}--\eqref{tp3} the \emph{quenched}
  version of the problem, where we fix a realization of the 
urn process $(\mu_n)$,
and then consider the random variable
  $X^*_n$,
obtaining results for \aex{} realization of the urn process.
The corresponding \emph{annealed} version, where we just consider $X^*_n$ as a
random variable obtained by randomly constructing the urn $\mu_n$ and
choosing a ball $X^*_n$ in it,
is the much simpler \eqref{tp3a}. Note that 
the distribution of $X^*_n$ in
the annealed version is $\E\tmu_n$, the expectation of the random measure
$\tmu_n$ defined above. Hence, \eqref{tp3a} can be written as
\begin{equation}\label{tp1a}
  \Theta_{\sqrt{\log n},\,\bm\log n}(\E\tmu_n) \to
N\bigpar{0,\E[\eta\eta\tr]}
%=N\bigpar{0,\gS+\bm\bm\tr}
.
\end{equation}
We  can  regarded \eqref{tp3a}--\eqref{tp1a}
as the annealed version of \refT{TP}.
Similar unconditioning to annealed versions can be done in 
the theorems for random trees in Sections \ref{Stree2} and \ref{Sbinary}.
\end{remark}

The proofs by \cite{BTrate, BTinfty,BTnew} and \cite{MM}
are based on a connection
between \Polya{} urns and the \rrt, see \refS{Skorre}.
We do the same in the present paper; we
also introduce a weighted modification of the \rrt{} in order to treat
\Polya{} urns with an arbitrary inital configuration, see \refSS{SSrrt}.
The \SRWPU{s} correspond to branching
random walks on the (weighted) \rrt, and thus \refT{TP} is equivalent to
\as{} convergence of the empirical distribution, suitably normalized, for a
branching random walk on a (weighted) \rrt. Furthermore, as is also
well-known, the \rrt{} can be embedded in the continuous time Yule tree,
and thus the result can be interpreted as \as{} convergence of the
normalized empirical distribution of a branching random walk on a Yule tree.
(This extends to the weighted \rrt{} and a weighted Yule tree defined in
\refSS{SSYule}.) Such \as{} convergence results for branching processes have
been shown, in much greater generality, 
by \eg{} \citet[Theorem 4]{Uchiyama},
and thus \refT{TP} essentially
follows from known results in branching process theory.

One purpose of the present paper is to make this connection explicit, by
stating results for branching random walks on \rrt{s} and Yule trees in a
form corresponding to the \Polya{} urn theorem above, 
including the weighted cases.
We prove these results for random trees
using the standard method of showing convergence of a
suitable martingale of %continuous 
functions, used also by \citet{Uchiyama}, 
\citet{Biggins-uniform1d,Biggins-uniform}
and others.
(For this, we use a Sobolev space of functions; see \refR{Rspaces}.)
We give complete proofs, % of these results
both for completeness and
because we want to show how the proofs work in this simple case
where we can give explicit expressions instead of estimates, and
without the distractions caused by the greater generality in
\cite{Uchiyama}, and also because we have not been able to find published
results with precisely the formulations used here, 
including the weighted case.
Furthermore, we give proofs with explicit calculations both for the
(weighted) Yule trees and the \rrt{s}; as said above, the results for these
trees are equivalent, so it suffices to prove one of the
cases. Nevertheless, it is possible to prove the result directly, with explicit
calculations, for both cases, 
and we find it interesting and instructive to do so
and see the similarities and differences between the two cases.

The \PU{s}, random trees and branching random walks that we consider are
defined in Sections \ref{SPolya}--\ref{Sbranch}, and the connection between
them is given in \refS{Skorre}.
The results for \rrt{s} and Yule trees are stated in \refS{Stree2} and
proved in Sections \ref{SpfTYule}--\ref{SpfTrrt}. \refT{TP} above is proved
in \refS{SpfTP}.
\refS{Sbinary} gives analoguous results for \bst{s} and binary Yule trees.
\refS{Sproblem} contains some open problems.

\section{\Polya{} urns}\label{SPolya}

As said in the introduction, we consider a general version of \Polya{} urns,
where we have a measurable space $\cS$ of \emph{colours} (i.e., {types}), 
and the state, or \emph{composition},
of the urn is given by a finite measure $\mu$ on $\cS$.
%(In our case $\cS=\bbR^d$ for some $d\ge1$.)
This version of \Polya\ urns has been introduced in a special case by
\citet{BlackwellMcQ}, 
and in general 
by \citet{BTnew}, see also \cite{BTrate,BTinfty}, and by \citet{MM}. 
Although our main theorem is only for \Polya{} urns of
the special types associated with random walks,
we give the definition of \PU{s} in a general form as in 
\cite{BTnew,BTrate,BTinfty,MM}.
Furthermore, we allow also random replacements, see also \cite{SJ327}.

The interpretation of the measure
$\mu$ describing the state of the
urn is that if $A\subseteq\cS$, then $\mu(A)$ is the
total mass of the colours in $A$. 
The classical case with a finite number of
discrete balls  of assorted colours, 
can be treated by representing
each ball of colour $x$ 
by a point mass $\gd_x$;  
in other words,
if the urn contains $m$ balls with colours
$x_1,\dots,x_m$, then it is represented by the measure
\begin{equation}\label{urn1}
  \mu=\sum_i \gd_{x_i}, 
\end{equation}
and thus $\mu$ is a discrete
measure where $\mu\set x$ is the number of balls of colour $x$.
It has often been remarked that the classical case easily generalizes to
non-integer ``numbers of balls'' of each colour; in the measure formulation
considered here, this means that 
$\mu$ is an arbitrary discrete finite measure. 
The general measure version is a further generalization, where 
$\cS$ may be infinite and $\mu$ 
may be, e.g., a  continuous measure.

To define a measure-valued \Polya{} urn process,
we assume that we are given a colour space $\cS$ (a measurable space).
We let $\cM(\cS)$ denote the space of finite measures on $\cS$, 
let $\cMx(\cS):=\cM(\cS)\setminus\set0$,
and define for
each $\mu\in\cMx(\cS)$ its normalization $\tmu$ as
as the probability measure \eqref{tmu}.
%\begin{equation}\label{tmu}
%  \tmu:=\mu(\cS)\qw\mu.
%\end{equation}
We assume also that we are given a
\emph{replacement rule}, which may be deterministic or random.
In the deterministic case it is a (measurable) function $x\mapsto R_x$
mapping $\cS$ into $\cM(\cS)$; in other words, $R_x$ is a kernel from $\cS$
to itself \cite[p.~20]{Kallenberg}. 
In the random case, each $R_x$ is a random
element of $\cM(\cS)$; 
formally the replacement rule is a (measurable) mapping $x\mapsto\cR_x$
mapping $\cS$ into
the space $\cP(\cM(\cS))$
of probability measures on $\cM(\cS)$, 
\ie, a probability kernel from $\cS$ to $\cM(\cS)$,
but it is convenient to represent each $\cR_x$ by a random $R_x\in\cM(\cS)$
having distribution $\cR_x$.

The \Polya{} urn starts with a given initial composition $\mu_0\in\cMx(\cS)$.
In each step, we ``draw a ball from the urn''; this means that,
given everything that has happened so far, if the current composition 
of the urn  is
$\mu_n$, then we randomly select a colour $X_n$ with distribution $\tmu_n$.
We then ``return the ball together with the replacement $R_{X_n}$'', which
means that we update the state of the urn to
\begin{equation}\label{update}
  \mu_{n+1}:=\mu_n+R_{X_n}.
\end{equation}
In the case when the replacements $R_x$ is random, \eqref{update} should be
interpreted to mean that given $X_n$ and everything that has happened earlier, 
we take a fresh random $R_{X_n}$ with the distribution $\cR_{X_n}$. Thus,
given $X_n$, $R_{X_n}$ is independent of the history of the process.
(It is shown in \cite{SJ327} that an urn with random replacements is
equivalent to an urn with deterministic replacements on the larger colour
space $\cS\times\oi$; we will not use this.)

The update \eqref{update} is repeated an infinite number of times;
this defines the \Polya{} urn process as a Markov process.
The process is well-defined, with every $\mu_n\in\cMx(\cS)$,
since we have assumed that each $R_x$ is a finite measure.
(Thus $R_x$ is non-negative; there are no subtractions of balls in this
version.)

\begin{remark}
  We use the name ``replacement'' to conform with 
\cite{BTrate, BTinfty,BTnew,MM}, although $R_x$ really is an addition to the
urn rather than a replacement, since we also return the drawn balls.
(The real replacement is $\gd_x+R_x$.) A version with a true replacement,
without replacement of the drawn ball, is studied in \cite{MM}, but will not
be considered here.
\end{remark}

One special case, which we call \emph{single ball addition} 
is when each replacement consists of a single ball of a
random colour (with distribution $\cR_x$ depending on the colour $x$ of the
drawn ball as above). In other words, for each $x\in\cS$, 
$R_x$ is a random measure of the
type $R_x=\gd_{r_x}$ for a random variable $r_x\in\cS$.
In this case, 
let $\rr_x:=\cL(r_x)\in\cP(\cS)$  be the distribution of $r_x$ and
define $\mux_n\in\cM(S)$ as the composition 
$\mu_n\cdot\rr$
of $\mu_n$ and the kernel
$x\mapsto \rr_x$ (from $\cS$ to itself), see \cite[p.~21]{Kallenberg}, i.e.
\begin{equation}\label{mux}
  \mux_n(A):=
\int_{\cS}\rr_x(A)\dd\mu_n(x)
=\int_{\cS}\P(r_x\in A)\dd\mu_n(x).
\end{equation}
Note that the total mass $\mux_n(\cS)=\mu(\cS)$ and that therefore
$\tmux_n=\tmu_n\cdot\rr$. Hence, in the \Polya{} process above, given the
present state $\mu_n$, the distribution of the colour of the next ball added
to the urn is $\tmux_n$. Furthermore, if $Y_n$ is this colour, \ie,
$Y_n:=r_{X_n}\in\cS$, then $\mu_{n+1}=\mu_n+\gd_{Y_n}$, and thus, by \eqref{mux},
\begin{equation}\label{muxupdate}
  \mux_{n+1}
=\mu_{n+1}\cdot\rr
=\mu_n\cdot\rr+\gd_{Y_n}\cdot\rr=\mux_n+\rr_{Y_n}.
\end{equation}
This means that $\mux_n$ also is a \Polya{} urn process, as defined above,
with deterministic
replacements $\rr_x$. 
We state this formally.

\begin{lemma}\label{Lux}
Let $(\mu_n)$ be a single ball addition  \Polya{} urn process, with random
replacements $R_x=\gd_{r_x}$. Then, with $\rr_x:=\cL(r_x)$ and
$\mux_n:=\mu_n\cdot\rr$,
the sequence $(\mux_n)$ is a \Polya{} urn process with deterministic
replacements $\rr_x\in\cP(\cS)$.
\qed
\end{lemma}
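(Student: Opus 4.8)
The plan is to read off from \eqref{mux}--\eqref{muxupdate} that a single step of $(\mux_n)$ is exactly a single step of a \Polya{} urn with the deterministic replacement rule $x\mapsto\rr_x$, and then to descend from the history of $(\mu_k)$ to the history of $(\mux_k)$. Write $\cF_n$ for the $\gs$-field of the history of $(\mu_k)$ up to and including the composition $\mu_n$, so that $\mux_n=\mu_n\cdot\rr$ is $\cF_n$-measurable. I would record three facts. (i) Each $\rr_x$ is a probability measure, so $\mux_n(\cS)=\mu_n(\cS)$; hence $\mux_n\in\cMx(\cS)$, the process never degenerates, and $\tmux_n=\tmu_n\cdot\rr$. (ii) Since $\mu_{n+1}=\mu_n+\gd_{Y_n}$ with $Y_n:=r_{X_n}$, and $\gd_y\cdot\rr=\rr_y$, we get the increment identity $\mux_{n+1}=\mux_n+\rr_{Y_n}$; so the mass added to $\mux$ at step $n$ is the deterministic measure $\rr_{Y_n}$ attached to the colour $Y_n$. (iii) Conditionally on $\cF_n$, $X_n$ has law $\tmu_n$, and given $X_n$ the fresh offset $Y_n=r_{X_n}$ has law $\rr_{X_n}$ and is conditionally independent of $\cF_n$; hence the $\cF_n$-conditional law of $Y_n$ is $\int_\cS\rr_x(\cdot)\dd\tmu_n(x)=\tmu_n\cdot\rr=\tmux_n$.

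Combining (i)--(iii): conditionally on $\cF_n$, the process moves from $\mux_n$ to $\mux_n+\rr_{Y_n}$ with $Y_n\sim\tmux_n$, which is precisely the update rule of a \Polya{} urn with deterministic replacements $\rr_x\in\cP(\cS)$. Since this transition law depends on the past only through $\mux_n$, and $\gs(\mux_0,\dots,\mux_n)\subseteq\cF_n$ (each $\mux_k$ being a measurable function of $\mu_k$), the tower property gives that $(\mux_k)$ obeys the same transition law relative to its own filtration, \ie{} it is a \Polya{} urn process in the sense defined above. The only ingredients calling for comment rather than computation are measurability: that $x\mapsto\rr_x$ is a genuine probability kernel from $\cS$ to $\cS$, and that $\mu\mapsto\mu\cdot\rr$ is a measurable self-map of $\cM(\cS)$ --- routine facts about kernels, \cf{} \cite[p.~20]{Kallenberg}. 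Thus the only (minor) obstacle is this $\gs$-field and measurability bookkeeping; the substance of the lemma is already the pair of identities $\tmux_n=\tmu_n\cdot\rr$ and $\mux_{n+1}=\mux_n+\rr_{Y_n}$ recorded in \eqref{mux}--\eqref{muxupdate}.
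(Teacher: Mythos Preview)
Your proposal is correct and follows essentially the same approach as the paper: the paper's proof is the paragraph preceding the lemma, culminating in \eqref{mux}--\eqref{muxupdate}, and you reproduce exactly those identities as your points (i)--(iii). The only difference is that you make explicit the filtration bookkeeping (conditioning on $\cF_n$ versus the natural filtration of $(\mux_k)$, and the tower property) that the paper leaves implicit when it simply asserts ``This means that $\mux_n$ also is a \Polya{} urn process''; this is a reasonable elaboration but not a different argument.
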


The \Polya{} urns studied in \cite{BTrate, BTinfty,BTnew,MM}
have deterministic replacements that furthermore are 
probability measures; hence these urns are of the type 
\eqref{muxupdate}, 
and \refL{Lux} shows that,
provided the initial value is of the type $\mu_0\cdot\rr$, 
these urns correspond to urns $\mu_n$ with random single ball additions; more
precisely they are given by $\mu_n\cdot\rr$.

\begin{example}
The urns studied in \citet{BlackwellMcQ} have the special form $R_x=\gd_x$;
hence they are single ball addition \Polya{} urns
where the added ball has the same
colour as the drawn one, just as for the original (two-colour)  urns in  
\cite{Markov1917,EggPol1923,Polya1930}. In this case $r_x=x$ and
$\mux_n=\mu_n$, so there is no difference between the two \Polya{} urns in
\refL{Lux}.   
\end{example}

\begin{example}\label{ERW}
The \emph{\SRWPU s} discussed in \refS{S:intro} 
are  a special case of the single
ball addition case,  where $\cS=\bbR^d$ and the replacements are
  translation invariant, i.e., $r_x\eqd x+r_0$ for all $x\in\bbR^d$.
In other words, with $\eta:=r_0$, if we draw a ball of colour $X_n$, it is
replaced together with a ball of colour $X_n+\eta_n$, where $(\eta_n)$ are
independent copies of the random variable $\eta\in\bbR^d$ (with $\eta_n$
independent of $X_n$).

By \refL{Lux}, an \SRWPU{} corresponds to an urn with
colour space $\bbR^d$ and deterministic replacements $\rr_x=\cL(x+\eta)$.
This is the type of urns studied in \cite{BTrate,BTinfty}; they are also
studied in \cite{BTnew,MM} together with more general ones.
We call such urns \emph{\DRWPU{s}}.

Note that for a \DRWPU{}, the translation invariance of $\rr$ shows that the
relation $\mux_n=\mu_n\cdot\rr$ in \refL{Lux}
can be written as a convolution
\begin{equation}\label{*}
  \mux_n = \mu_n * \rr_0 = \mu_n*\nu.
\end{equation}
%
%If we let $\tau_x:\cP(\bbR^d)\to\cP(\bbR^d)$ denote the translation mapping
%defined by $\tau_x:=\Theta_{1,-x}$, see \eqref{Thetaab0}, then we have in
%this case, recalling $\nu=\cL(\eta)$,
%\begin{equation}\label{taux}
%  \rr_x=\cL(x+\eta)=\tau_x(\nu).
%\end{equation}
\end{example}

\section{Random trees}\label{Stree1}

The random trees that we study are (mostly) well-known; see for example
\cite{Drmota} and \cite{AN,Yule}.
For convenience, we collect their definitions here.

The trees that we are interested in grow (randomly) in either discrete or
continuous time; 
we thus consider either an increasing sequence of random
trees $T_n$ with an integer parameter $n\ge0$,
or an increasing family $\ctt$ of random trees with
a real parameter $t\ge0$. 
(We use different fonts for the two cases; this will be convenient
to distinguish them in \eg{} the proof of \refT{Trrt} where we consider
trees of both types simultaneously, but has otherwise no significance.) 

For a tree $T$, we let $|T|$ denote its number of nodes; however, when we
consider weighted trees, we instead let $|T|$ denote the total weight, i.e.,
the sum of the weights of the nodes.

\subsection{Trees growing in discrete time}\label{SSrrt}

The \emph{random recursive tree} $T_n$ is constructed recursively. $T_0$ is
just a root. Given $T_n$, we obtain $T_{n+1}$ by adding a new node and
choosing its parent uniformly at random from the already existing nodes.
(We have chosen a notation where $T_n$ has $n+1$ nodes; this is of course
irrelevant for our asymptotic results.)

We consider also a generalization of the \rrt{} that we call 
a \emph{\wrrt}; this is characterized by a parameter $\rho>0$ (the
\emph{weight}). The definition is as for the \rrt, but we give the 
root  weight $\rho$ and every other node weight 1, and when we add a node,
its parent is chosen with probability proportional to its weight.
In other words, when adding a new node to $T_n$, its parent is chosen to be the
root $o$ with probability $\rho/(n+\rho)$, and  to be  $v$
with probability $1/(n+\rho)$, for each of the $n$  existing nodes $v\neq o$.
Note that taking the weight $\rho=1$ gives the  \rrt.

The \emph{\bst} is defined by a similar recursive procedure, but we now
have two types of nodes, \emph{internal} and \emph{external}.
$T_0$ consists of a single external node (the root).
The tree evolves by choosing an external node uniformly at random, and then
converting it to an internal node and adding two new external nodes as
children to it. (One child is labelled left and the other right.) 
Thus $T_n$ has $n$ internal nodes and $n+1$ external nodes; an internal node
has 2 children, and an external node has 0.
Depending on the circumstances, one might either be interested in the
complete tree $T_n$ with $2n+1$ nodes, or just the internal subtree $\ti_n$
with the $n$ internal nodes.

\subsection{Trees growing in continuous time}\label{SSYule}

The \emph{Yule tree} $\ctt$, $t\ge0$,
is the family tree of 
the \emph{Yule process},
which is
a simple Markov continuous-time 
branching process
starting with a single node (= individual) at time 0 and such that every
node lives for ever and gets children according to a Poisson process
with intensity 1.

By symmetry and lack of memory, it is obvious that if $\tau_n$ is the
stopping time  
\begin{equation}
  \label{taun}
\tau_n:=\min\set{t:|\ctt|=n+1}, 
\end{equation}
then the sequence $T_n:=\cT_{\tau_n}$
is a sequence of random recursive trees.

Corresponding to the \wrrt{} above, we define also a 
\emph{weighted Yule tree},
where the root (the initial node) has weight $\rho>0$ and every other
node has weight 1, and each node gets children with intensity 
equal to its weight. (Thus, only the initial node is modified.)
For the weighted Yule tree, we  modify \eqref{taun} and  define $\tau_n$ as
the first time that the 
total weight is $n+\rho$; 
then $T_{\tau_n}$ is a \wrrt{} with the same weight $\rho$.
Note that if $\rho$ is an integer, then the weighted Yule tree can be
obtained by taking $\rho$ independent Yule trees and merging their roots.

Many authors use a  different version of the Yule tree, which we
call the \emph{binary Yule tree} to distinguish the two versions.
The difference is that each individual lives a random time with an
exponential distribution $\Exp(1)$ with rate 1, and that each individual
gets 2 children when she dies. (We do not define any weighted version.)
It is obvious that the number of living individuals follows the same
branching process (the {Yule process}) for both versions, but that the
trees $\ctt$, which contain both the living and dead individuals, will be
different. 
In fact, if we now let $\tau_n$ be the first time that the tree has $n$ dead
individuals, and thus $n+1$ living ones, it is easy to see, again
because of the lack of memory, that the sequence $T_{\tau_n}$ defines a
\bst,
where the dead individuals are internal nodes and the living individuals are
external nodes.

We shall use some simple facts from branching process theory.

First, it is a well-known fact \cite[Theorems III.7.1--2]{AN}
that
for the Yule tree 
(and much more generally), 
\begin{equation}\label{winston}
  |\ctt|/e^t \asto W>0
\end{equation}
for some random variable $W$. (In fact, for the Yule tree,
$W\sim\Exp(1)$, but we do not need this.)
%$e^{-t}|\ctt|$ \as{} converges to a limit that is strictly positive, 
%see \eg{} 
% (In fact, for the Yule tree, the limit has an exponential distribution.)
%
For the weighted Yule tree, every child of the root starts an independent
Yule tree, and it follows easily that \eqref{winston} holds in this case too.
(Furthermore, $W$ then has the Gamma distribution  $W\sim\Gamma(\rho)$.)

Taking $t=\tau_n$ in \eqref{winston} yields, 
for a general weight $\rho$,
\begin{equation}
  \frac{n+\rho}{e^{\tau_n}}\asto W
\end{equation}
and thus, \as,
\begin{equation}\label{tauas}
  \tau_n = \log(n+\rho) - \log W + o(1) %= \log n -\log W + o(1) 
= \log n + O(1).
\end{equation}
(Here and below, the implicit constant in $O(1)$ may be random.)

We note a standard fact.

\begin{lemma} \label{L3a}
If\/ $\ctt$ is the weighted Yule tree, for any $\rho>0$,
then for every $t<\infty$ and $r<\infty$,
$\E|\ctt|^r<\infty$.
\end{lemma}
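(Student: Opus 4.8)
The plan is to bound the $r$-th moment of $|\ctt|$ by comparing the weighted Yule tree with an ordinary (unweighted) Yule tree and reducing everything to the known fact that the Yule process has a geometric marginal. First recall that for the ordinary Yule tree, $|\ctt|$ is the value at time $t$ of a pure birth process started from $1$, and it is classical that $|\ctt|\sim\mathrm{Ge}(e^{-t})$, i.e.\ $\P(|\ctt|=k)=e^{-t}(1-e^{-t})^{k-1}$ for $k\ge1$; in particular all moments are finite, with $\E|\ctt|^r<\infty$ for every fixed $t<\infty$ and $r<\infty$. (One can see the moment finiteness even without the exact distribution: $\E|\ctt| = e^t$ by a direct generator computation, and more generally the factorial moments satisfy a simple linear ODE that is solved by polynomials in $e^t$, so each is finite.)

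For the weighted Yule tree with weight $\rho>0$, I would argue in two steps. If $\rho$ is a positive integer, then as noted in \refSS{SSYule} the weighted tree is obtained by merging the roots of $\rho$ independent ordinary Yule trees, so $|\ctt|$ is (up to the constant $\rho-1$ coming from identifying roots) a sum of $\rho$ independent copies of $|\ctt|^{\mathrm{Yule}}$; then $\E|\ctt|^r<\infty$ follows from the integer case above together with Minkowski's inequality (or simply the inequality $(x_1+\dots+x_\rho)^r\le \rho^{r-1}(x_1^r+\dots+x_\rho^r)$ for $r\ge1$, plus monotonicity of $L^r$ norms to reduce a general real $r$ to an integer one). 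For general real $\rho>0$, I would use the obvious monotone coupling: a weighted Yule tree with weight $\rho$ can be coupled so that $|\ctt|$ is dominated pathwise by $|\ctt|$ for a weighted Yule tree with weight $\lceil\rho\rceil\in\bbN$, since increasing the root weight only increases the root's birth intensity and hence stochastically speeds up the whole process. (Concretely: run the $\lceil\rho\rceil$ process and, at each root-birth event, keep the new child and its descendants independently with probability $\rho/\lceil\rho\rceil$ at the first event only — or more cleanly, thin the root's Poisson clock from rate $\lceil\rho\rceil$ down to rate $\rho$, discarding the subtrees hanging off the discarded births.) Thus $|\ctt^{(\rho)}|\le_{\mathrm{st}}|\ctt^{(\lceil\rho\rceil)}|$ and the moment bound transfers.

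The main obstacle is really just setting up the domination coupling cleanly for non-integer $\rho$; everything else is routine. An alternative that avoids the coupling entirely: derive a differential inequality directly. Writing $m_r(t):=\E|\ctt|^r$ for the weighted tree, the generator of the process gives $\frac{d}{dt}\E f(|\ctt|) = \E\bigl[(|\ctt|-1+\rho)\,(f(|\ctt|+1)-f(|\ctt|))\bigr]$, since the total birth intensity when the tree has $|\ctt|$ nodes is $(|\ctt|-1)\cdot1+\rho$. Taking $f(x)=x^r$ and using $(x+1)^r-x^r\le C_r(1+x^{r-1})$, one gets $m_r'(t)\le C_r'(1+m_r(t))$ on any interval where $m_r(t)$ is known to be finite; combined with a bootstrapping argument over $r$ (starting from $m_1(t)=e^t\cdot$const, which is an exact identity) and Grönwall, this yields $m_r(t)<\infty$ for all $t<\infty$, for every $r$, by induction on $\lceil r\rceil$. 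I would present whichever of these two routes is shortest in context; given that \eqref{winston} and the merging description are already stated, the coupling route is probably the more economical.
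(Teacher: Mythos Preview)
Your proposal is correct and follows essentially the same route as the paper: reduce general $\rho$ to integer $\rho$ by monotonicity (the coupling you spell out), handle integer $\rho$ by merging independent standard Yule trees, and use the known moment finiteness for the standard Yule process as the base case. The paper's proof is just terser---it cites \cite[Corollary III.6.1]{AN} for the $\rho=1$ case and writes ``by monotonicity (in $\rho$)'' without detailing the coupling---so your primary argument matches; your alternative Gr\"onwall route is valid but unnecessary here.
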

\begin{proof}
It is well-known that $\E|\ctt|^r <\infty$ for
the standard Yule tree with $\rho=1$,
see  \cite[Corollary III.6.1]{AN}.

For general $\rho$, we may by monotonicity (in $\rho$) assume that $\rho$ is
an integer, and the result then follows by regarding the tree as a union of
$\rho$ independent Yule trees.
\end{proof}

\section{Branching random walks on trees}\label{Sbranch}

Given a rooted tree $T$ and a probability distribution $\nu$ on $\bbR^d$,
%(or, more generally, $\bbR^d$ for some $d\ge1$), 
a \emph{branched random walk} on $T$, with \emph{offset distribution} $\nu$,
is a stochastic process $(X_v)_{v\in T}$ indexed by the nodes of $T$
that is defined recursively as follows:
\begin{PXenumerate}{BW}
\item \label{BWeta}
Let $\eta_v$, $v\in T$, be \iid{} random variables with $\eta_i\sim\nu$.
\item \label{BW0}
$X_o:=0$, where $o$ is the root of $T$.
\item \label{BWX}
If $w$ is a child of $v$, then $X_w:=X_v+\eta_w$.  
\end{PXenumerate}

In other words, if $v\prec w$ means that $v$ is an ancestor of $w$,
\begin{equation}\label{X}
  X_v:=\sum_{o\prec u\preceq v}\eta_u.
\end{equation}

The tree $T$ is usually random; we then tacitly assume that the random
variables $\eta_v$ are independent of the tree $T$.

\begin{remark}\label{R1}
Alternatively (and equivalently),
we may start with a rooted tree $T$ and 
random variable $\eta$,
by taking $\nu:=\cL(\eta)$,
the distribution of $\eta$;   \ref{BWeta} then says that
$\eta_i$ are independent copies of $\eta$.
In this setting, $\eta$ is called the \emph{offset}.
In the sequel, we use $\eta$ in this sense, to denote a generic random
variable with distribution $\nu$.
\end{remark}

\begin{remark}\label{R2}
We never use $\eta_o$, and may thus ignore it. 
For $v\neq o$, we may think of $\eta_v$ as
associated to the edge leading to $v$ from its parent; then $X_v$ is the sum
of these values for all edges on the path between $o$ and $v$.
(Alternatively, we could change the definition and let $X_o:=v_o$; this
would not affect our asymptotical results.)
\end{remark}

We are interested in the empirical distribution of the variables
$X_v$; this is by definition the random probability measure on $\bbR$
(or $\bbR^d)$
defined by
\begin{equation}\label{bmu}
\bmu:=\frac{1}{|T|}\sum_{v\in T}\gd_{X_v},  
\end{equation}
where $\gd_x$ (the Dirac delta) denotes a point mass at $x$, and $|T|$ is
the number of nodes in $T$. In other words,  given $(X_v)_{v\in T}$, $\bmu$ is
the distribution $\cL(X_V)$ of the value $X_V$ seen at a uniformly randomly
chosen node $V\in T$.

For a weighted tree, we modify the definition \eqref{bmu} by
counting each node $v$ according to its weight $\ww_v$. 
In our cases, $\ww_v=1$ for every $v\neq o$, and thus,
recalling that $|T|$ denotes the
total weight, 
\begin{equation}\label{bmuw}
\bmu:=\frac{1}{|T|}\sum_{v\in T}\ww_v\gd_{X_v}
=\frac{1}{|T|}\Bigpar{\rho\gd_{X_o}+\sum_{v\neq o}\gd_{X_v}}.
\end{equation}
Similarly, the random node $V\in T$ is chosen with probability
proportional to its weight, and then still $\bmu=\cL(X_V)$.
Since only the root has a weight different from 1, it is obvious that the
asymptotic results below are not affected be these modifications,
and the results hold for both definitions \eqref{bmu} and \eqref{bmuw}.
However, the modifications are natural, and convenient in the proofs below,
so we shall use \eqref{bmuw} in the weighted case.

In analogy with the \Polya{} urns defined earlier, we also define the
unnormalized measure
\begin{equation}\label{muw}
\mu:=|T|\bmu=\sum_{v\in T}\ww_v\gd_{X_v}.
\end{equation}

We consider an increasing sequence or family of (random)
trees $T_n$ with an integer parameter $n\ge0$,
or alternatively an increasing family $\ctt$ of random trees with
a real parameter $t\ge0$. 
We consider also
an \iid{} family $(\eta_v)_v$ %_{v\in T_\infty}$ 
of offsets, defined for all 
$v\in T_\infty:=\bigcup_n T_n$ [or $\bigcup_t \ctt$].
(Thus $\eta_v$ is defined for all $v\in T_n$ [$\ctt$], but does not
depend on the parameter $n$ [$t$].)
We denote the empirical distribution by $\bmu_n$ [$\bmu_t$], 
and 
our goal is to show
that it, suitably rescaled, \as{} converges to
a normal distribution as \ntoo{} or \ttoo; 
see \refS{Stree2} for precise statements.
As in \refT{TP}, this is a question of convergence of a
random probability measure  in the space $\cP(\bbR^d)$ of probability
measures on $\bbR^d$ with the standard (weak) topology, \cf{} \refR{Rannealed}.

We give a simple lemma that will be used later.

\begin{lemma} \label{L3b}
If\/ $\ctt$ is a weighted Yule tree and $\E|\eta|^2<\infty$, then for every
$t<\infty$, 
$\E \bigpar{\sum_{v\in\ctt}|X_v|}^2<\infty$.
\end{lemma}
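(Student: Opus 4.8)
The plan is to expand the square and exploit the tree structure of the branching random walk together with the moment bounds on $|\ctt|$ from \refL{L3a}. First I would write
\begin{equation*}
\Bigpar{\sum_{v\in\ctt}|X_v|}^2=\sum_{u,v\in\ctt}|X_u|\,|X_v|\le\sum_{u,v\in\ctt}\tfrac12\bigpar{|X_u|^2+|X_v|^2}=|\ctt|\sum_{v\in\ctt}|X_v|^2,
\end{equation*}
so it suffices to bound $\E\bigpar{|\ctt|\sum_{v\in\ctt}|X_v|^2}$. Using \eqref{X}, for $v\in\ctt$ with depth $d(v)=|\set{u:o\prec u\preceq v}|$ we have $|X_v|^2=\bigabs{\sum_{o\prec u\preceq v}\eta_u}^2\le d(v)\sum_{o\prec u\preceq v}|\eta_u|^2$ by \CSineq. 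Hence $\sum_{v\in\ctt}|X_v|^2\le D(\ctt)\sum_{v\in\ctt}|\eta_v|^2$, where $D(\ctt):=\max_{v\in\ctt}d(v)$ is the height of $\ctt$; and crudely $D(\ctt)\le|\ctt|$, giving the bound $|\ctt|^2\sum_{v\in\ctt}|\eta_v|^2$.

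Next I would condition on the tree $\ctt$ (equivalently, fix $t$ and note that the tree, hence $|\ctt|$, is independent of the \iid{} family $(\eta_v)$). Conditionally on $\ctt$, the random variables $|\eta_v|^2$ are \iid{} with mean $\E|\eta|^2<\infty$, so $\E\bigsqpar{\sum_{v\in\ctt}|\eta_v|^2\mid\ctt}=|\ctt|\,\E|\eta|^2$. Therefore
\begin{equation*}
\E\Bigpar{\sum_{v\in\ctt}|X_v|}^2\le\E\bigsqpar{|\ctt|^2\textstyle\sum_{v\in\ctt}|\eta_v|^2}=\E|\eta|^2\cdot\E|\ctt|^3,
\end{equation*}
which is finite for every $t<\infty$ by \refL{L3a} (applied with $r=3$). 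This completes the proof.

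I do not anticipate a serious obstacle here; the only point requiring a little care is the independence of the tree and the offsets, which is exactly the standing assumption recorded after \eqref{X} and in the setup of \refS{Sbranch}, so the conditioning step is legitimate. The bounds used ($D(\ctt)\le|\ctt|$ and the two applications of \CSineq) are deliberately wasteful, but they are more than enough since \refL{L3a} gives all moments of $|\ctt|$; sharpening them is unnecessary for the qualitative statement.
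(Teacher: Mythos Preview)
Your overall strategy is sound and close to the paper's, but there is a gap in the displayed implication
\[
|X_v|^2\le d(v)\sum_{o\prec u\preceq v}|\eta_u|^2
\quad\Longrightarrow\quad
\sum_{v\in\ctt}|X_v|^2\le D(\ctt)\sum_{v\in\ctt}|\eta_v|^2.
\]
When you sum the left-hand bound over $v$, each term $|\eta_u|^2$ is counted once for every descendant $v\succeq u$ in $\ctt$, not just once. Thus what you actually get is
\[
\sum_{v\in\ctt}|X_v|^2\le D(\ctt)\sum_{u\in\ctt}N(u)\,|\eta_u|^2,
\]
where $N(u)$ is the number of descendants of $u$ (including $u$). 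A path example shows the factor $N(u)$ cannot be dropped: on a path of length $n$, the offset at the first non-root node contributes to all $n$ terms, so the left side is of order $n^2\sum|\eta_u|^2$, not $n\sum|\eta_u|^2$.

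The fix is trivial, in the spirit of your own remark about ``deliberately wasteful'' bounds: use $N(u)\le|\ctt|$ (and $D(\ctt)\le|\ctt|$), giving $\sum_v|X_v|^2\le|\ctt|^2\sum_u|\eta_u|^2$ and hence
\[
\E\Bigpar{\sum_{v\in\ctt}|X_v|}^2\le\E|\eta|^2\cdot\E|\ctt|^4,
\]
finite by \refL{L3a} with $r=4$. This is exactly the bound the paper obtains, by a slightly more direct route: it bypasses depths entirely via the crude $|X_v|\le\sum_{w\in\ctt}|\eta_w|$, which immediately gives $\sum_v|X_v|\le|\ctt|\sum_w|\eta_w|$ and then squares. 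Your argument, once patched, is essentially the same; the paper's version just avoids the bookkeeping of ancestors versus descendants.
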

\begin{proof}
  Trivially, $|X_v|\le\sum_{w\in\ctt}|\eta_w|$ for every $v\in \ctt$ and thus
  \begin{equation}
\sum_{v\in\ctt}|X_v|\le |\ctt|\sum_{w\in\ctt}|\eta_w|.
  \end{equation}
It follows, using the \CSineq, that
\begin{equation}
  \E\Bigpar{\Bigpar{\sum_{v\in\ctt}|X_v|}^2\mid\ctt}
\le |\ctt|^2\sum_{u,w\in\ctt}\E(|\eta_u|\,|\eta_w|)
\le |\ctt|^4\E|\eta|^2
\end{equation}
and thus, using \refL{L3a},
\begin{equation}
  \E\Bigpar{\sum_{v\in\ctt}|X_v|}^2
\le \E|\eta|^2 \E|\ctt|^4 <\infty.
\end{equation}
%For general $\rho$, we may by monotonicity (in $\rho$) assume that $\rho$ is
%an integer, and the result then follows by regarding the tree as a union of
%$\rho$ Yule trees.
\end{proof}

\section{\Polya{} urns and trees}\label{Skorre}

The proofs 
by \citet{BTrate, BTinfty,BTnew} and \citet{MM}
are based on a natural
connection between 
\Polya{} urns and branching Markov chains on  \rrt{s},
and in particular between
\DRW{} \Polya{} urns and branching random walks
on \rrt{s}. 
In our setting, 
we use two  versions, one for 
\SRW{} urns and one for
\DRW{} urns, and we include also the weighted case (when $\mu_0(\cS)\neq1$).
%they can be formulated as follows.

\subsection{\SRWPU{s}}\label{SSkorr1} 
Consider a \Polya{} urn of the single ball addition type, see \refS{SPolya};
we let the initial composition $\mu_0$ have arbitrary mass $\rho>0$, but
assume that it is concentrated at 0; thus, $\mu_0=\rho\gd_0$.
Regard this initial mass as a ball with weight $\rho$ and colour $0$;
let all balls added later to the urn have weight 1.
Regard the balls in the urn as nodes in a tree, where the initial ball is
the root and
each new ball added
after drawing a ball becomes a child of the drawn ball.
It is obvious that the resulting random tree process is the weighted \rrt{}
defined in \refSS{SSrrt}. Furthermore, if the \Polya{} urn is of the 
\SRW{} type, then the labels on the ball form a branching random walk
\eqref{X}, with the same offset $\eta$; note that the measures $\mu_n$ and
$\tmu_n$ defined earlier (Sections \ref{SPolya} and \ref{Sbranch})
are the same for the \Polya{} urn and the branching random walk.

This means that, at least when $\mu_0=\rho\gd_0$, the first part of
\refT{TP} is equivalent to a result for the \rrt, see \refT{Trrt} below
and \refS{SpfTP}.

\subsection{\DRWPU{s}}\label{SSkorrD}
For the \DRWPU{s}, we describe the
connection used by 
\cite{BTrate, BTinfty,BTnew,MM} as follows.
Consider first an arbitrary measure-valued \Polya{} urn 
with deterministic
replacements $R_x\in\cM(\cS)$.
Denote the successive additions to the urn by,  see \eqref{update},
\begin{align}
\gD\mu_{n+1}:=\mu_{n+1}-\mu_n=R_{X_n},
\end{align}
and let $\gD\mu_0:=\mu_0$, the initial composition.
We may pretend that the different additions $\gD\mu_k$, $k\le n$, 
are identifiable
parts of $\mu_n$. Hence when we draw a ball, we can do it in two steps; we
first select an index $k\le n$, with probability $\gD\mu_k(\cS)/\mu_n(\cS)$, 
and then, given $k$, select $X_n$ with distribution $\widetilde{\gD\mu_k}$.
This defines a growing family $T_n$ of trees, where $T_n$ has node set
\set{0,\dots,n}, and $T_{n+1}$ is obtained from $T_n$ by adding $n+1$ as a
new node with mother $k$, the index selected when choosing $X_n$ in the
construction of the \Polya{} urn.

From now on, we assume that all replacements $R_x$ are probability measures,
\ie, have mass $R_x(\cS)=1$. Then it is obvious that this random family of
trees $T_n$ is the weighted \rrt{} with weight $\rho=\mu_0(\cS)$.
We mark each node $n$ in this tree by the measure $\gD\mu_n$, and also, for
$n>0$, 
by the colour $X_{n-1}$ of the ball drawn to find this addition; we write
$Z_n:=X_{n-1}$.
Then, given the trees $T_n$, $n\ge0$, these marks, and thus the \Polya{} urn,
are defined recursively as follows, with $o=0$, the root of the tree, 
\begin{PXenumerate}{P}
\item \label{PU0}
$\gD\mu_o:=\mu_0$
\item \label{PU}
If a node $v\neq o$ has mother $u$, then $Z_v$ is drawn with the
distribution $\gD\mu_u$, and then $\gD\mu_v:=R_{Z_v}$.
\end{PXenumerate}

If we further specialize to a \DRWPU, then,
see \refE{ERW}, for $v\neq o$,  assuming $\eta$ to be independent of 
everything else,
\begin{equation}\label{gdmu}
\gD\mu_v:=R_{Z_v}=\rr_{Z_v}=\cL(\eta+Z_v\mid Z_v).
\end{equation}

\begin{remark}
  It is easily seen that the relations in Sections \ref{SSkorr1} and
  \ref{SSkorrD} are connected by the correspondence in \refL{Lux};
if we take the tree in \refS{SSkorr1} and mark each node with colour $x$ by
$R_x$ ($\rho R_0$ for the root), then we obtain the corresponding
process in \refSS{SSkorrD}.
\end{remark}

\section{Results for trees}\label{Stree2}

We state here the results for the (weighted) \rrt{} and Yule tree; 
proofs are given in Sections \ref{SpfTYule}--\ref{SpfTrrt}. 
As said in the introduction, the results for Yule trees are essentially
proved by \citet[Theorem 4]{Uchiyama}; the weighted Yule trees considered
here are 
not quite included in his conditions (which otherwise are very general),
but his result is easily extended to the present case.
See also  \refS{Sbinary}, where
similar results for the \bst{} and the binary Yule tree are given.

Recall the definitions of the random trees in \refS{Stree1},
the empirical measures $\bmu_n$ or $\bmu_t$ in \refS{Sbranch},
and $\Thetaab$ in \eqref{Thetaab0}.

We assume that $\E|\eta|^2<\infty$, and let as above
%\begin{align}
$  \bm:=\E \eta \in \bbR^d$.
%\\
%\gS&:=\E\bigsqpar{\eta\eta\tr}-\bm\bm\tr \in\bbR^{d\times d}.
%\end{align}
Convergence in the space $\cP(\bbR^d)$ of probability measures
is always in the
usual weak topology.
\begin{theorem}
  \label{Trrt}
Let\/ $T_n$ be the \rrt{} and
suppose that\/ $\E|\eta|^2<\infty$.
Then, as \ntoo,
in $\cP(\bbR^d)$,
\begin{equation}\label{trrt1}
  \Theta_{\sqrt{\log n},\,\bm\log n}(\bmu_n) \asto
N\bigpar{0,\E[\eta\eta\tr]}
%=N\bigpar{0,\gS+\bm\bm\tr}
.
\end{equation}

More generally, the same result holds for the \wrrt{} with an arbitrary
weight $\rho>0$ defined in \refSS{SSrrt}.
\end{theorem}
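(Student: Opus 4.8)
The plan is to prove convergence of the rescaled empirical measure $\Theta_{\sqrt{\log n},\,\bm\log n}(\bmu_n)$ by the standard ``convergence of a martingale of test functions'' method: for a suitable family of smooth test functions, evaluate $\langle \bmu_n, f\rangle$ against the rescaled Gaussian and show that, once recentred and rescaled, this converges a.s. First I would reduce to the centred case. Writing $X_v = \bm\,d(v) + \bar X_v$ where $d(v)$ is the depth (number of edges from the root) and $\bar X_v = \sum_{o\prec u\preceq v}(\eta_u - \bm)$, the offsets $\eta_u - \bm$ have mean zero and finite second moment $\Sigma := \E[(\eta-\bm)(\eta-\bm)\tr] = \E[\eta\eta\tr] - \bm\bm\tr$. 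Since the depth $d(V)$ of a uniformly random node in $T_n$ is $\log n + O_p(\sqrt{\log n})$ with Gaussian fluctuations on scale $\sqrt{\log n}$ (Drmota, Theorem 6.17), the rescaling by $\sqrt{\log n}$ washes out the difference between $\log n$ and $d(v)$ in the centring, but not in the \emph{variance} — this is exactly the point flagged after \refT{TP}: the limiting covariance is $\Sigma + \bm\bm\tr = \E[\eta\eta\tr]$, the extra $\bm\bm\tr$ coming from the Gaussian fluctuations of the depth. So the heart of the matter is a joint statement: under $\bmu_n$, the pair $\bigl(d(v) - \log n,\ \bar X_v\bigr)/\sqrt{\log n}$ converges a.s.\ (conditionally) to a product-type Gaussian with independent components of covariances $1$ (for the depth) and $\Sigma$ (for $\bar X$), and then $X_v/\sqrt{\log n} - \bm\sqrt{\log n} = \bm(d(v)-\log n)/\sqrt{\log n} + \bar X_v/\sqrt{\log n}$ has limiting covariance $\bm\bm\tr + \Sigma = \E[\eta\eta\tr]$.

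Next I would set up the martingale. Fix $\xi\in\bbR^d$ and $s\in\bbR$ and consider, for the Yule-tree embedding $\ctt$ (which, by \eqref{winston} and \eqref{tauas}, is equivalent to the \rrt{} via $T_n = \cT_{\tau_n}$ and is more convenient because of its additivity in continuous time),
\begin{equation}\label{martdef}
  M_t(\xi,s) := e^{-t}\sum_{v\in\ctt} e^{\ii\xi\cdot \bar X_v / \sqrt t}\, e^{\ii s\,(d(v)-t)/\sqrt t}.
\end{equation}
Because each node of the Yule tree founds an independent subtree, $(M_t)$ (after the appropriate normalisation by the known first moment) is, along a suitable filtration, a nonnegative-modulus martingale whose expectation one computes explicitly: $\E\bigl[\sum_{v\in\ctt}e^{\ii\xi\cdot\bar X_v}z^{d(v)}\bigr]$ satisfies a linear ODE in $t$ whose solution is elementary (this is where the Yule tree pays off — we get closed forms rather than estimates, as the authors promise). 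One then shows $M_t(\xi,s)\to W\cdot \exp\bigl(-\tfrac12\xi\tr\Sigma\xi - \tfrac12 s^2\bigr)$ a.s., where $W>0$ is the limit in \eqref{winston}; dividing by $|\ctt|/e^t \to W$ gives that $\langle \bmu_t,\ \text{(the test function)}\rangle$ converges a.s.\ to the characteristic function of $N(0,\Sigma)\otimes N(0,1)$ evaluated at $(\xi,s)$. The $L^2$ bound needed to run the martingale convergence theorem is supplied by \refL{L3b} together with \refL{L3a}. Finally one passes from pointwise-in-$(\xi,s)$ a.s.\ convergence of characteristic functions to a.s.\ weak convergence of the random measures: one needs convergence simultaneously for all $(\xi,s)$ on the null-set-free event, which is handled by working in a separable function space — a weighted Sobolev space as indicated in \refR{Rspaces} — where the relevant functionals are continuous and a countable dense family suffices, then invoking Lévy's continuity theorem.

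The last step is to transfer from the Yule tree to the \rrt{} and to add the weight $\rho$. For the transfer, $T_n = \cT_{\tau_n}$ with $\tau_n = \log n + O(1)$ a.s.\ by \eqref{tauas}; since replacing $t$ by $t + O(1)$ changes the rescaling factor $\sqrt t$ by $1 + o(1)$ and the centring $t$ by an additive $O(1)$ that is $o(\sqrt t)$, the a.s.\ limit is unchanged — one has to check this substitution is harmless uniformly, which is routine given that the limiting functional is continuous in the scaling parameter. For the weighted \rrt{} with weight $\rho$, take $\rho$ an integer first: the weighted Yule tree is then $\rho$ merged independent Yule trees, \eqref{winston} still holds (with $W\sim\Gamma(\rho)$), and the martingale argument goes through verbatim with the same limit, since the root contributes a vanishing fraction of the total weight; the non-integer case follows by the same monotonicity-in-$\rho$ device used in \refL{L3a} and \refL{L3b}, or simply by redoing the ODE computation with initial weight $\rho$, which only changes the multiplicative constant, not the exponential rate or the Gaussian limit.

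\textbf{Main obstacle.} The genuinely delicate part is not the martingale convergence itself — that is classical — but the passage from ``for each fixed $(\xi,s)$, a.s.\ convergence holds'' to ``a.s., convergence holds for all $(\xi,s)$ simultaneously, hence weak convergence of measures.'' Naively one only gets convergence off a null set depending on $(\xi,s)$. The fix is to prove convergence of $\langle\bmu_n,f\rangle$ for $f$ ranging over a \emph{separable} space of test functions — this is why the proof is organised around a Sobolev Hilbert space $W_1^2$ of functions on $\bbR^d$ (see \refR{Rspaces}) rather than around individual characters: one proves a.s.\ convergence of the measures as elements of the dual of that space, where a single null set suffices, and then upgrades to weak convergence in $\cP(\bbR^d)$ using tightness (which follows from the a.s.\ boundedness of second moments, \refL{L3b}) plus identification of the limit via the characteristic function.
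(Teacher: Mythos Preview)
Your overall architecture matches the paper's: embed the \rrt{} in the Yule tree, build a martingale from the Fourier transform of the empirical measure, upgrade to uniform convergence via a Sobolev-space argument, and transfer back through $\tau_n=\log n+O(1)$. But the central step has a real gap. The object you write down,
\[
M_t(\xi,s)=e^{-t}\sum_{v\in\ctt}e^{\ii\xi\cdot\bar X_v/\sqrt t}\,e^{\ii s(d(v)-t)/\sqrt t},
\]
is \emph{not} a martingale in $t$, because the Fourier frequency $\xi/\sqrt t$ changes with $t$; the branching property only yields the martingale identity when the frequency is the same at the two times being compared. You half-notice this --- two lines later you switch to the fixed-parameter object $\sum_v e^{\ii\xi\cdot\bar X_v}z^{d(v)}$ --- but never reconcile the two, and everything downstream depends on it. The paper's resolution is to keep the parameter fixed throughout: with $F_t(s):=\sum_{v\in\ctt}e^{\ii s X_v}$ the normalisation $M_t(s):=e^{-t\gf(s)}F_t(s)$ is a genuine martingale for each fixed $s$; one shows it is $L^2$-bounded as a $\WWJ$-valued martingale on a small interval $J\ni0$ (this is where $\E|\eta|^2<\infty$ enters, via $\gf\in C^2$), obtains a.s.\ convergence $M_t\to M_\infty$ in $C(J)$ from the Radon--Nikod\'ym property of the Hilbert space, and only \emph{then} substitutes the $t$-dependent argument $s(t)=s/\sqrt t\to0$, using uniform convergence and continuity of $M_\infty$ to conclude $M_t(s(t))/M_t(0)\to1$.

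A few smaller points. Your decomposition $X_v=\bm\,d(v)+\bar X_v$ is correct but unnecessary: the Taylor expansion of $\gf$ at $0$ already delivers the limit covariance $\E[\eta\eta\tr]$ in one stroke, without tracking the depth separately. Your plan to work in a Sobolev space on $\bbR^d$ will not go through for $d\ge2$, since the embedding $W_1^2\hookrightarrow C$ fails there; the paper avoids this by reducing to one-dimensional projections $u\tr X_v$ and running the entire argument on an interval in $\bbR$. And for the weighted case with general $\rho>0$ no monotonicity device is needed: repeating the first- and second-moment ODEs with initial value $\rho$ only changes multiplicative constants, not the estimates, so the same proof applies verbatim.
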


\begin{theorem}[essentially \citet{Uchiyama}]
  \label{TYule}
Let\/ $\ctt$ be the Yule tree and
suppose that\/ $\E|\eta|^2<\infty$.
Then, as \ttoo,
in $\cP(\bbR^d)$,
\begin{equation}\label{tyule}
  \Theta_{\sqrt{t},\,\bm t}(\bmu_t) \asto
N\bigpar{0,\E[\eta\eta\tr]}
%=N\bigpar{0,\gS+\bm\bm\tr}
.
\end{equation}

More generally, the same result holds for the weighted version 
with an arbitrary weight $\rho>0$ defined in \refSS{SSYule}.
\end{theorem}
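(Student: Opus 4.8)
\emph{The plan.} I would prove \refT{TYule} through characteristic functions. By \Levy's continuity theorem (applied to a fixed realisation) it suffices to exhibit an event of probability $1$ on which, for every $\theta\in\bbR^d$,
\begin{equation*}
\widehat{\Theta_{\sqrt t,\,\bm t}(\bmu_t)}(\theta)
=\frac1{|\ctt|}\sum_{v\in\ctt}e^{\ii\,\theta\tr(X_v-\bm t)/\sqrt t}
\;\longrightarrow\;
e^{-\frac12\theta\tr\E[\eta\eta\tr]\,\theta}
\qquad(\ttoo),
\end{equation*}
the limit being the (continuous) characteristic function of $N\bigpar{0,\E[\eta\eta\tr]}$. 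Set $\psi(\theta):=\E e^{\ii\theta\tr\eta}$, $S_t(\theta):=\sum_{v\in\ctt}e^{\ii\theta\tr X_v}$ and $M_t(\theta):=e^{-t\psi(\theta)}S_t(\theta)$. The hypothesis $\E|\eta|^2<\infty$ gives $\psi\in C^2$ with $\psi(\theta)=1+\ii\,\bm\tr\theta-\tfrac12\theta\tr\E[\eta\eta\tr]\theta+o(|\theta|^2)$ near $0$; this is what makes the limiting matrix the second moment $\E[\eta\eta\tr]$ rather than a covariance. Splitting $\cT_{t+s}$ according to the ancestor lying in $\ctt$ and using that the hanging subtrees are i.i.d.\ Yule trees carrying fresh i.i.d.\ branching random walks, one obtains $\E[S_{t+s}(\theta)\mid\cF_t]=e^{s\psi(\theta)}S_t(\theta)$, so for each fixed $\theta$ the process $(M_t(\theta))_{t\ge0}$ is a complex martingale with $M_0(\theta)=1$ and, by \eqref{winston}, $M_t(0)=e^{-t}|\ctt|\asto W$. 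A one-line rewriting together with the Taylor expansion of $\psi$ shows that
\begin{equation*}
\widehat{\Theta_{\sqrt t,\,\bm t}(\bmu_t)}(\theta)=\frac{e^t}{|\ctt|}\,e^{-\frac12\theta\tr\E[\eta\eta\tr]\theta+o(1)}\,M_t\bigl(\theta/\sqrt t\bigr),
\end{equation*}
so, since $e^t/|\ctt|\to1/W$, \refT{TYule} reduces to showing that, on one event of probability $1$, $\theta\mapsto M_t(\theta/\sqrt t)$ converges to the constant $W$ locally uniformly in $\theta$, which then feeds \Levy's theorem with no separate tightness argument.

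The elementary ingredient is an \emph{exact} second-moment computation. Differentiating $\E|S_t(\theta)|^2$ along the Yule dynamics (each of the $|\ctt|$ particles giving birth at rate $1$, adding $e^{\ii\theta\tr X_u}e^{\ii\theta\tr\eta}$ with a fresh $\eta$) gives the linear ODE
\begin{equation*}
\frac{d}{dt}\E|S_t(\theta)|^2=2\Re\psi(\theta)\,\E|S_t(\theta)|^2+e^{t},
\qquad\text{hence}\qquad
\E|M_t(\theta)|^2=1+\int_0^t e^{(1-2\Re\psi(\theta))u}\,du .
\end{equation*}
Since $\Re\psi(0)=1$ and $\psi$ is continuous, on a small ball $B=\set{|\theta|\le\theta_0}$ we have $\Re\psi(\theta)\ge\tfrac34$, so $\E|M_t(\theta)|^2\le 3$ uniformly in $t\ge0$ and $\theta\in B$; in particular each $M_t(\theta)$, $\theta\in B$, converges a.s.\ and in $L^2$, and (applying the same formula at $\theta/\sqrt t$, where $1-2\Re\psi(\theta/\sqrt t)=-1+\theta\tr\E[\eta\eta\tr]\theta/t+o(1/t)$) the rescaled quantities $M_t(\theta/\sqrt t)$ are bounded in $L^2$ as well. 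The same bound gives $|M_t(\theta/\sqrt t)|\le (|\ctt|/e^t)\,e^{\frac12\theta\tr\E[\eta\eta\tr]\theta+o(1)}$, which is the domination one needs when integrating the Fourier transform against a test function (a family of such tests, dense in $C_0(\bbR^d)$, then recovers weak convergence of the measures).

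The substantial step is the last one: upgrading ``$M_t(\theta)\asto M_\infty(\theta)$ for each fixed $\theta\in B$'' to control of the martingale at the \emph{moving} argument $\theta/\sqrt t\to0$. Here I would follow the method of \citet{Uchiyama} (see also \citet{Biggins-uniform}): view $\theta\mapsto M_t(\theta)|_B$ as a martingale valued in a Sobolev space $\WWk$ of functions on $B$, with $k$ large enough that $\WWk\hookrightarrow C(\overline B)$ (so the limiting function inherits continuity and may be evaluated pointwise), and deduce a.s.\ convergence in that space from a uniform $L^2$-bound on the norm, via the explicit estimates above. The obstacle — and the reason the exponential-moment hypothesis of \cite{MM} is not obviously avoidable by soft arguments — is that the phase of $M_t(\theta)$ winds at rate $\sim t\,\bm\tr\theta$, so $\theta\mapsto M_t(\theta)$ is \emph{not} equicontinuous on $B$ and does not converge uniformly there in any naive sense; one must untwist this rotation, i.e.\ set up the function-valued martingale in a co-moving frame, or equivalently only after passing to the diffusively rescaled parameter, exactly as in \cite{Uchiyama}. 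This is the heart of the matter; everything else is the bookkeeping above. Once $M_t(\theta/\sqrt t)\to W$ locally uniformly is established, \Levy's theorem finishes the proof. The weighted Yule tree of weight $\rho>0$ is treated identically: for integer $\rho$ it is a union of $\rho$ independent Yule trees, so $M_t(\theta)$ becomes a sum of $\rho$ i.i.d.\ copies, $M_0(\theta)=\rho$, $W\sim\Gamma(\rho)$, and all the estimates persist; general $\rho$ follows by monotonicity, as in the proof of \refL{L3a}.
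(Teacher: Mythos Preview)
Your overall architecture matches the paper's: define $M_t(\theta)=e^{-t\psi(\theta)}S_t(\theta)$, show it is an $L^2$-bounded martingale in a Sobolev space so that it converges in $C(\overline B)$, then plug in the moving argument $\theta/\sqrt t\to0$ and use continuity of the limit $M_\infty$ together with $M_\infty(0)=W>0$. Your second-moment ODE and the resulting bound $\E|M_t(\theta)|^2\le3$ on a small ball are correct and coincide with the paper's Lemma~\ref{LM}. However, two points need correction.

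\textbf{The ``phase-winding obstacle'' is not there.} You assert that $\theta\mapsto M_t(\theta)$ fails to be equicontinuous on $B$ because its phase winds at rate $t\,\bm\tr\theta$, and that one must pass to a co-moving frame or the diffusively rescaled parameter. This is a misdiagnosis: the normalisation $e^{-t\psi(\theta)}$ already contains the factor $e^{-\ii t\,\bm\tr\theta}$ and removes exactly that rotation. The paper establishes directly, by an explicit second-moment computation of $\partial_s M_t(s)$ (Lemma~\ref{LMWWJ}, equations \eqref{mad}--\eqref{mal}), that $\E\|M_t\|_{\WWJ}^2\le C$ uniformly in $t$; the apparently dangerous factor $t$ coming from $\partial_s e^{-t\psi(s)}=-t\psi'(s)e^{-t\psi(s)}$ is killed by the exponential decay $e^{t(\psi(0)-\psi(s)-\psi(-s))}\le e^{-t/2}$ arising in the mixed second moment. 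Hence $M_t\to M_\infty$ in $C(J)$ with no further untwisting, and then $M_t(s(t))\to M_\infty(0)$ for any $s(t)\to0$. Your proposed alternative, to work with $t\mapsto M_t(\theta/\sqrt t)$, is not a martingale and would not yield the needed convergence.

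\textbf{The $d$-dimensional Sobolev route costs moments.} You propose $\WWk(B)$ with $k$ large enough that $\WWk(B)\hookrightarrow C(\overline B)$, i.e.\ $k>d/2$. But bounding $\E\|M_t\|_{\WWk(B)}^2$ requires $k$ derivatives of $\psi$, hence $k$ moments of $\eta$; for $d\ge4$ this already exceeds the second-moment hypothesis. The paper avoids this (see \refR{Rspaces}) by reducing to $d=1$: for each fixed $u\in\bbR^d$ one applies the one-dimensional argument to the projections $u\tr X_v$ with offset $u\tr\eta$, obtaining \eqref{kaisb} for every $u$, and then invokes \cite{BertiEtAl} to pass from ``a.s.\ for each $u$'' to ``a.s.\ for all $u$''. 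This reduction is the device that keeps the moment assumption at~$2$.

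Finally, your treatment of the weighted case is too sketchy: monotonicity in $\rho$ gives upper bounds on moments (as in \refL{L3a}) but not the limit identification. The paper simply reruns the moment computations with initial condition $F_0(s)=\rho$, obtaining \eqref{jb}--\eqref{RFF}; the rest of the argument is unchanged (\refR{Rrho}).
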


\begin{remark}
  \label{Ralt}
As for \refT{TP}, the results can be stated as 
conditional convergence in distribution, see  \refR{Rinspect}.
Let $V_n$ be a random node in the \rrt{} $T_n$, as in \refS{Sbranch} chosen with
probability proportional to its weight (and thus uniformly when $\rho=1$).
Then \eqref{trrt1} is equivalent to
\begin{equation}\label{trrt2}
  \cL\Bigpar{\frac{X_{V_n}-\bm\log n}{\sqrt{\log n}}\Bigm| T_n,\set{\eta_v}_v}
\asto 
N\bigpar{0,\E[\eta\eta\tr]}
%=N\bigpar{0,\gS+\bm\bm\tr}
,
\end{equation}
which can be written
\begin{equation}\label{trrt3}
\text{Conditioned on \set{T_n} and \set{\eta_v}, a.s,}\quad
  \frac{X_{V_n}-\bm\log n}{\sqrt{\log n}}
\dto 
N\bigpar{0,\E[\eta\eta\tr]}.
\end{equation}
The same applies to \refT{TYule}, and the binary trees in
\refS{Sbinary}; we leave the details to the reader.

Note also that by unconditioning in \eqref{trrt2}--\eqref{trrt3}, we obtain
a (simpler) annealed version, \cf{}
\refR{Rannealed}.
\end{remark}

\begin{example}\label{Edepth}
  As a special case of the results above,  let $\eta\equiv1$
  (deterministically). Then $X_v$ is the depth of $v$, and thus Theorems
  \ref{Trrt} and \ref{TYule}  show that the distribution of node depths in a
(weighted)   \rrt{} or a Yule tree \as{} is asymptotically normal.
Note that in this case,  with the normalizations above,
the limit is $N(0,1)$.
(This is known, at least in the unweighted case,
for example from \cite{Uchiyama} or \cite[Remark 6.19 and (6.25)]{Drmota};
see also \cite[Theorem 6.17]{Drmota} on the insertion depth, 
which is related to the annealed version,
and the corresponding result for binary search trees and binary Yule
trees
in \cite{ChauvinDJ} and \cite{ChauvinKMR}.)

This special case 
implies that in the asymptotic variance $\E[\eta\eta\tr]$ in the theorems
above, we can interpret $\bm\bm\tr$ as coming from the random fluctuations
of the depths;
thus the contribution coming from the fluctuations of the offsets is
$\E[\eta\eta\tr]-\bm\bm\tr$, which is the covariance matrix of $\eta$.
\end{example}

\begin{remark}
  The same problem for conditioned Galton--Watson trees, 
which includes for example uniformly
  random plane trees and binary trees, has been studied by
\citet{AldousISE}; see also, \eg, 
\cite{SJ164}.
The results for those random trees are very different from the present ones,
with convergence in distribution to a non-random limit (known as ISE).
\end{remark}

\section{Proof of \refT{TYule}}\label{SpfTYule}

In this section we prove \refT{TYule}; we then show in \refS{SpfTrrt}
that \refT{Trrt} is a simple consequence of \refT{TYule}. 
On the other hand, it is also easy to prove \refT{Trrt} directly using same
arguments as for \refT{TYule} with only minor modifications; we give in
\refS{SpfTrrt} also this, alternative, proof for comparison. (This is the
method used by \citet{MM}, under somewhat stronger conditions.)

 The basic idea is the same as in \citet{Uchiyama}
(and in many other papers), although
the details are different from the very general case in \cite{Uchiyama};
we construct a
 martingale from the characteristic function (Fourier transform) of the
 empirical distribution, and then use 
 martingale theory to obtain \as{} uniform convergence of this martingale,
 leading to convergence of the \chf{} of suitable rescaled $\Thetaab(\bmu)$.
See further \refR{Rspaces}.

We consider a branching random walk on
the Yule tree $\ctt$ with offsets $\eta_v$ as described in Sections
\ref{Stree1}--\ref{Sbranch}.
In this section, we assume until further notice (at the end of the
section) that $d=1$. 
(Actually,  all formulas except \eqref{kg} extend with at most notational
differences to $d>1$, but we do not use this; see also \refR{Rspaces}.)
We assume also, for simplicity, that $\rho=1$, so
that we consider the standard Yule tree; the minor modifications for a
general $\rho$ are discussed in \refR{Rrho} after the lemmas.

Let $\cF_t$ be the \gsf{} generated by all events (births and offsets) 
up to time $t$.

$C$ denotes positive constants that may vary from one occurrence to the
next.
They may depend on the offset distribution, but not on $n$, $t$ or other
variables. 

Denote the characteristic function of the offset distribution by
\begin{equation}\label{gf}
  \gf(s):=\E e^{\ii s \eta}.
\end{equation}
Fix throughout the proof $\gd>0$ such that $\Re \gf(s)\ge \frac{3}4$ when 
$|s|\le\gd$, and let $J$ be the interval $[-\gd,\gd]$. 

Define the Fourier transform of a measure $\mu\in\cM(\bbR)$ by 
\begin{equation}\label{FT}
  \hmu(s)=\int_{\bbR} e^{\ii sx}\dd\mu;
\end{equation}
recall that this is the \chf{} if $\mu$ is a probability measure.
Define the complex-valued random function, with $\mu_t$ given by \eqref{muw}
for $\ctt$,
\begin{equation}
  \label{F}
F_t(s):=
\hmu_t(s)
=
\sum_{v\in \ctt} e^{\ii s X_v}, 
\qquad s\in \bbR.
\end{equation}
Note that $F_t(0)=|\ctt|$, and that
 $F_t(s)/F_t(0)$ is the characteristic function of the probability measure
$\bmu_t$, see \eqref{fle} below.

Although $F_t(s)$ is defined for all real $s$, we shall mainly consider $F_t$
as a function on $J$.
We begin by computing the first and second moments of $F_t(s)$.
\begin{lemma}
  \label{L1}
  \begin{thmenumerate}
  \item \label{L1F}
    For every $t\ge0$ and $s\in\bbR$,
    \begin{equation}\label{EF}
      \E F_t(s) = e^{t\gf(s)}.
    \end{equation}
\item \label{L1FF}
    For every $t\ge0$ and $s_1,s_2\in J$,
    \begin{equation}\label{EFF}
      \begin{split}
      \E \bigpar{F_t(s_1)F_t(s_2)} 
&= 
\frac{\gf(s_1)+\gf(s_2)}{\gf(s_1)+\gf(s_2)-\gf(s_1+s_2)}
e^{t(\gf(s_1)+\gf(s_2))}
\\&\qquad{}-
\frac{\gf(s_1+s_2)}{\gf(s_1)+\gf(s_2)-\gf(s_1+s_2)}
e^{t\gf(s_1+s_2)}
.
      \end{split}
    \end{equation}
  \end{thmenumerate}
\end{lemma}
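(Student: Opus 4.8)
The plan is to compute both moments by setting up differential equations in $t$, exploiting the Markovian structure of the Yule tree together with the branching-random-walk rule \ref{BWX}. First I would establish \ref{L1F}. Let $g(t):=\E F_t(s)=\E\sum_{v\in\ctt}e^{\ii sX_v}$. Conditioning on the state of the process at time $t$ and looking at the infinitesimal evolution on $[t,t+h]$: each of the $|\ctt|$ nodes independently produces a child at rate $1$, and a new child $w$ of $v$ carries $X_w=X_v+\eta_w$ with $\eta_w$ a fresh independent copy of $\eta$. Hence, conditionally on $\cF_t$,
\begin{equation}
  \E\bigsqpar{F_{t+h}(s)\mid\cF_t}
  = F_t(s) + h\sum_{v\in\ctt}e^{\ii sX_v}\,\E e^{\ii s\eta} + o(h)
  = \bigpar{1+h\gf(s)}F_t(s)+o(h).
\end{equation}
Taking expectations gives $g'(t)=\gf(s)g(t)$, and since $g(0)=F_0(s)=e^{\ii sX_o}=1$ (as $X_o=0$ and $\rho=1$), we obtain $g(t)=e^{t\gf(s)}$, which is \eqref{EF}. (To make this rigorous I would note, via \refL{L3a}, that $\E|F_t(s)|\le\E|\ctt|<\infty$ so that all expectations are finite and differentiation under the expectation is justified; $|F_t(s)|\le|\ctt|$ also controls the $o(h)$ term uniformly.)

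For \ref{L1FF} I would do the analogous computation for the product. Write $h(t):=\E\bigpar{F_t(s_1)F_t(s_2)}=\E\sum_{u,v\in\ctt}e^{\ii s_1X_u+\ii s_2X_v}$; this is finite for $s_1,s_2\in\bbR$ by \refL{L3b} (or just $|F_t(s_1)F_t(s_2)|\le|\ctt|^2$ and \refL{L3a}). The infinitesimal analysis now has a diagonal correction: when a new node $w$ with parent $v$ is born in $[t,t+h]$, it contributes new pairs $(w,v')$, $(v',w)$ and $(w,w)$ for all $v'\in\ctt$ (including $v'=w$). Summing the expected contributions over all possible parents $v$, and using that $X_w=X_v+\eta_w$ with $\eta_w$ independent of $\cF_t$, one gets
\begin{equation}
  h'(t) = \bigpar{\gf(s_1)+\gf(s_2)}h(t) + \gf(s_1+s_2)\,\E F_t(s_1+s_2),
\end{equation}
where the first term comes from pairing the new node with an old node on either side (giving factors $\gf(s_1)$ and $\gf(s_2)$ respectively, each multiplying the ``old'' sum), and the extra term $\gf(s_1+s_2)\E F_t(s_1+s_2)$ comes from the diagonal pairs $(w,w)$, for which $e^{\ii s_1X_w+\ii s_2X_w}=e^{\ii(s_1+s_2)X_v}e^{\ii(s_1+s_2)\eta_w}$. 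Substituting \eqref{EF} for $\E F_t(s_1+s_2)=e^{t\gf(s_1+s_2)}$ turns this into a linear inhomogeneous ODE
\begin{equation}
  h'(t)=\bigpar{\gf(s_1)+\gf(s_2)}h(t)+\gf(s_1+s_2)e^{t\gf(s_1+s_2)},
  \qquad h(0)=1,
\end{equation}
whose solution by the integrating factor $e^{-t(\gf(s_1)+\gf(s_2))}$ is exactly the right-hand side of \eqref{EFF}; here the restriction $s_1,s_2\in J$ guarantees $\Re\bigpar{\gf(s_1)+\gf(s_2)}\ge\tfrac32>1\ge\Re\gf(s_1+s_2)$, so the denominator $\gf(s_1)+\gf(s_2)-\gf(s_1+s_2)$ is nonzero and the particular solution is the stated one.

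The main obstacle I anticipate is purely bookkeeping in the second computation: correctly enumerating which new ordered pairs of nodes appear when a single node is added (old–new, new–old, new–new), and tracking that the parent $v$ of the new node ranges over all of $\ctt$ so that the ``old'' sums reassemble into $F_t(s_1)$, $F_t(s_2)$, or $F_t(s_1+s_2)$. A cleaner alternative, which I might use instead to avoid the infinitesimal argument altogether, is to write $F_t(s_1)F_t(s_2)=\sum_{u,v}e^{\ii s_1X_u+\ii s_2X_v}$, split according to the most recent common ancestor $w=u\wedge v$ of the pair $(u,v)$, and compute directly: for fixed $w$, the two paths from $w$ down to $u$ and to $v$ use disjoint sets of offset variables (except that when $u=v=w$ there is no branching), so the expectation factorizes and one is left with summing $e^{t\gf(\cdot)}$-type generating functions over the combinatorics of the Yule tree — but the ODE route is shorter and is the one I would write up.
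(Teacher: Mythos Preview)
Your proposal is correct and follows essentially the same approach as the paper: both derive the linear ODEs $g'(t)=\gf(s)g(t)$ and $h'(t)=(\gf(s_1)+\gf(s_2))h(t)+\gf(s_1+s_2)\E F_t(s_1+s_2)$ from the infinitesimal (Markov/Poisson) evolution of the Yule tree, and solve them with the same initial data, using the same lower bound $\Re(\gf(s_1)+\gf(s_2)-\gf(s_1+s_2))\ge\tfrac12$ on $J$ to ensure the denominator in \eqref{EFF} is nonzero. Your double-sum bookkeeping (old--new, new--old, new--new pairs) is just a rewording of the paper's expansion of $(F_t(s_1)+e^{\ii s_1 X_w})(F_t(s_2)+e^{\ii s_2 X_w})-F_t(s_1)F_t(s_2)$.
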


\begin{proof}
\pfitemref{L1F}
Each existing node $v$ gets a new child, $w$ say,  with intensity 1, 
independently of the past. 
If this happens, then $F_t(s)$ increases by $e^{\ii s X_w}=e^{\ii s(X_v+\eta_w)}$.
Since $\eta_w$ independent of $\cF_t$, 
the conditional expectation given $\cF_t$ of this possible jump of $F_t(s)$
is 
$\E\bigpar{e^{\ii s(X_v+\eta_w)}\mid\cF_t}=e^{\ii s X_v}\gf(s)$.
It follows by standard Poisson process theory that, 
for any fixed $s\in\bbR$,
there exists a martingale $M'_t$  (depending on $s$) such that,
in the notation of stochastic calculus, 
\begin{equation}\label{stoc}
  \dd F_t(s) =\sum_{v\in\ctt} e^{\ii s X_v}\gf(s)\dd t + \dd M'_t
=\gf(s)F_t(s)\dd t + \dd M'_t.
\end{equation}
In particular, taking the expectation, we obtain 
\begin{equation}
  \label{er66}
  \ddt \E F_t(s) =  \gf(s) \E F_t(s).
\end{equation}
This differential equation, with the initial value $\E F_0(s)=1$, has the
solution \eqref{EF}.

\pfitemref{L1FF}
Arguing as for part \ref{L1F}, we obtain for every real $s_1,s_2$
\begin{align}\label{erII}
%  \begin{split}
&\ddt \E \bigpar{F_{t}(s_1)F_t(s_2)}
\notag\\&\quad
= \E\sum_{v\in\ctt} 
\Bigpar{
\bigpar{F_t(s_1)+e^{\ii s_1(X_v+\eta_w)}}  \bigpar{F_t(s_2)+e^{\ii
    s_2(X_v+\eta_w)}}
-F_t(s_1)F_t(s_2)}
\notag\\&\quad
=\E\Bigpar{F_t(s_1)F_t(s_2)e^{\ii s_2 \eta_w} + F_t(s_1)F_t(s_2)e^{\ii s_1 \eta_w}
+ F_t(s_1+s_2)e^{\ii (s_1+s_2) \eta_w}}
\notag\\&\quad
=\bigpar{\gf(s_1)+\gf(s_2)}\E\bigpar{F_t(s_1)F_t(s_2)}
+ \gf(s_1+s_2)\E F_t(s_1+s_2).
%  \end{split}
\end{align}
For $s_1,s_2\in J$, we have 
\begin{equation}
  \label{boss}
\Re\bigpar{\gf(s_1)+\gf(s_2)-\gf(s_1+s_2)}\ge \frac{3}4+\frac{3}4-1=\frac12>0
\end{equation}
and thus \eqref{erII} has the solution \eqref{EFF}, again recalling the
initial valure $F_0(s)=1$.
\end{proof}

The proof is based on the following martingale.
\begin{lemma}
\label{LM}
Let 
\begin{equation}\label{M}
  M_t(s) := \frac{F_t(s)}{\E F_t(s)} = e^{-t\gf(s)}F_t(s).
\end{equation}
Then $M_t(s)$, $t\ge0$, is a (complex) martingale for every fixed $s\in\bbR$.
Furthermore, for $s\in J$, this martingale is uniformly $L^2$-bounded:
\begin{equation}\label{lm}
  \E |M_t(s)|^2\le C,
\qquad t\ge0,\, s\in J.
\end{equation}
\end{lemma}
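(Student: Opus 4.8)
The plan is to derive both assertions directly from \refL{L1}, using only the moment bound of \refL{L3a}. For the martingale property I would fix $s\in\bbR$ and $0\le u\le t$ and exploit the branching property of the Yule tree: conditionally on $\cF_u$, the tree $\ctt$ is obtained from $\cT_u$ by letting each node $w\in\cT_u$ independently generate its own copy of the Yule tree run for time $t-u$, with fresh offsets on the new edges. Grouping the nodes of $\ctt$ by their ancestor in $\cT_u$ gives
\[
F_t(s)=\sum_{w\in\cT_u}e^{\ii sX_w}\sum_{v\in\ctt,\; w\preceq v}e^{\ii s(X_v-X_w)},
\]
where, conditionally on $\cF_u$, the inner sums are independent copies of $F_{t-u}(s)$, one for each $w\in\cT_u$, and independent of $\cF_u$. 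Taking conditional expectations and invoking \eqref{EF} yields $\E\bigpar{F_t(s)\mid\cF_u}=F_u(s)e^{(t-u)\gf(s)}$, and multiplying by $e^{-t\gf(s)}$ gives $\E\bigpar{M_t(s)\mid\cF_u}=M_u(s)$. The integrability needed is $\E|F_t(s)|\le\E|\ctt|<\infty$, which holds by \refL{L3a} (the factor $e^{-t\Re\gf(s)}$ is a finite constant for fixed $t$); and $t\mapsto M_t(s)$ is right-continuous since $F_t(s)$ is piecewise constant and right-continuous. Equivalently, one may read the martingale property off \eqref{stoc}, where $\dd M_t(s)=e^{-t\gf(s)}\dd M'_t$ and $M'_t$ is a genuine martingale on every finite interval because $|F_t(s)|\le|\ctt|\in L^2$ by \refL{L3a}.

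For the $L^2$-bound, fix $s\in J$. Since $\eta$ is real-valued, $\overline{F_t(s)}=\sum_{v\in\ctt}e^{-\ii sX_v}=F_t(-s)$ and $\gf(-s)=\overline{\gf(s)}$, so $\gf(s)+\gf(-s)=2\Re\gf(s)=:a$ is real and, using $\Re\gf\ge\tfrac34$ on $J$, satisfies $a\ge\tfrac32$ (and $a\le2$ since $|\gf|\le1$). Hence
\[
\E|M_t(s)|^2=\bigabs{e^{-t\gf(s)}}^2\,\E\bigpar{F_t(s)\overline{F_t(s)}}=e^{-ta}\,\E\bigpar{F_t(s)F_t(-s)}.
\]
Applying \eqref{EFF} with $s_1=s$ and $s_2=-s$, both in $J$, so that $s_1+s_2=0$ and $\gf(s_1+s_2)=\gf(0)=1$, the formula collapses to $\E\bigpar{F_t(s)F_t(-s)}=\frac{a}{a-1}e^{ta}-\frac{1}{a-1}e^{t}$, and therefore
\[
\E|M_t(s)|^2=\frac{a}{a-1}-\frac{e^{t(1-a)}}{a-1}\le\frac{a}{a-1}=1+\frac1{a-1}\le 1+\frac1{1/2}=3,
\]
where we used $a-1\ge\tfrac12>0$ and $1-a\le-\tfrac12<0$, so that $e^{t(1-a)}\le1$ for all $t\ge0$. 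This is \eqref{lm} with $C=3$.

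I do not anticipate a genuine obstacle here: once \refL{L1} is available, both parts are short, and the $L^2$ computation is essentially forced. The one point that deserves care is making the martingale identity fully rigorous — either by spelling out the branching/Markov decomposition of $F_t$ across time $u$ together with the resulting conditional independence, or, on the stochastic-calculus route, by verifying that the local martingale $M'_t$ of \eqref{stoc} is a true martingale — and in both cases the required integrability is supplied by \refL{L3a}. For a general root weight $\rho$ the same computation goes through verbatim, using the versions of \eqref{EF}--\eqref{EFF} recorded in \refR{Rrho}.
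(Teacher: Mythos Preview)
Your proof is correct and follows essentially the same approach as the paper. For the martingale property the paper is terser (it simply cites \eqref{stoc}), while you spell out the branching decomposition as well; for the $L^2$-bound both arguments apply \eqref{EFF} with $s_1=s$, $s_2=-s$ and use $\overline{M_t(s)}=M_t(-s)$, the only difference being that the paper first records the general two-parameter formula \eqref{OFF} (needed again later for \refL{LMWWJ}) and then bounds it by $6$, whereas you specialize immediately, exploit $\gf(0)=1$, and obtain the sharper constant $C=3$.
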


\begin{proof}
The first part is standard: it follows 
from \eqref{stoc} above that
$e^{-\gf(s)t}F_t(s)$ is a martingale.

Furthermore, \eqref{M} and \eqref{EFF} yield, for $s_1,s_2\in J$,
    \begin{equation}\label{OFF}
      \begin{split}
      \E \bigpar{M_t(s_1)M_t(s_2)} 
&= 
\frac{\gf(s_1)+\gf(s_2)}{\gf(s_1)+\gf(s_2)-\gf(s_1+s_2)}
\\&\qquad{}-
\frac{\gf(s_1+s_2)}{\gf(s_1)+\gf(s_2)-\gf(s_1+s_2)}
e^{t(\gf(s_1+s_2)-\gf(s_1)-\gf(s_2))}
.
      \end{split}
    \end{equation}
Using \eqref{boss}, 
this yields the estimate, still for $s_1,s_2\in J$ and all $t\ge0$,
    \begin{equation}\label{YFF}
      \begin{split}
\bigabs{      \E \bigpar{M_t(s_1)M_t(s_2)} }
&\le 
2\bigabs{\gf(s_1)+\gf(s_2)}+
2\bigabs{\gf(s_1+s_2)}
\le 6
.
      \end{split}
    \end{equation}
Furthermore, $F_t(-s)=\overline{F_t(s)}$ and thus
$M_t(-s)=\overline{M_t(s)}$;
hence \eqref{lm} follows by taking $s_1=-s_2=s$ in \eqref{YFF}.
\end{proof}

\begin{remark}
The proof shows also that, typically, $M_t(s)$ is not $L^2$-bounded for every
real $s$, see \eqref{OFF}, which holds as soon as the denominator 
$\gf(s_1)+\gf(s_2)-\gf(s_1+s_2)\neq0$.
Hence it is necessary to restrict to some
interval $J$.
(Our choice of $J$ is not the largest possible, 
but it is convenient for our purposes.)
\end{remark}

\refL{LM} implies that for every fixed $s\in J$,
the martingale $M_t(s)$ converges a.s.
A crucial step is to improve this to uniform convergence for all $s\in J$,
i.e., \as{} convergence of $M_t$ as an element of the Banach space $C(J)$.
However, we shall not work in $C(J)$, since we find it difficult to estimate 
the first or second moment of
$\norm{M_t}_{C(J)}=\sup_{s\in J}|M_t(s)|$ directly; another technical
problem is that $C(J)$ does not have the \RNP{} (see below).
Instead we use the Sobolev space $\WWJ$ defined by
\begin{equation}\label{WWJ}
  \WWJ:=\bigset{f\in L^2(J):f'\in L^2(J)},
\end{equation}
with the norm
\begin{equation}\label{normWWJ}
  \normWWJ{f}^2:=\norm{f}_{L^2(J)}^2+\norm{f'}_{L^2(J)}^2.
\end{equation}

\begin{remark}
  The definition \eqref{WWJ}
has to be interpreted with some care, since $f$ in general
  is not differentiable everywhere.
The general definition of Sobolev spaces in several variables 
\cite{Adams,Mazja}
uses
distributional (weak) derivatives. In the present one-variable case,
we can just require that $f$ is absolutely continuous on $J$, 
so that $f'$ exists
a.e.\ in $J$ in the usual sense, and then assume $f,f'\in L^2(J)$.
Equivalently, 
$\WWJ=\bigset{f:f(x)=f(0)+\int_0^x g(y)\dd y \text{ for some }g\in L^2(J)}$.
\end{remark}

Then $\WWJ$ is an Hilbert space.
Furthermore, there is a continuous inclusion $\WWJ\subset C(J)$, 
and thus an estimate
\begin{equation}\label{kg}
  \norm{f}_{C(J)}\le C \normWWJ{f},
\qquad f\in\WWJ.
\end{equation}
This is a special case of the Sobolev embedding theorem
\cite[Theorem 5.4]{Adams}; in the present
(one-variable) case, it is an easy consequence of the \CSineq, which implies
that if
$f\in \WWJ$ and $[a,b]\subseteq J$, then
\begin{equation}\label{cc}
  |f(b)-f(a)|\le \int_a^b|f'(x)|\dd x
\le (b-a)\qq \norm{f'}_{L^2[a,b]}
\le (b-a)\qq\normWWJ{f}
.
\end{equation}
Hence $f\in C(J)$, and \eqref{kg} follows easily from \eqref{cc}.

The (random) function $F_t$ defined in \eqref{F} is an infinitely differentiable
function of $s$. Furthermore, since we assume $\E|\eta|^2<\infty$, the
\chf{} $\gf(s)$ is twice continuously differentiable; hence so is $\E
F_t(s)$ by \eqref{EF} and $M_t(s)$ by \eqref{M}. In particular, 
$M_t\in\WWJ$ for every $t\ge0$.

\begin{lemma}\label{LMWWJ}
  $(M_t)_{t\ge0}$ is a right-continuous $L^2$-bounded martingale in $\WWJ$.
\end{lemma}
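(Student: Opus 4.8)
The plan is to verify the three asserted properties — the martingale property in $\WWJ$, $L^2$-boundedness in $\WWJ$, and right-continuity — one at a time, reducing everything to the pointwise-in-$s$ statements already established in Lemmas \ref{L1} and \ref{LM} together with differentiation in $s$.

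First I would treat the martingale property. We already know from \refL{LM} that for each fixed $s\in J$ the scalar process $M_t(s)$ is a martingale. To upgrade this to a martingale taking values in the Hilbert space $\WWJ$, I would check that $M_t\in\WWJ$ for each $t$ (this is already observed just before the lemma, since $\gf$ is $C^2$ and $F_t$ is $C^\infty$ in $s$), that $\E\normWWJ{M_t}<\infty$ (which follows from the $L^2$-bound below and Cauchy--Schwarz), and that $\E(M_t\mid\cF_r)=M_r$ as an identity in $\WWJ$. The last point is the standard fact that a scalar-wise martingale that is suitably integrable is a Bochner martingale; concretely, for any bounded $\cF_r$-measurable event and any test functional on $\WWJ$ one reduces to the scalar statement by Fubini, the integrability needed for Fubini being exactly the $L^2$-bound. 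Alternatively one can note that evaluation $f\mapsto f(s)$ and $f\mapsto f'(s)$ are continuous linear functionals on $\WWJ$ (by \eqref{kg} and \eqref{cc}), and that $\partial_s M_t(s)$ is again a martingale for each fixed $s$ (differentiate \eqref{stoc}, or differentiate the relation $M_t(s)=e^{-t\gf(s)}F_t(s)$ and use that $F_t'(s)=\sum_{v}\ii X_v e^{\ii s X_v}$ satisfies an analogous stochastic equation), so that both $M_t$ and its derivative are scalar-wise martingales; this identifies the conditional expectation in $L^2(J)\times L^2(J)$.

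Next the $L^2$-bound in $\WWJ$. By \eqref{normWWJ} it suffices to bound $\E\norm{M_t}_{L^2(J)}^2$ and $\E\norm{M_t'}_{L^2(J)}^2$ uniformly in $t$. For the first, Fubini gives $\E\norm{M_t}_{L^2(J)}^2=\int_J\E|M_t(s)|^2\dd s\le C|J|$ by \eqref{lm}. For the second, I would first differentiate: $M_t'(s)=-t\gf'(s)e^{-t\gf(s)}F_t(s)+e^{-t\gf(s)}F_t'(s)$, so $\E|M_t'(s)|^2$ is a sum of three terms, each of which I would compute exactly by the same Poisson-calculus argument as in \refL{L1}\ref{L1FF}, applied now to the mixed products $F_t(s)F_t'(s)$, $F_t'(s)F_t'(s)$, etc. (equivalently, differentiate \eqref{EFF} in $s_1$ and $s_2$). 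The point — and here is where I expect the main work to be — is that each such expectation, after dividing by $|\E F_t(s)|^2=|e^{t\gf(s)}|^2$, is a rational expression in $\gf,\gf',\gf''$ times either a bounded exponential or a polynomial-in-$t$ times a decaying exponential $e^{t(\Re\gf(2s)-2\Re\gf(s))}$, with denominator bounded away from $0$ on $J$ by \eqref{boss}. Since $\Re\gf(2s)-2\Re\gf(s)<0$ on $J\setminus\{0\}$ and $\sup_{t\ge0}t^k e^{-ct}<\infty$ for any $c>0$, every term is bounded uniformly in $t$ and $s\in J$; the factor $t$ from differentiating $e^{-t\gf(s)}$ is exactly absorbed by this exponential decay. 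Hence $\E|M_t'(s)|^2\le C$ on $J$, and integrating over $J$ gives $\E\norm{M_t'}_{L^2(J)}^2\le C$. Combining, $\E\normWWJ{M_t}^2\le C$.

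Finally right-continuity. The process $F_t$ is piecewise constant in $t$ between the (a.s.\ discrete) birth times of the Yule tree and jumps only at those times, so $t\mapsto F_t$ is right-continuous (indeed càdlàg) as a map into $\WWJ$ — at a birth time $F$ increases by the single smooth function $s\mapsto e^{\ii s X_w}$, which lies in $\WWJ$. Since $t\mapsto e^{-t\gf(s)}$ is (for each $s$, uniformly on the compact $J$, together with its $s$-derivative) continuous in $t$ in $\WWJ$-relevant norms, $M_t=e^{-t\gf(\cdot)}F_t$ is right-continuous in $\WWJ$ as well. The only subtlety is that one should note the countably many jump times accumulate only at $\infty$ a.s.\ (a standard property of the Yule process), so there is no pathology; this, with \refL{L3a} to guarantee local integrability, completes the verification. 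The main obstacle is thus the bookkeeping in the second-moment computation for $M_t'$, i.e.\ differentiating \eqref{EFF} and checking that the $t$-prefactors are dominated by the exponential decay coming from \eqref{boss}; everything else is routine.
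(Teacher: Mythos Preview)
Your plan is essentially the paper's proof: split $\E\normWWJ{M_t}^2$ via Fubini into $\int_J\E|M_t(s)|^2\dd s$ (handled by \eqref{lm}) and $\int_J\E|\partial_s M_t(s)|^2\dd s$, control the latter by differentiating the explicit second-moment formula \eqref{OFF} and using \eqref{boss} so that the polynomial-in-$t$ prefactors are absorbed by exponential decay, deduce the $\WWJ$-martingale property from the scalar one via continuity of point evaluations on $\WWJ$, and read off right-continuity from that of $F_t$.

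Two small points. First, a slip: for $\E|M_t'(s)|^2$ one evaluates at $(s_1,s_2)=(s,-s)$ (since $\overline{M_t(s)}=M_t(-s)$), so the decaying factor is $e^{t(1-2\Re\gf(s))}$, not $e^{t(\Re\gf(2s)-2\Re\gf(s))}$; \eqref{boss} still bounds the exponent by $-1/2$, so the argument is unaffected. Second, the paper explicitly justifies interchanging $\E$ with $\partial_{s_1}\partial_{s_2}$ using the crude bounds $|F_t(s)|\le|\ctt|$, $|\partial_s F_t(s)|\le\sum_v|X_v|$ together with Lemmas \ref{L3a}--\ref{L3b}; you gesture at ``differentiate \eqref{EFF}'' without this check, so be sure to include it.
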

\begin{proof}
We begin by estimating the norm.  
By \eqref{normWWJ} and Fubini's theorem,
\begin{equation}\label{emma}
  \begin{split}
\E  \normWWJ{M_t}^2
&=
\E  \normLLJ{M_t}^2 
+  \E\normLLJ{M_t'}^2
\\&
=\E\int_J |M_t(s)|^2\dd s + \E\int_J \Bigabs{\dds M_t(s)}^2\dd s
\\&
=\int_J \E|M_t(s)|^2\dd s + \int_J \E\Bigabs{\dds M_t(s)}^2\dd s
.
  \end{split}
\end{equation}
We know that $\E|M_t(s)|^2$ is bounded by \eqref{lm}, but it remains to
estimate the last integral.

Denote the \rhs{} of \eqref{OFF} by $h(s_1,s_2;t)$, so 
$\E\bigpar{M_t(s_1)M_t(s_2)}=h(s_1,s_2;t)$ when $s_1,s_2\in J$.
Taking partial derivatives with respect to both $s_1$ and $s_2$, we obtain,
for $s_1,s_2\in \JO$, the interior of $J$,
\begin{equation}\label{mad}
  \frac{\partial^2}{\partial s_1\partial s_2}
\E\bigpar{M_t(s_1)M_t(s_2)}
=
  \frac{\partial^2}{\partial s_1\partial s_2} h(s_1,s_2;t),
%\qquad s_1,s_2\in \JO,
\end{equation}
where the \rhs{} exists because 
\eqref{boss} holds and,
as said above,
$\gf$ is
twice continuously
differentiable; furthermore, this implies, using the explicit form of
$h(s_1,s_2;t)$ in \eqref{OFF},
\begin{equation}\label{mah}
  \begin{split}
  \frac{\partial^2}{\partial s_1\partial s_2} h(s_1,s_2;t)
&= O \Bigpar{1+(1+t^2) e^{t\Re(\gf(s_1+s_2)-\gf(s_1)-\gf(s_2))}}
\\&
= O \Bigpar{1+(1+t^2) e^{-t/2}}
=O(1),
\qquad s_1,s_2\in \JO.
  \end{split}
\end{equation}
In the \lhs{} of \eqref{mad} we interchange the order of differentiation
and expectation. To justify this, we note first that
by \eqref{F}, $|F_t(s)|\le |\ctt|$ and
$|\dds F_t(s)|\le\sum_{v\in \ctt} |X_v|$. Hence, using \eqref{M} and 
Lemmas \ref{L3a} and \ref{L3b}, and
letting $C_t$ denote constants that may depend on $t$ but not on $s_1,s_2$,
\begin{equation}
  \begin{split}
  &  \E \Bigabs{ \frac{\partial^2}{\partial s_1\partial s_2}
\bigpar{M_t(s_1)M_t(s_2)}}
=   \E \Bigabs{ \frac{\partial M_t(s_1)}{\partial s_1}
\frac{\partial M_t(s_2)}{\partial s_2}}
\\&\qquad
\le
C_t\E\Bigpar{\Bigpar{|F_t(s_1)|+\Bigabs{\dds F_t(s_1)}}
\Bigpar{|F_t(s_2)|+\Bigabs{\dds F_t(s_2)}}
  }
\\&\qquad
\le  C_t \E \Bigpar{ |\ctt| +\sum_{v\in \ctt} |X_v|}^2
\le C_t\E|\ctt|^2+  C_t \E \Bigpar{ \sum_{v\in \ctt} |X_v|}^2
\le
C_t<\infty.
  \end{split}
\end{equation}
Consequently, if $[a,b]$ and $[c,d]$ are any two subintervals of $\JO$, then
Fubini's theorem yields
%\begin{equation}
%  \begin{split}
\begin{align}
    \int_a^b\int_c^d  &
\E\Bigpar{ \frac{\partial M_t(s_1)}{\partial s_1}
\frac{\partial M_t(s_2)}{\partial s_2}
}
\dd s_1\dd s_2
=
  \E 
\int_a^b\int_c^d  
{ \frac{\partial M_t(s_1)}{\partial s_1}
\frac{\partial M_t(s_2)}{\partial s_2}}
\dd s_1\dd s_2
\notag\\&
=\E\bigpar{\bigpar{M_t(b)-M_t(a)}\bigpar{M_t(d)-M_t(c)}}
\notag\\&
=h(b,d;t)-h(a,d;t)-h(b,c;t)+h(a,c;t)
\end{align}
%  \end{split}
%\end{equation}
and differentiation (with respect to $b$ and $d$) yields 
the desired formula
\begin{align}\label{mak}
\E\Bigpar{ \frac{\partial M_t(s_1)}{\partial s_1}
\frac{\partial M_t(s_2)}{\partial s_2}
}
=
  \frac{\partial^2h(s_1,s_2;t)}{\partial s_1\partial s_2} ,
\qquad s_1,s_2\in \JO.
\end{align}
Together with \eqref{mah}, this shows, for $s\in \JO$,
\begin{align}
\E\Bigabs{ \frac{\partial M_t(s)}{\partial s}}^2
&=
-\E\Bigpar{ \frac{\partial M_t}{\partial s}(s)
\frac{\partial M_t}{\partial s}(-s)
}
=
-  \frac{\partial^2h(s_1,s_2;t)}{\partial s_1\partial  s_2}\Bigm|_{s_1=s,s_2=-s} 
\notag\\&
=O(1).
\label{mal}
\end{align}
Finally, we use \eqref{mal} together with \eqref{lm} in \eqref{emma}, and
find
\begin{equation}\label{cup}
  \E \normWWJ{M_t}^2 \le C,
\qquad t\ge0.
\end{equation}
In other words, $\set{M_t}$ is an $L^2$-bounded family of random variables
in $\WWJ$. 

In particular, each $M_t$ is integrable, and thus the conditional
expectation
$\E(M_t\mid \cF_u)$ is defined for every $u\le t$.
Point evaluations are continuous linear functionals on $\WWJ$ by
\eqref{kg}. Hence, if $0\le u\le t$ and $s\in J$,
then, using also that $M_t(s)$ is a martingale by \refL{LM},
\begin{equation}
\E(M_t\mid \cF_u)(s)
= \E(M_t(s)\mid \cF_u)
=M_u(s).
\end{equation}
Consequently, $\E(M_t\mid \cF_u)=M_u$, and thus $M_t$, $t\ge0$, is a
martingale with values in $\WWJ$.
We have shown $L^2$-boundedness in \eqref{cup}.
Finally, $t\mapsto M_t$ is right-continuous by the definition \eqref{M},
since $F_t$ is a right-continuous step function.
\end{proof}

\begin{lemma}
  \label{LC}
There exists a random function $M_\infty\in\WWJ\subset C(J)$ such that
$M_t\asto M_\infty$ in $\WWJ$ as \ttoo.
\end{lemma}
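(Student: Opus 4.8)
The plan is to obtain \refL{LC} directly from \refL{LMWWJ} by a martingale convergence argument in the Hilbert space $\WWJ$. The space $\WWJ$ is a separable Hilbert space, hence reflexive, and therefore has the \RNP; the general fact (a form of Chatterji's theorem, in the continuous-time version obtained by passing through dyadic times and using right-continuity) is that in a Banach space with the \RNP{} every right-continuous $L^1$-bounded martingale converges almost surely in norm. By \refL{LMWWJ}, $(M_t)_{t\ge0}$ is a right-continuous, $L^2$-bounded — in particular $L^1$-bounded — $\WWJ$-valued martingale, so it follows that $M_t$ converges a.s.\ in $\WWJ$ as \ttoo{} to some random $M_\infty\in\WWJ$. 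Since the inclusion $\WWJ\subset C(J)$ is continuous by \eqref{kg}, this gives $M_\infty\in C(J)$ (and also that $M_t\to M_\infty$ uniformly on $J$, a.s.), which is the assertion.

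One can also avoid the abstract theorem and argue by hand, using only the Hilbert structure, and I would include this as an alternative. By the martingale property, increments of $M_t$ over disjoint time intervals are orthogonal in $L^2(\Omega;\WWJ)$, so $\E\normWWJ{M_t}^2=\E\normWWJ{M_s}^2+\E\normWWJ{M_t-M_s}^2$ for $s\le t$; hence $t\mapsto\E\normWWJ{M_t}^2$ is nondecreasing and, by \eqref{cup}, bounded, with some limit $\ell$. Then $\E\normWWJ{M_t-M_s}^2\to0$ as $s,t\to\infty$, so $(M_t)$ is Cauchy in $L^2(\Omega;\WWJ)$ and converges there to some $M_\infty$ with $M_u=\E(M_\infty\mid\cF_u)$. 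For the a.s.\ statement, fix $T$; then $(M_t-M_T)_{t\ge T}$ is a right-continuous $\WWJ$-valued martingale, $\normWWJ{M_t-M_T}$ is a nonnegative submartingale, and Doob's $L^2$ maximal inequality together with monotone convergence gives $\E\sup_{t\ge T}\normWWJ{M_t-M_T}^2\le4\bigpar{\ell-\E\normWWJ{M_T}^2}$. Combining this with the triangle inequality and the $L^2$ convergence $M_T\to M_\infty$ yields $\E\bigsqpar{\limsup_{t\to\infty}\normWWJ{M_t-M_\infty}^2}\le8\bigpar{\ell-\E\normWWJ{M_T}^2}+2\E\normWWJ{M_T-M_\infty}^2$, whose right-hand side vanishes as $T\to\infty$; hence $M_t\asto M_\infty$ in $\WWJ$, and then $M_\infty\in\WWJ\subset C(J)$ as above.

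There is no genuine obstacle at this step: all the analytic work has been carried out in Lemmas \ref{L1}--\ref{LMWWJ}, and what remains is a routine application of martingale convergence. If one must single out the delicate point, it is that we need norm (equivalently, by \eqref{kg}, uniform) a.s.\ convergence, not merely weak or pointwise-in-$s$ convergence — coordinatewise martingale convergence plus the $L^2$-bound from \eqref{cup} would only give weak a.s.\ convergence. This is exactly why the argument is run in the Hilbert space $\WWJ$ (which has the \RNP{} and an $L^2$ norm amenable to the second-moment estimate \eqref{cup}) rather than in $C(J)$; the only bookkeeping needed is the discrete-to-continuous time passage, covered by the right-continuity of $t\mapsto M_t$, and the Bochner measurability of the $\WWJ$-valued random variables $M_t$, which holds since $\WWJ$ is separable and $F_t$ depends continuously on the finitely many data defining it.
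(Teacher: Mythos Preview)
Your primary argument is essentially the paper's own proof: the paper also observes that $\WWJ$ is a Hilbert space and hence has the \RNP, invokes the Banach-space martingale convergence theorem (citing Pisier rather than Chatterji), and handles the discrete-to-continuous passage via right-continuity. Your additional ``by hand'' Hilbert-space argument using orthogonality of increments and Doob's maximal inequality is correct and is a nice self-contained alternative that the paper does not give; it trades the abstract \RNP{} reference for a direct computation, at the cost of a few more lines.
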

\begin{proof}
  This follows from \refL{LMWWJ} since $\WWJ$ is a Hilbert space and thus
  has the \RNP, see \cite[Theorem 2.9 and Corollary 2.15]{Pisier}, using 
\cite[Theorem 1.49]{Pisier} to extend the result from the discrete-parameter
martingale $(M_n)_{n\in\bbN}$ to the continuous-parameter $(M_t)_{t\ge0}$.
\end{proof}

\begin{lemma}
  \label{LCC}
As \ttoo, $M_t\asto M_\infty$ in $C(J)$, \ie, \as{} $M_t(s)\to M_\infty(s)$
uniformly in $s\in J$.
\end{lemma}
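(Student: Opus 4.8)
The plan is to obtain this directly from \refL{LC} by combining the a.s.\ convergence in the Sobolev norm with the continuous embedding $\WWJ\subset C(J)$; no new probabilistic input is needed.

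First I would recall that, as noted just before \refL{LMWWJ}, $M_t\in\WWJ$ for every $t\ge0$, and that \refL{LC} provides a random $M_\infty\in\WWJ$ with $\normWWJ{M_t-M_\infty}\asto0$ as \ttoo. In particular $M_t-M_\infty\in\WWJ$ for each $t$. Then I would apply the Sobolev embedding estimate \eqref{kg} to $f=M_t-M_\infty$, obtaining
\[
  \norm{M_t-M_\infty}_{C(J)}\le C\,\normWWJ{M_t-M_\infty}
\]
for every $t\ge0$. Since this is a deterministic inequality that holds for every realization, on the probability-one event where $\normWWJ{M_t-M_\infty}\to0$ we also get $\norm{M_t-M_\infty}_{C(J)}\to0$, which is exactly the claimed uniform convergence $M_t(s)\to M_\infty(s)$ on $J$.

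There is no real obstacle in this step — it is just composing an a.s.\ limit with a bounded linear inclusion — so the substance has all been done in Lemmas~\ref{LMWWJ}--\ref{LC}. The only point worth a remark is that no exceptional null set is added, so the $C(J)$-limit coincides with the $\WWJ$-limit $M_\infty$ (consistent with the pointwise a.s.\ limits of $M_t(s)$ furnished by \refL{LM}).
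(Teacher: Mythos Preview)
Your proof is correct and follows exactly the paper's approach: the paper's own proof is simply ``An immediate consequence of \refL{LC} and \eqref{kg},'' and you have merely spelled this out.
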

\begin{proof}
An immediate consequence of \refL{LC} and \eqref{kg}.
\end{proof}

\begin{lemma}\label{L0}
  $M_\infty(0)>0$ a.s.
\end{lemma}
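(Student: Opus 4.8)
The plan is to identify the quantity $M_\infty(0)$ with the classical martingale limit $W$ from branching process theory, which is strictly positive by construction.

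First I would evaluate everything at $s=0$. Since $\gf(0)=\E e^{\ii\cdot 0\cdot\eta}=1$, formula \eqref{EF} gives $\E F_t(0)=e^{t\gf(0)}=e^{t}$, while $F_t(0)=\sum_{v\in\ctt}e^{\ii\cdot 0\cdot X_v}=|\ctt|$ as already noted after \eqref{F}. Hence, by the definition \eqref{M},
\begin{equation*}
  M_t(0)=\frac{F_t(0)}{\E F_t(0)}=\frac{|\ctt|}{e^{t}}.
\end{equation*}
(In the weighted case $|\ctt|$ denotes the total weight, and $\E F_t(0)=e^{t\gf(0)}=e^t$ still holds with $\rho=1$; for general $\rho$ one uses the version of \eqref{EF} with the initial value $F_0(0)=\rho$, giving $M_t(0)=|\ctt|/(\rho e^{t})$, which does not affect positivity.)

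Next I would invoke the branching-process fact \eqref{winston}: $|\ctt|/e^{t}\asto W$ for some random variable $W>0$ a.s.\ (this was stated for the Yule tree and noted to extend to the weighted Yule tree, with $W\sim\Gamma(\rho)$). Thus $M_t(0)\asto W>0$. On the other hand, \refL{LCC} gives $M_t\asto M_\infty$ in $C(J)$, and since point evaluation at $0\in J$ is continuous on $C(J)$, this yields $M_t(0)\asto M_\infty(0)$. Two a.s.\ limits of the same sequence agree a.s., so $M_\infty(0)=W>0$ a.s.

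There is essentially no obstacle here: the entire content is the observation that $M_t(0)$ is exactly the renormalized population size $|\ctt|/e^{t}$, for which a.s.\ convergence to a strictly positive limit is the standard result \eqref{winston}. The only point requiring a word of care is matching the two notions of limit — the Hilbert/Banach-space limit $M_\infty$ from \refL{LC}–\refL{LCC} and the pointwise a.s.\ limit of $M_t(0)$ — which is immediate from the continuity of evaluation functionals on $C(J)$ already used in the proof of \refL{LMWWJ}.
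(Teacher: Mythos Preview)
Your proof is correct and follows essentially the same approach as the paper: both compute $M_t(0)=e^{-t}|\ctt|$ and invoke \eqref{winston} to conclude that the limit is $W>0$ a.s. Your extra remark about matching the $C(J)$-limit from \refL{LCC} with the pointwise limit is a valid clarification that the paper leaves implicit.
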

\begin{proof}
  By \eqref{M} and \eqref{F},
  \begin{equation}
M_t(0)=e^{-t}F_t(0)= e^{-t}|\ctt|, 
  \end{equation}
which \as{} converges to the strictly positive limit $W$ by \eqref{winston}.
\end{proof}

\begin{remark}
  \label{Rrho}
We have so far assumed that $\rho=1$.  
The results easily extend to 
general $\rho>0$, provided we, as in Sections \ref{Stree1}--\ref{Sbranch},
count the nodes according to their weights $\ww_v$ and
thus change \eqref{F} to
\begin{equation}
  \label{Frho}
F_t(s):=\sum_{v\in\ctt}\ww_ve^{\ii s X_v}=\rho+\sum_{v\neq o}e^{\ii sX_v}.
\end{equation}
Recall that $|\ctt|$ now is the total weight; thus $F_t(0)=|\ctt|$ still holds. 
%(and $F_t$ is the Fourier transform of $\mu_t$). 
Similarly, recalling \eqref{bmuw}, the \chf{} of $\bmu_t$ is still
$F_t(s)/F_t(0)$.
In the proof of \refL{L1}, we then obtain the same differential
equations  \eqref{er66} and \eqref{erII}, % (modifying the intermediate steps),
but the initial condition is now $F_0(s)=\rho$, giving
\begin{align}\label{jb}
  \E F_t(s)&=\rho e^{t\gf(s)}
\end{align}
and
\begin{align}\label{RFF}
      \E \bigpar{F_t(s_1)F_t(s_2)} 
= 
%\E F_t(s_1)\E F_t(s_2)
\rho^2 e^{t(\gf(s_1)+\gf(s_2))}
%\\&\qquad{}
&+\rho
\frac{\gf(s_1+s_2)}{\gf(s_1)+\gf(s_2)-\gf(s_1+s_2)}
\times
\notag\\&\quad
\bigpar{e^{t(\gf(s_1)+\gf(s_2))}-e^{t\gf(s_1+s_2)}}
.
     \end{align}
We define $M_t(s):=F_t(s)/\E F_t(s)=\rho\qw e^{-t\gf(s)}F_t(s)$
and obtain again the estimate \eqref{lm}. The rest of the proofs above
holds without changes; in particular, Lemmas \ref{LMWWJ}--\ref{L0} hold as
stated for any $\rho>0$.
\end{remark}

\begin{proof}[Proof of \refT{TYule}]
We first continue to assume $d=1$, but allow $\rho>0$ to be arbitrary, see
\refR{Rrho}. 

Let $\bmu_t$ be the empirical distribution \eqref{bmu} for the Yule tree
$\ctt$ at time $t\ge0$.
Denote the \chf{} of $\bmu_t$ by $\hbmu_t$. Then, by \eqref{bmu},
\eqref{F} and \eqref{M} when $\rho=1$, 
and their modifications \eqref{bmuw} and \eqref{Frho} in general,
using $\gf(0)=1$,
\begin{equation}\label{fle}
  \hbmu_t(s)=
\frac{1}{|\ctt|}\sum_{v\in \ctt}\ww_v e^{\ii s X_v}
= \frac{F_t(s)}{F_t(0)}
= \frac{M_t(s)}{M_t(0)}e^{t(\gf(s)-1)}.
\end{equation}
If $s(t)$ is a function of $t$ such that
$s(t)\to0$ as \ttoo, then the uniform convergence in \refL{LCC} together
with the continuity of $M_\infty$ implies that 
$M_t(s(t))\asto M_\infty(0)$, and thus, using also \refL{L0},
$M_t(s(t))/M_t(0)\asto M_\infty(0)/M_\infty(0)=1$.
Consequently, \eqref{fle} shows that \as{}, as \ttoo,
\begin{equation}\label{flex}
  \hbmu_t(s(t))=\bigpar{1+o(1)}e^{t(\gf(s(t))-1)}
=e^{t(\gf(s(t))-1)+o(1)}.
\end{equation}
Now consider the rescaled measure $\Thetaab(\bmu_t)$, where we take 
$a=a(t):=\sqrt t$
and $b=b(t):=\bm t$. 
By \eqref{Thetabmu}, its \chf{} is given by
\begin{equation}\label{kai}
  \widehat{\Thetaab(\bmu_t)}(s)
=\frac{1}{|\ctt|}\sum_{v\in\ctt}e^{\ii s(X_v-b)/a}
=e^{-\ii sb/a}\hbmu_t(s/a).
\end{equation}
For any fixed $s\in\bbR$, $s/a=s/\tqq\to0$ as \ttoo, and thus \eqref{flex}
applies with $s(t):=s/a=s/\tqq$; hence \eqref{kai} yields, \as,
\begin{equation}\label{kais}
  \widehat{\Thetaab(\bmu_t)}(s)
=e^{-\ii sb/a}
 e^{t(\gf(s/a)-1)+o(1)}
=e^{t(\gf(s/\tqq)-1-\ii  \bm s/\tqq)+o(1)}.
\end{equation}
Furthermore, $\gf(s/\tqq)=1+\ii\bm s/\tqq-\frac12 \E[\eta^2]s^2/t+o(t\qw)$,
and thus a.s., as \ttoo,
\begin{equation}\label{kaisa}
  \widehat{\Thetaab(\bmu_t)}(s)
=e^{-\frac12\E[\eta^2]s^2+o(1)}
\to e^{-\frac12\E[\eta^2]s^2}.
\end{equation}
This shows that for each fixed $s$, the characteristic function of
$\Thetaab(\bmu_t)$ converges \as{} to the \chf{} of $N(0,\bm^2+\gss)$.

Consider now an arbitrary $d\ge1$.
For any fixed $u\in\bbR^d$, consider the linear projections $u\tr X_v\in\bbR$,
which are obtained as in \eqref{X} from the offsets $u\tr\eta_v$.
Applying \eqref{kaisa} (with $s=1$) to these variables yields
\begin{equation}\label{kaisb}
  \widehat{\Thetaab(\bmu_t)}(u)
\asto e^{-\frac12\E[(u\tr\eta)^2]}
= e^{-\frac12 u\tr\E[\eta\eta\tr]u}
\end{equation}
for every fixed $u\in\bbR^d$.
This implies that $\Thetaab(\bmu_t)\dto N(\bigpar{0,\E[\eta\eta\tr]}$ \as,
see \cite{BertiEtAl}.
\end{proof}

\begin{remark}\label{Rpoisson}
  The final part of the proof, from \eqref{flex}, is very similar to
  standard proofs of the central limit theorem, and we can interpret
  \eqref{flex} as showing that $\bmu_t$ asymptotically is like the
  distribution of a sum of independent copies of $\eta$; note that
  \eqref{flex} says that $\hbmu_t(s)\approx e^{t(\gf(s)-1)}$, which is the
  \chf{} of a sum of a random $\Po(t)$ number of independent copies of $\eta$.
\end{remark}

\begin{remark}
  \label{Rspaces}
The main idea in the proof above, as in many other related works, including
\cite{MM},  is to obtain 
uniform convergence of certain random functions in some interval $J\ni0$,
\ie, convergence in $C(J)$,
since this allows us to obtain convergence for an argument $s=s(t)$
depending on $t$. (\refL{LCC}.)
Pointwise convergence \as{} to a random function follows in our cases, and
in many related problems, from the martingale limit theorem.
One method to improve this to uniform convergence 
goes back 
to \citet{JoffeLecamNeveu};
the idea is to use the Kolmogorov continuity criterion 
\cite[Theorem 3.23]{Kallenberg}
to show that the limit can be taken as a continuous function;
then uniform convergence follows by the martingale convergence theorem in
a space of continuous functions.
We have here chosen a slightly different method; 
we use a Sobolev space $\WWJ$ and show that the martingale is bounded there;
then both existence of the limit and uniform convergence follows.
Nevertheless the methods are quite similar; the first requires estimates of
moments of differences while the second requires estimates of moments of
derivatives, and the required estimates are similar. Hence, for practical
applications, the two methods seem to be essentially equivalent.

\citet{Uchiyama} uses pointwise convergence of random functions; he does not
explicitly show uniform convergence, but he too uses estimates on moments of
differences, in a way which  seems related. 

Another method to obtain uniform continuity, see
\citet{Biggins-uniform1d,Biggins-uniform},
assumes that the random
functions are analytic functions in an open domain in the complex plane.  
Then pointwise estimates of moments yield automatically (by
Cauchy's estimates)
uniform estimates  of the functions and their derivatives on compact sets,
and thus uniform convergence on compact subsets.
(It may be convenient to use the Bergman space of square
integrable analytic functions in a suitable domain,
\cf{}  \cite{SJ194}.) 
This is the method used by \citet{MM} for the \Polya{} urns and \rrt{s}
discussed in the present paper; it is elegant but it requires in our case
exponential moments of the offset distribution so that its
 \chf{} can be extended to an
analytic functions in a complex domain.

\citet{ChauvinDJ} (for binary trees) use a combination of both the
Kolmogorov criterion
and properties of analytic functions.

Note that also when the offset is vector-valued and takes values in $\bbR^d$
with $d>1$, we consider one-dimensional projections and use a
one-dimensional Sobolev space.
We may define $M_t(\vecs)$ as above for $\vecs\in\bbR^d$ and show, by the
same arguments, that
$M_t(\vecs)$ is a $L^2$-bounded martingale in the Sobolev space $\WW(B)$,
for a small ball $B\subset\bbR^d$. However, we cannot use this to claim
convergence  in $C(B)$ (or in a smaller ball), 
since the Sobolev imbedding theorem in higher
dimensions require more derivatives, see 
\cite[Theorem 5.4]{Adams}.
(One might use a Sobolev space $\WWk(B)$
with more derivatives, but that would require more moments for the offset
distribution, apart from complicating the proof.) 
\end{remark}

\section{Proof of \refT{Trrt}}\label{SpfTrrt}

\begin{proof}[Proof of \refT{Trrt}]
Since the (weighted) \rrt{} can be realized as the (weighted) Yule tree at 
the stopping times $\tau_n$ defined by \eqref{taun}, we obtain by
taking $t=\tau_n$ in \eqref{tyule}, where $\bmu_n$ now refers to the \rrt,
\begin{equation}\label{tyulerrt}
  \Theta_{\sqrt{\tau_n},\,\bm \tau_n}(\bmu_n) \asto
N\bigpar{0,\E[\eta\eta\tr]}
.
\end{equation}
Combined with \eqref{tauas}, this implies \eqref{trrt1}.
To see this in detail, we can write \eqref{tyulerrt} in the form \eqref{trrt3}:
conditioned on \set{\ctt} and \set{\eta_v}, a.s,
\begin{equation}\label{trrt3x}
  \frac{X_{V_n}-\bm\log \tau_n}{\sqrt{\log \tau_n}}
\dto 
N\bigpar{0,\E[\eta\eta\tr]}.
\end{equation}
This implies \eqref{trrt3} by \eqref{tauas} and the Cram\'er--Slutsky theorem 
(still conditioning on \set{\ctt} and \set{\eta_v}).
\end{proof}

We have chosen to prove \refT{Trrt} using the continuous-time Yule tree.
However, it is also possible to argue directly in discrete time in the same
way.
We find it  interesting to  sketch this version of the argument too, 
for comparison, leaving some details to the reader;
see also the proof of
\cite[Theorem 1.6]{MM}, where the result is proved under stronger
assumptions using similar and partly the same arguments.
We consider an arbitrary $\rho>0$.

Define $F_n(s)$ as in \eqref{Frho}. Then, labelling the nodes in order of
appearance, 
$F_{n+1}(s)=F_n(s)+e^{\ii s X_{n+1}}$ and thus,
\cf{} \eqref{stoc},
\begin{align}\label{jc}
 \E\bigpar{F_{n+1}(s)\mid\cF_n}
&=F_n(s) + \frac{1}{n+\rho} \sum_{v\in T_{n} } \go_v e^{\ii s X_v}\gf(s)
=F_n(s) + \frac{\gf(s)}{n+\rho} F_n(s)
\notag\\&
= \frac{n+\rho+\gf(s)}{n+\rho}F_n(s).
\end{align}
In particular,
\begin{equation}
  \E {F_{n+1}(s)}
= \frac{n+\rho+\gf(s)}{n+\rho}\E F_n(s)
\end{equation}
and thus by induction, since $F_0(s)=\rho$,
\begin{equation}\label{cq}
  \E {F_{n}(s)}
= \frac{\gG(\rho+1)}{\gG(\rho+\gf(s))}\frac{\gG(n+\rho+\gf(s))}{\gG(n+\rho)}.
\end{equation}
While this formula is more complicated than \eqref{jb}, it is still easy to
get asymptotics. 
As a well-known consequence of Stirling's formula, see 
\cite[(5.11.12)]{NIST},
for any complex $a,b$,
\begin{equation}\label{nisse}
  \frac{\gG(n+a)}{\gG(n+b)} =\bigpar{1+o(1)}n^{a-b},
\qquad \ntoo.
\end{equation}
Hence, 
\begin{equation}\label{cqu}
  \E {F_{n}(s)} 
=\bigpar{1+o(1)} \frac{\gG(\rho+1)}{\gG(\rho+\gf(s))} n^{\gf(s)},
\qquad \ntoo.
\end{equation}

For the second moment, we similarly obtain, \cf{} \eqref{erII},
\begin{align}
&
\begin{aligned}
  \E\bigpar{F_{n+1}(s_1)F_{n+1}(s_2)\mid\cF_n}
&=F_n(s_1)F_n(s_2)+\frac{1}{n+\rho}\Bigl(
\gf(s_1)F_n(s_1)F_n(s_2)
\\&\quad{}+\gf(s_2)F_n(s_1)F_n(s_2)
+\gf(s_1+s_2)F_n(s_1+s_2)
\Bigr)
\end{aligned}
\notag\\&\quad
=
\frac{n+\rho+\gf(s_1)+\gf(s_2)}{n+\rho}F_n(s_1)F_n(s_2)
+\frac{\gf(s_1+s_2)}{n+\rho}F_n(s_1+s_2)
\end{align}
and thus, using induction and \eqref{cq},
\begin{multline}
  \E\bigpar{F_{n}(s_1)F_{n}(s_2)}  
=
\frac{\gG(n+\rho+\gf(s_1)+\gf(s_2)}{\gG(n+\rho)}
\biggl(\frac{\rho^2\gG(\rho)}{\gG(\rho+\gf(s_1)+\gf(s_2))}
\\
+\sum_{k=1}^n 
%\frac{\gG(n+\rho+\gf(s_1)+\gf(s_2))}{\gG(n+\rho)}
\frac{\gG(k+\rho+\gf(s_1+s_2)-1)}{\gG(k+\rho+\gf(s_1)+\gf(s_2))}
\frac{\gG(\rho+1)\gf(s_1+s_2)}{\gG(\rho+\gf(s_1+s_2))}
\biggr)
.
\label{effy}
\end{multline}

We define, similarly to \eqref{M},
$M_n(s):=F_n(s)/\E F_n(s)$
(at least for $s\in J$, which implies $F_n(s)\neq0$ by \eqref{cq}).
It follows from \eqref{jc} that $M_n(s)$ is a martingale for each $s$.
Furthermore, it follows from \eqref{effy}, \eqref{nisse} and \eqref{cqu} that,
uniformly for $s_1,s_2\in J$, recalling \eqref{boss},
\begin{equation}
  \E\bigpar{M_n(s_1)M_n(s_2)}
=O\Bigpar{1+\sumkn k^{-3/2}}
=O(1).
\label{mega}
\end{equation}
Moreover, \eqref{nisse} holds uniformly for $a$ and $b$ in any fixed
bounded sets in $\bbC$, and thus (using Cauchy's estimates), we can
differentiate \eqref{nisse} with respect to $a$ and $b$, arbitrarily many times.
It thus follows from \eqref{effy} and \eqref{cq} that also, for $s_1,s_2\in\JO$,
\begin{equation}
  \E\Bigpar{\frac{\partial M_n(s_1)}{\partial s_1}
\frac{\partial M_n(s_2)}{\partial s_2}}
=O\Bigpar{1+\sumkn \frac{\log^2 k}{k^{3/2}}}
=O(1).
\label{dux}
\end{equation}
Hence, \cf{} \eqref{mak}--\eqref{cup},
$M_n$ is an $L^2$-bounded martingale in $\WWJ$.
We can now argue as in \refS{SpfTYule}, and obtain for any $s(n)\to0$, \cf{}
\eqref{flex},
\begin{equation}\label{plex}
  \hbmu_n(s(n))=\bigpar{1+o(1)}n^{\gf(s(n))-1}
=e^{\log n(\gf(s(n))-1)+o(1)},
\end{equation}
and the proof is completed as in \refS{SpfTYule}.
(A minor simplification is that now $F_n(0)=n+\rho$ and thus $M_n(0)=1$
deterministically, so \refL{L0} is not needed.)

We thus see that there is no essential difference between using the discrete
time 
\rrt{} or the continuous time Yule tree in our arguments. However,
the formulas for the first and, in particular, second moments are much
simpler in the continuous time case, which leads to simpler calculations in
order to obtain the desired estimates.

\section{Proof of \refT{TP}}\label{SpfTP}

We extend the notations \eqref{FT} and \eqref{gf} to $\bbR^d$.
Let $a_n:=\sqrt{\log n}$ and $b_n:=m\log n$.

\begin{proof}[Proof of \refT{TP}]
\stepx \label{s1}
Consider first  an \SRW{} \Polya{} urn, with initial composition
$\rho\gd_0$, for some $\rho>0$.
Then, as shown in \refSS{SSkorr1}, the sequence $\mu_n$ is the same as for a
weighted \rrt, and thus \eqref{tp} follows from \refT{Trrt}.

\stepx\label{s2}
Consider now a \DRWPU{} $\mu_n$ with initial
composition $\mu_0=\rho\nu=\rho\gd_0*\nu$ for some $\rho\ge0$. 
Then \refL{Lux} and \eqref{*} show that the urn is given by
$\mu_n=\mu'_n*\nu$ for an \SRW{} Polya{} urn $\mu_n'$, with the same
offset distribution $\nu$ and initial composition $\mu'_0=\rho\gd_0$.
Hence, $\tmu_n=\tmu'_n*\nu$, and thus, %for any $a>0$ and $b\in\bbR^d$,
\cf{} \eqref{kai}, for every $s\in\bbR^d$,
\begin{equation}\label{qai}
  \begin{split}
  \widehat{\Thetaanbn(\bmu_n)}(s)
&=e^{-\ii s\cdot b_n/a_n}\hbmu_n\san
=e^{-\ii s\cdot b_n/a_n}\widehat{\bmu'_n}\san\widehat\nu\san
\\&
=  \widehat{\Thetaanbn(\bmu'_n)}(s)\,\widehat\nu\san.    
  \end{split}
\end{equation}
Here, $\widehat\nu\xsan\to1$  as \ntoo{} since $ a_n\to\infty$,
and Step \ref{s1} shows that \as{}, for all $s$,
\begin{equation}\label{qaisa}
\widehat{\Thetaanbn(\bmu'_n)}(s)
\to e^{-\frac12\E[\eta^2]s^2}.
\end{equation}
Hence, 
\eqref{qai} yields
\as, 
for all $s\in\bbR^d$,
\begin{equation}\label{qaisb}
\widehat{\Thetaanbn(\bmu_n)}(s)
\to e^{-\frac12\E[\eta^2]s^2},
\end{equation}
which proves \eqref{tp} in this case.

\stepx\label{s3}
Consider now a \DRWPU{} $\mu_n$, with 
arbitrary initial composition $\mu_0\in\cMx(\bbR^d)$.
Let $\rho:=\mu_0(\bbR^d)$, and consider a \Polya{} urn with the same offset
distribution $\nu$ but initial composition $\mu'_0:=\rho\nu$.
Then Step \ref{s2} applies to the second urn, which shows
that
\as, 
for all $s\in\bbR^d$,
\begin{equation}\label{qais3a}
\widehat{\Thetaanbn(\bmu'_n)}(s)
\to e^{-\frac12\E[\eta^2]s^2}.
\end{equation}
By \refL{LP} below, this implies
\begin{equation}\label{qais3b}
\widehat{\Thetaanbn(\bmu_n)}(s)
\asto e^{-\frac12\E[\eta^2]s^2},
\end{equation}
for every $s\in\bbR^d$, which implies \eqref{tp}.
(We may again use \cite{BertiEtAl}, or note that the proof of \refL{LP} shows
that \as{} \eqref{mans} holds for all $s$ simultaneously.)

\stepx\label{s4}
Finally, consider an \SRW{} \Polya{} urn $\mu_n'$  with arbitrary
initial composition.
We use \refL{Lux} and construct a corresponding \DRW{} \Polya{} urn
$\mu_n=\mu_n'*\nu$. Step \ref{s3} applies to the latter urn, 
which shows \eqref{qaisb} \as, and then \eqref{qai} shows \eqref{qaisa}
which proves the result in this case.

This completes the proof of \refT{TP}, assuming the lemma below.
\end{proof}

\begin{lemma}\label{LP}
  Let $\mu_n$ and $\mu'_n$ be two \DRW{} \Polya{} urn processes, with 
the same offset distribution $\nu\in\cP(\bbR^d)$ but
(possibly) different initial distributions $\mu_0$ and
  $\mu'_0$. Assume that $\mu_0(\bbR^d)=\mu_0'(\bbR^d)$.
Then, for any sequence $s_n\in\bbR^d$ with $s_n\to0$,
as \ntoo,
\begin{align}\label{maans}
\bigabs{\widehat{\tmu_n}(s_n)-\widehat{\tmu_n'}(s_n)}
\asto0.
\end{align}
Hence, for every $s\in \bbR^d$,
\begin{align}\label{mans}
\bigabs{\widehat{\Thetaanbn(\tmu_n)}(s)-\widehat{\Thetaanbn(\tmu_n')}(s)}
\asto0.
\end{align}
\end{lemma}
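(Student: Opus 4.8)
The plan is to put the two urns on a common probability space through the tree picture of \refSS{SSkorrD}, so that the two initial compositions influence only the colours attached to the children of the root, and then to bound the difference of the rescaled characteristic functions directly. Write $\rho:=\mu_0(\bbR^d)=\mu_0'(\bbR^d)$. By \refSS{SSkorrD} each urn is driven by a \wrrt{} with weight $\rho$, so I would build both on a \emph{single} such tree $(T_n)$, and I would let the mark randomness agree away from the root: take \iid{} $\eta_w\sim\nu$ for $w\neq o$, shared by the two urns, and for each child $v$ of $o$ take independent $\zeta_v\sim\widetilde{\mu_0}$ and $\zeta_v'\sim\widetilde{\mu_0'}$. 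For the first urn set $Z_v:=\zeta_v$ when $v$ is a child of $o$ and $Z_w:=Z_u+\eta_w$ when $w\neq o$ has parent $u\neq o$; for the second urn do the same with $\zeta'$ replacing $\zeta$ and the \emph{same} $\eta_w$. Using \ref{PU0}--\ref{PU}, \eqref{gdmu} and the fact that $\nu*\gd_z=\cL(\eta+z)$, this reproduces the correct joint law of each urn, and $\mu_n=\mu_0+\nu*\sum_{w\neq o}\gd_{Z_w}$, with the analogous identity for $\mu_n'$.

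With this coupling $Z_w-Z_w'=\zeta_v-\zeta_v'=:c_v$ whenever $w$ lies in the subtree $T_n^v$ of $T_n$ rooted at a child $v$ of $o$, so the difference depends only on which of these subtrees contains $w$. Taking Fourier transforms, $\widehat{\mu_n}(s)=\widehat{\mu_0}(s)+\widehat\nu(s)\sum_{w\neq o}e^{\ii s\cdot Z_w}$; hence, using $|\widehat{\mu_0}(s)|,|\widehat{\mu_0'}(s)|\le\rho$, $|\widehat\nu(s)|\le1$, $|e^{\ii a}-e^{\ii b}|\le\min(|a-b|,2)$, Cauchy--Schwarz and that the subtrees $T_n^v$ partition the $n$ non-root nodes, I would obtain
\begin{equation*}
\bigabs{\widehat{\tmu_n}(s)-\widehat{\tmu_n'}(s)}
\le \frac{2\rho}{n+\rho}+\sum_{v}\frac{|T_n^v|}{n}\,\min\bigpar{|s|\,|c_v|,\,2},
\end{equation*}
the sum over the children $v$ of $o$ in $T_n$.

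Now take $s=s_n\to0$. The first term tends to $0$. For the sum, fix $\ell$ and split according to whether $v$ was born at or after step $\ell$. The finitely many terms with $v$ born by step $\ell$ each tend to $0$ as \ntoo, since $|T_n^v|/n\le1$ and $\min(|s_n|\,|c_v|,2)\to0$ ($c_v$ being \as{} finite). The remaining terms contribute at most $2R_n^{(\ell)}/n$, where $R_n^{(\ell)}$ counts the non-root nodes lying in subtrees of root-children born after step $\ell$. For $n\ge\ell$ a new node joins such a subtree precisely when it attaches to $o$ or to a node already in one, i.e.\ with probability $(\rho+R_n^{(\ell)})/(n+\rho)$, so $(\rho+R_n^{(\ell)})/(n+\rho)$ is a $(0,1]$-valued martingale equal to $\rho/(\ell+\rho)$ at $n=\ell$; it therefore converges \as{} to some $L_\ell$ with $\E L_\ell=\rho/(\ell+\rho)$, and $R_n^{(\ell)}/n\asto L_\ell$. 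Since $R_n^{(\ell)}$ is nonincreasing in $\ell$, $L_\ell\downto L_\infty$ with $\E L_\infty=0$, so $L_\infty=0$ \as. Hence, \as, $\limsup_n\sum_v\frac{|T_n^v|}{n}\min(|s_n|\,|c_v|,2)\le2L_\ell$ for every $\ell$, so it equals $0$; this proves \eqref{maans}. Finally \eqref{mans} follows by applying \eqref{maans} with $s_n:=s/a_n$ and using $\widehat{\Thetaanbn(\tmu_n)}(s)=e^{-\ii s\cdot b_n/a_n}\widehat{\tmu_n}(s/a_n)$, together with the same identity for $\mu_n'$ (the scaling $a_n,b_n$ being common to both), \cf{} \eqref{kai}.

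The one delicate step is the tail estimate $L_\ell\downto0$: it rules out that a non-vanishing fraction of the tree ends up in root-subtrees created arbitrarily late. This is exactly why the estimate above must use the truncation $\min(|s|\,|c_v|,2)$ rather than $|s|\,|c_v|$: since no moment assumption on $\mu_0$ is available, the offsets $c_v$ cannot be controlled beyond \as{} finiteness, and all the quantitative control has to come from the smallness of the late root-subtrees.
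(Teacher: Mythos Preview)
Your proof is correct and follows the same overall strategy as the paper's: couple the two urns on a single weighted \rrt, observe that $Z_w-Z_w'$ is constant on each subtree hanging from a child of the root, and bound the difference of characteristic functions by a weighted sum over root-children of $|e^{\ii s\cdot c_v}-1|$. Where you diverge is in controlling the tail of that sum. The paper invokes the Chinese restaurant process with parameters $(0,\rho)$: it cites Pitman's theorem to get $N_n(u_k)/n\asto V_k$ with $(V_k)$ a $\mathrm{GEM}(0,\rho)$ sequence summing to $1$, then upgrades pointwise convergence to total variation convergence $\sum_k|N_n(u_k)/n-V_k|\to0$, and finishes by dominated convergence. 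Your argument is more self-contained: you fix a cutoff $\ell$, kill the finitely many early terms by $s_n\to0$, and handle the late mass by recognizing that $(\rho+R_n^{(\ell)})/(n+\rho)$ is a bounded martingale with mean $\rho/(\ell+\rho)$, whence $R_n^{(\ell)}/n\asto L_\ell$ with $\E L_\ell\to0$ and, by monotonicity, $L_\ell\downarrow0$ a.s. This avoids any appeal to the GEM structure or to the total-variation upgrade, at the cost of a slightly less explicit description of the limiting proportions; for the purpose at hand your route is arguably cleaner.
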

\begin{proof}
  We use the coupling with a weighted \rrt{} $T_n$ in \refSS{SSkorrD},
and may assume that both \Polya{} urns are defined by \ref{PU0}--\ref{PU}
from the same trees $T_n$. We use the notations $\gD\mu'_v$ and $Z_v'$ for
the second urn. Let $\ZZ_v:=Z_v'-Z_v\in\bbR^d$.

Recall that by \eqref{gdmu}, for any $v\neq o$ and conditioned on $Z_v$, 
we have
\begin{equation}\label{gd1}
\gD\mu_v %=R_{Z_v}=\rr_{Z_v}
=\cL(Z_v+\eta).  
\end{equation}
Hence, by \ref{PU}, if $v$ has a mother $u\neq o$, then, conditioned on
the tree process $(T_n)$ and on
$Z_u$ and $Z_u'$,
$Z_v$ has the distribution $\cL(\eta+Z_u)$, i.e., 
$ %\begin{equation}
Z_v \eqd \eta+Z_u,
$ %\end{equation}
and
\begin{equation}
Z'_v \eqd \eta+Z'_u
= \eta+ Z_u+\ZZ_u
\eqd Z_v+\ZZ_u.
\end{equation}
Hence, we may couple the two urn processes such that when $v$ has a mother
$u\neq o$, then
\begin{equation}\label{mamma}
  \ZZ_v:=Z'_v-Z_v = \ZZ_u.
\end{equation}
If $v$ is a daughter of $o$, we may, for example, choose $Z_v$ and $Z_v'$
independent. 

Note also that \eqref{gdmu} implies, 
\begin{equation}
\widehat{\gD\mu_v}(s)= \E \bigpar{e^{\ii s\cdot (Z_v+\eta)}\mid Z_v} 
=e^{\ii s \cdot Z_v} \E {e^{\ii s\cdot\eta}}
=e^{\ii s\cdot Z_v} \gf(s).
\end{equation}

For any node $v\neq o$, let $u(v)$ be the ancestor of $v$ that is a daughter
of $o$. (With $u(v)=v$ if $v$ is a daughter of $o$.)
Then, repeated application of \eqref{mamma} shows that $\ZZ_v=\ZZ_{u(v)}$.
Consequently, for $v\neq o$,
\begin{multline}\label{anna}
\bigabs{\widehat{\gD\mu_v}(s)  - \widehat{\gD\mu'_v}(s) }
= \bigabs{e^{\ii s\cdot Z_v} \gf(s) - e^{\ii s\cdot Z_v'} \gf(s)}
= \bigabs{e^{\ii s\cdot Z_v}- e^{\ii s\cdot Z_v'}}| \gf(s)|
\\
\le  \bigabs{e^{\ii s\cdot Z_v}- e^{\ii s\cdot Z_v'}}
= \bigabs{e^{\ii s\cdot \ZZ_v}- 1}
= \bigabs{e^{\ii s\cdot \ZZ_{u(v)}}- 1}.  
\end{multline}
%\begin{align}
%&\bigabs{\widehat{\gD\mu_v}(s)  - \widehat{\gD\mu'_v}(s) }
%= \bigabs{e^{\ii s\cdot Z_v} \gf(s) - e^{\ii s\cdot Z_v'} \gf(s)}
%= \bigabs{e^{\ii s\cdot Z_v}- e^{\ii s\cdot Z_v'}}| \gf(s)|
%\notag\\&\hskip4em
%\le  \bigabs{e^{\ii s\cdot Z_v}- e^{\ii s\cdot Z_v'}}
%= \bigabs{e^{\ii s\cdot \ZZ_v}- 1}
%= \bigabs{e^{\ii s\cdot \ZZ_{u(v)}}- 1}.
%\label{anna}
%\end{align}

We sum \eqref{anna} over all $v\in T_n$ with $v\neq o$. 
Let $D$ be the set of nodes in $T_\infty$ that are daughters of $o$, and for
each $u\in D$, let $N_n(u)$ be the number of descendants of $u$ in $T_n$.
Then, with $\rho=\mu_0(\bbR)=\mu'_0(\bbR)$,
\begin{align}
\bigabs{\widehat{\mu_n}(s)-\widehat{\mu_n'}(s)}
&
\le \sum_{v\in T_n} \bigabs{\widehat{\gD\mu_v}(s)- \widehat{\gD\mu'_v}(s)}
\notag\\&
\le \bigabs{\widehat{\gD\mu_0}(s)  - \widehat{\gD\mu'_0}(s) }
+ \sum_{o\neq v\in T_n} \bigabs{e^{\ii s\cdot \ZZ_{u(v)}}- 1}
\notag\\&
\le 2\rho+ \sum_{u\in D} N_n(u) \bigabs{e^{\ii s\cdot \ZZ_{u}}- 1}.
\end{align}
Dividing by $|T_n|=n+\rho$ we find for the normalized distributions
\begin{align}\label{bull}
\bigabs{\widehat{\tmu_n}(s)-\widehat{\tmu_n'}(s)}
\le \frac{2\rho}{n}+ \sum_{u\in D} \frac{N_n(u)}{n} 
\bigabs{e^{\ii s\cdot \ZZ_{u}}- 1}.
\end{align}
Number the elements of $D$ as $u_1,u_2,\dots$ (in order of appearance).
It is well known, at least  in the unweighted case, that the relative sizes
$N_n(u_k)/n$ of the branches converge \as{} as \ntoo, say
\begin{equation}\label{chinese}
  N_n(u_k)/n\asto V_k, \qquad k\ge 1.
\end{equation}
In fact, this is an instance of the Chinese restaurant process with 
parameters (seating plan) $(0,\rho)$, see \cite[Section 3.2]{Pitman};
take one table for each  $u\in D$ and all its descendants, 
and then the probability
that node $n+1$ joins a table with $n_k$ nodes is $n_k/(n+\rho)$, and the
probability that it starts a new table (and thus belongs to $D$) is
$\rho/(n+\rho)$. 
Hence, \eqref{chinese} follows from \cite[Theorem 3.2]{Pitman}, which
furthermore shows that $V_k=W_k\prod_{j=1}^{k-1}(1-W_j)$
for an \iid{} sequence $W_j\sim \operatorname{Beta}(1,\rho)$.
(This result can also be proved easily using a sequence of classical
two-colour  \Polya{} urns, \cf{} \eg{} \cite[Appendix A]{SJ320}.)
It follows that, \as,
\begin{equation}\label{kul}
  \sumk V_k = 1-\prod_{j=1}^\infty (1-W_j) = 1,
\end{equation}
and thus the limits $V_k$ in \eqref{chinese} form a random probability
distribution on $\bbN$. (The random distribution $\operatorname{GEM}(0,\rho)$
\cite{Pitman}.) For a discrete distributions, pointwise convergence of the
probabilities to a limiting distribution is equivalent to convergence in
total variation, see \cite[Theorem 5.6.4]{Gut}, and thus \eqref{chinese}
and \eqref{kul} imply
\begin{equation}\label{kuli}
\sumk  \lrabs{\frac{ N_n(u_k)}{n}- V_k} \asto 0.
\end{equation}
We now return to \eqref{bull} and find, for any sequence $s_n\in\bbR^d$,
\begin{align}\label{bill}
\lrabs{\widehat{\tmu_n}(s_n)-\widehat{\tmu_n'}(s_n)}
\le \frac{2\rho}{n}
+ \sumk V_k \lrabs{e^{\ii s_n\cdot \ZZ_{u_k}}- 1}
+ \sumk 2\lrabs{\frac{ N_n(u_k)}{n}- V_k}.
\end{align}
Here the last sum converges to 0 \as{} by \eqref{kuli}.
Furthermore, note that $\ZZ_{u_k}$ does not depend on $n$.
Hence, if $s_n\to0$, then the first sum converges to 0 \as, by 
\eqref{kul} and dominated convergence. 
Consequently, \eqref{maans} follows from \eqref{bill}.
Finally, \eqref{mans} follows because
$  \widehat{\Thetaanbn(\bmu_n)}(s)
=e^{-\ii s\cdot b_n/a_n}\hbmu_n\xsan$
and similarly for $\bmu_n'$.
\end{proof}

\section{Binary trees}
\label{Sbinary}

Theorems \ref{Trrt}--\ref{TYule} have analogues for \bst{s} and binary Yule
trees.
(In fact,  the unweighted cases of \refTs{Trrt}--\ref{TYule} can be seen as
special cases of the results for binary trees, see \refE{Ebstrrt} below.)
We let in the present section  $T_n$ and $\ctt$ denote these random binary
trees.
Recall from \refS{Stree1} that in the these trees, nodes are either internal
(dead) or external (living), and that each internal node has two children,
labelled left and right. If $v$ is an internal node, we denote its left and
right child by $\vl$ and $\vr$, respectively.

In this context, it is natural to generalize the definitions in
\refS{Sbranch}, and allow the offsets to have different distributions for
left and right children; furthermore, we may allow a dependency between the
offsets of the two children of a node. 
Hence, in this section we assume,
instead of \ref{BWeta} in \refS{Sbranch}
(see also \refR{R1}),
that we are given 
a random variable $\etax=(\eta\subL,\eta\subR)\in\bbR^d\times\bbR^d$
with distribution $\nux:=\cL(\etax)$,
and that
$\etax_v=(\eta_\vl,\eta_\vr)$, $v\in T$, are
\iid{} copies of $\etax$. This defines $\eta_w$ for every $w\neq o$, which
is enough to define $X_v$ as before; recall \eqref{X} and \refR{R2}.

This setting (for the binary Yule tree)
is a special case of the one in \citet{Uchiyama}, where also, more
generally, the number of children may vary.
\refT{TYuleb}  below (at least the external case)
is thus a special case of \cite[Theorem 4]{Uchiyama},
(and \refT{Tbst} an easy consequence), but as in
\refSs{SpfTYule}--\ref{SpfTrrt} we give (rather) complete proofs with
explicit calculations for both binary Yule trees and \bst{s},
for comparison with the proofs of \refTs{Trrt}--\ref{TYule} and other
similar proofs in the literature.

This setting has also been used by
\citet{Fekete}, who showed (among other results) a weaker
version of \refT{Tbst} below with convergence
in probability; we improve this to convergence a.s.

Since we have two types of nodes, we define besides $\bmu$ given by
\eqref{bmu}, where we use all nodes, also the internal and external
versions.
Let $\Ti$ and $\Te$ be the sets of internal and external nodes in $T$,
respectively, and let
\begin{align}
  \bmui&:=\frac{1}{|\Ti|}\sum_{v\in\Ti}\gd_{X_v},
&
  \bmue&:=\frac{1}{|\Te|}\sum_{v\in\Te}\gd_{X_v}.
\end{align}
Note that, since the trees are binary, $|\Te|=|\Ti|+1$, and thus
$|T|=|\Ti|+|\Te|=2|\Ti|+1$. 
In particular, for the \bst, 
$|\Ti_n|=n$,
$|\Te_n|=n+1$
and $|T_n|=2n+1$.

Let
\begin{equation}
  \label{mb}
  m:=\E \eta\subL+\E\eta\subR\in\bbR^d.
\end{equation}
\begin{theorem}
  \label{Tbst}
Let\/ $T_n$ be the \bst{} and
suppose that\/ $\E|\etax|^2<\infty$.
Then, as \ntoo,
in $\cP(\bbR^d)$,
\begin{equation}\label{tbst1}
  \Theta_{\sqrt{\log n},\,\bm\log n}(\bmu_n) \asto
N\bigpar{0,\E[\etal\etal\tr]+\E[\etar\etar\tr]}
.
\end{equation}
Furthermore, the same result holds for the internal and external empirical
distributions $\bmui_n$ and $\bmue_n$.
\end{theorem}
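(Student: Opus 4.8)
The plan is to mimic Sections~\ref{SpfTYule}--\ref{SpfTrrt}: first prove the corresponding statement for the continuous-time \emph{binary Yule tree} $\ctt$ of \refS{SSYule}, and then transfer it to the \bst{} $T_n$ by stopping at the times $\tau_n$. As in those sections it suffices to take $d=1$ (the general case follows by applying the one-dimensional result to the projections $u\tr X_v$, whose offsets are the pairs $(u\tr\etal,u\tr\etar)$, and quoting \cite{BertiEtAl}), and it suffices to treat the external and internal measures $\bmue_t$ and $\bmui_t$, since $\bmu_t=\tfrac{|\Ti_t|}{|T_t|}\bmui_t+\tfrac{|\Te_t|}{|T_t|}\bmue_t$ is a convex combination with deterministic weights tending to $\tfrac12$.

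First I would set up the Fourier side. Put $\psi(s):=\gflr{s}$, so $\psi(0)=2$, and fix $J=[-\gd,\gd]$ with $\gd$ so small that $\Re\bigpar{\psi(s_1+s_2)-\psi(s_1)-\psi(s_2)+1}\le-\tfrac12$ on $J\times J$ (possible since the value at $0$ is $-1$); taking $s_2=-s_1$ this also gives $\Re\psi(s)\ge\tfrac74$, hence $\Re(\psi(s)-1)\ge\tfrac34>\tfrac12$, on $J$. With $\Fe_t(s):=\sum_{v\in\Te_t}e^{\ii sX_v}$ and $\Fi_t(s):=\sum_{v\in\Ti_t}e^{\ii sX_v}$, so that $\hbmuet(s)=\Fe_t(s)/|\Te_t|$ and $\hbmuit(s)=\Fi_t(s)/|\Ti_t|$, the infinitesimal-generator computation of \refL{L1} — using that a living node $v$ dies at rate $1$, becoming internal and producing $\vl,\vr$ with $X_\vl=X_v+\eta_\vl$, $X_\vr=X_v+\eta_\vr$ — gives the triangular system $\frac{d}{dt}\E\Fe_t(s)=(\psi(s)-1)\E\Fe_t(s)$, $\frac{d}{dt}\E\Fi_t(s)=\E\Fe_t(s)$, with $\E\Fe_0(s)=1$, $\E\Fi_0(s)=0$; hence $\E\Fe_t(s)=e^{t(\psi(s)-1)}$ and $\E\Fi_t(s)=\xpar{e^{t(\psi(s)-1)}-1}/(\psi(s)-1)$, and a similar computation yields an explicit formula for $\E\bigpar{\Fe_t(s_1)\Fe_t(s_2)}$ (a combination of $e^{t(\psi(s_1)+\psi(s_2)-2)}$ and $e^{t(\psi(s_1+s_2)-1)}$).

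Next I would observe that the martingale $M^e_t(s):=\Fe_t(s)/\E\Fe_t(s)$ behaves exactly as in \refS{SpfTYule}: it is a martingale in $t$ for each $s$, the second-moment formula and the decay condition give $\E|M^e_t(s)|^2\le C$ on $J$ (\cf\ \refL{LM}), and — since $\E|\etax|^2<\infty$ makes $\gfl,\gfr$ twice differentiable, and since the analogues of \refL{L3a} and \refL{L3b} hold, with the same proofs, for the binary Yule tree (because $|\ctt|=2|\Te_t|-1$ with $|\Te_t|$ a Yule process) — the argument of Lemmas~\ref{LMWWJ}--\ref{LCC} shows $M^e_t\asto M^e_\infty$ in $\WWJ\subset C(J)$, with $M^e_\infty(0)=\lim|\Te_t|/e^t=W>0$ \as{} by \eqref{winston}. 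As in the proof of \refT{TYule}, this gives $\hbmuet(s(t))=\bigpar{1+o(1)}e^{t(\psi(s(t))-2)}$ for $s(t)\to0$, and then, with $a=\tqq$, $b=mt$ and the Taylor expansion $\psi(s)-2=\ii ms-\tfrac12\xpar{\E[\etal^2]+\E[\etar^2]}s^2+o(s^2)$, one gets $\widehat{\Thetaab(\bmue_t)}(s)\asto e^{-\frac12(\E[\etal^2]+\E[\etar^2])s^2}$, i.e.\ (after the projection step) $\Thetaab(\bmue_t)\asto N\bigpar{0,\E[\etal\etal\tr]+\E[\etar\etar\tr]}$.

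The main obstacle is the internal version, because $\Fi_t/\E\Fi_t$ is \emph{not} a martingale: the drift of $\Fi_t$ is $\Fe_t$, not a multiple of $\Fi_t$. I would handle this by the Duhamel decomposition $M^\sharp_t(s):=\Fi_t(s)-\int_0^t\Fe_u(s)\dd u$, which \emph{is} a martingale (jumps $e^{\ii sX_v}$ at each death). Writing $M^e_u=M^e_\infty+o(1)$ uniformly on $J$ and using $\Re(\psi(s)-1)\ge\tfrac34>0$, the integral equals $\E\Fi_t(s)\,M^e_\infty(s)\bigpar{1+o(1)}$ uniformly on $J$; for the error term, generator-ODE bounds (notably $\E\sum_{v\in\Te_t}|X_v|^2=O(t^2e^t)$ — the crude $|X_v|\le\sum_w|\eta_w|$ is too lossy here) give $\E\bignorm{M^\sharp_t}_{\WWJ}^2=O(t^2e^t)$, negligible against $\inf_{s\in J}|\E\Fi_t(s)|^2\gtrsim e^{3t/2}$. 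Hence $M^\sharp_t/\E\Fi_t\to0$ \as{} uniformly on $J$ (Doob's $L^2$ inequality, Borel--Cantelli along integer times, and interpolation between them), so $\Fi_t/\E\Fi_t\asto M^e_\infty$ uniformly on $J$; the external-case argument then runs verbatim (using also that $\E\Fi_t(s(t))/\E\Fi_t(0)\sim e^{t(\psi(s(t))-2)}$ as $s(t)\to0$), giving $\Thetaab(\bmui_t)\asto N\bigpar{0,\E[\etal\etal\tr]+\E[\etar\etar\tr]}$, and $\bmu_t$ follows from the convex combination. Finally, the \bst{} $T_n$ is $\ctt$ stopped at $\tau_n:=\min\set{t:|\Te_t|=n+1}$, and \eqref{winston} gives $\tau_n=\log n+O(1)$ \as; inserting $t=\tau_n$ into the three Yule-tree limits and using the Cram\'er--Slutsky theorem — exactly as \refT{Trrt} is deduced from \refT{TYule} in \refS{SpfTrrt} — yields \eqref{tbst1} and its internal and external counterparts.
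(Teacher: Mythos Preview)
Your external-version argument for the binary Yule tree and the transfer to the \bst{} via $\tau_n$ are essentially identical to the paper's proof. The internal version, however, is handled quite differently.

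The paper does not set up a second martingale. Instead it exploits the combinatorial identity
\[
\Fe_t(s)-\Fi_t(s)=1+\sum_{v\in\ctti}e^{\ii sX_v}\bigl(e^{\ii s\eta_\vl}+e^{\ii s\eta_\vr}-2\bigr),
\]
which follows because the children of the internal nodes are exactly all nodes except the root. Bounding each summand by $2\bigl(|s\eta_\vl|\wedge1+|s\eta_\vr|\wedge1\bigr)$ and applying the law of large numbers (conditionally on the tree, the $\etax_v$ for internal $v$ are \iid) shows directly that $\bigl|\Fe_t(s(t))-\Fi_t(s(t))\bigr|/|\ctti|\asto0$ whenever $s(t)\to0$. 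Hence $\hbmuit(s/\tqq)=\hbmuet(s/\tqq)+o(1)$ \as, and the internal result follows immediately from the external one.

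Your Duhamel decomposition $M^\sharp_t(s)=\Fi_t(s)-\int_0^t\Fe_u(s)\dd u$ is correct and also reaches the conclusion, but it is heavier: it requires the growth estimate $\E\sum_{v}|X_v|^2=O(t^2e^t)$ (which the paper never needs), a Doob--Borel--Cantelli argument to upgrade $L^2$ control of the non-martingale ratio $M^\sharp_t/\E\Fi_t$ to \as{} uniform convergence, and some care with the $s$-dependent normaliser. The paper's trick uses the binary structure to compare $\Fe$ and $\Fi$ directly, which is shorter and needs no moment bounds beyond what the external case already used; your route is more robust in that it does not rely on any such identity and would generalise more readily to trees without this neat internal/external relation.
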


\begin{theorem}[\citet{Uchiyama}]
  \label{TYuleb}
Let\/ $\ctt$ be the binary Yule tree and
suppose that\/ $\E|\etax|^2<\infty$.
Then, as \ttoo,
in $\cP(\bbR^d)$,
\begin{equation}\label{tyuleb}
  \Theta_{\sqrt{t},\,\bm t}(\bmu_t) \asto
N\bigpar{0,\E[\etal\etal\tr]+\E[\etar\etar\tr]}
.
\end{equation}
Furthermore, the same result holds for the internal and external empirical
distributions $\bmui_t$ and $\bmue_t$.
\end{theorem}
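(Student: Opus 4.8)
The plan is to rerun the argument of \refS{SpfTYule} in the present two-type setting, giving explicit calculations as there (the external case is also contained in \cite[Theorem~4]{Uchiyama}). I would first take $d=1$ and then pass to $\bbR^d$ by projecting onto one-dimensional directions, exactly as in the proof of \refT{TYule}. Write $\gf(s):=\gflr{s}$, where $\gfl,\gfr$ are the characteristic functions of $\etal,\etar$; note $\gf(0)=2$. Let $\ctte$, $\ctti$ be the sets of external and internal nodes of $\ctt$ and put
\begin{equation}
  \Fe_t(s):=\sum_{v\in\ctte}e^{\ii sX_v},\qquad
  \Fi_t(s):=\sum_{v\in\ctti}e^{\ii sX_v},\qquad
  F_t(s):=\Fi_t(s)+\Fe_t(s),
\end{equation}
so that the characteristic functions of $\bmue_t$, $\bmui_t$ and $\bmu_t$ are $\Fe_t(s)/\Fe_t(0)$, $\Fi_t(s)/\Fi_t(0)$ and $F_t(s)/F_t(0)$. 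In the binary Yule tree each external (living) node dies at rate $1$ and is then replaced by a left and a right child carrying the offsets $\etal,\etar$, so the jump of $(\Fi_t(s),\Fe_t(s))$ caused by the death of $v\in\ctte$ has conditional expectation, given $\cF_t$, equal to $\bigpar{e^{\ii sX_v},\,(\gf(s)-1)e^{\ii sX_v}}$. Arguing as in \refL{L1} this gives
\begin{equation}\label{Bode}
  \ddt\E\Fe_t(s)=(\gf(s)-1)\,\E\Fe_t(s),\qquad
  \ddt\E\Fi_t(s)=\E\Fe_t(s),
\end{equation}
with $\E\Fe_0(s)=1$, $\E\Fi_0(s)=0$, whence $\E\Fe_t(s)=e^{t(\gf(s)-1)}$ and $\E\Fi_t(s)=(\gf(s)-1)\qw\bigpar{e^{t(\gf(s)-1)}-1}$ when $\gf(s)\ne1$; a similar computation expresses $\E\bigpar{\Fe_t(s_1)\Fe_t(s_2)}$, and likewise the mixed and total second moments, as an explicit combination of $e^{t(\gf(s_1)+\gf(s_2)-2)}$ and $e^{t(\gf(s_1+s_2)-1)}$, with coefficients rational in the values of $\gfl$, $\gfr$ and the joint characteristic function $\E e^{\ii(s_1\etal+s_2\etar)}$ and smooth near the origin since $\E|\etax|^2<\infty$.

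I would then treat the external empirical distribution by copying \refS{SpfTYule}. Put $M\supe_t(s):=\Fe_t(s)/\E\Fe_t(s)=e^{-t(\gf(s)-1)}\Fe_t(s)$; the first equation in \eqref{Bode} shows $M\supe_t(s)$ is a martingale for each fixed $s$, and the second-moment formula together with the fact that $\Re\bigpar{\gf(s_1)+\gf(s_2)-\gf(s_1+s_2)-1}$ is close to $2+2-2-1=1$ for small $s_1,s_2$ gives $\E|M\supe_t(s)|^2\le C$ on a small interval $J=[-\gd,\gd]$, as in \refL{LM}. Lemmas \ref{LMWWJ}--\ref{LCC} then carry over verbatim (using the evident analogues of Lemmas \ref{L3a} and \ref{L3b}, valid because the binary Yule tree has at most twice as many nodes as there are living individuals, which follow an ordinary Yule process), so there is a random $M\supe_\infty\in\WWJ\subset C(J)$ with $M\supe_t\asto M\supe_\infty$ uniformly on $J$, and $M\supe_\infty(0)=\lim_t e^{-t}|\ctte|=W>0$ a.s.\ by \eqref{winston}. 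From
\begin{equation}\label{Bfle}
  \widehat{\bmue_t}(s)=\frac{\Fe_t(s)}{\Fe_t(0)}=\frac{M\supe_t(s)}{M\supe_t(0)}\,e^{t(\gf(s)-2)},
\end{equation}
evaluated at the argument $s/\tqq$, the prefactor tends to $1$ a.s.\ while $t\bigpar{\gf(s/\tqq)-2}=\ii ms\tqq-\tfrac12\bigpar{\E[\etal^2]+\E[\etar^2]}s^2+o(1)$ with $m:=\E\etal+\E\etar$; as in \eqref{kaisa} this gives $\widehat{\Theta_{\tqq,\,mt}(\bmue_t)}(s)\asto e^{-\frac12(\E[\etal^2]+\E[\etar^2])s^2}$ for each fixed $s$, and projecting onto directions $u\in\bbR^d$ as in \eqref{kaisb} and invoking \cite{BertiEtAl} yields \eqref{tyuleb} for $\bmue_t$.

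Finally I would transfer this to $\bmui_t$ and $\bmu_t$, and I expect this to be the real work, since $\Fi_t(s)/\E\Fi_t(s)$ is \emph{not} a martingale and there is no direct analogue of $M\supe_t$ for internal nodes. Instead, the second equation in \eqref{Bode} gives the pathwise decomposition $\Fi_t(s)=\int_0^t\Fe_u(s)\dd u+L_t(s)$, where $L_t(s)$ is the compensated jump martingale; its quadratic variation is $|\ctti|$, so $\E|L_t(s)|^2=\E|\ctti|=e^t-1$ for every $s$, while $\E|\partial_sL_t(s)|^2=\E\sum_{v\in\ctti}|X_v|^2=O(t^2e^t)$ (e.g.\ by differentiating $\E\Fi_t$ twice at $0$). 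Dividing by $e^{t(\gf(s)-1)}$, whose modulus decays like $e^{-t\Re(\gf(s)-1)}$ with $\Re(\gf(s)-1)>0$ on $J$, and applying Doob's $L^2$ inequality on the intervals $[n,n+1]$, Borel--Cantelli and the Sobolev inclusion \eqref{kg}, one obtains $\sup_{s\in J}\bigabs{e^{-t(\gf(s)-1)}L_t(s)}\asto0$; meanwhile, since $\Fe_u(s)=M\supe_u(s)e^{u(\gf(s)-1)}$ and $M\supe_u\asto M\supe_\infty$ uniformly on $J$, one checks $e^{-t(\gf(s)-1)}\int_0^t\Fe_u(s)\dd u\asto M\supe_\infty(s)/(\gf(s)-1)$ uniformly on $J$. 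Hence $e^{-t(\gf(s)-1)}\Fi_t(s)\asto M\supe_\infty(s)/(\gf(s)-1)$ uniformly on $J$; combined with $\gf(0)-1=1$ and $e^{-t}|\ctti|\to W$, this shows that $\widehat{\bmui_t}(s)=\Fi_t(s)/\Fi_t(0)$, evaluated at $s/\tqq$, agrees up to a factor $1+o(1)$ with $\widehat{\bmue_t}(s/\tqq)$, so $\widehat{\bmui_t}(s/\tqq)=(1+o(1))e^{t(\gf(s/\tqq)-2)}$, which gives \eqref{tyuleb} for $\bmui_t$; the case of $\bmu_t$ follows in the same way since $F_t=\Fi_t+\Fe_t$. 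Everything outside this transfer step is a routine rerun of \refS{SpfTYule}; the one delicate point is the uniform-in-$s$ control of the auxiliary martingale $L_t$.
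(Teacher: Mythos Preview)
Your treatment of the external empirical distribution $\bmue_t$ is essentially the paper's own argument: the paper defines $\tgf(s):=\gfl(s)+\gfr(s)-1$ (your $\gf(s)-1$), obtains $\E\Fe_t(s)=e^{t\tgf(s)}$ and the analogue of \eqref{EFF} with the joint characteristic function $\gfx$ appearing in the numerator, and then reruns Lemmas \ref{LM}--\ref{L0} verbatim. So for that part there is nothing to add.

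The genuine difference is in the transfer to $\bmui_t$ (and hence $\bmu_t$). Your route---the semimartingale decomposition $\Fi_t=\int_0^t\Fe_u\,du+L_t$, followed by moment bounds on $L_t$ and $\partial_s L_t$, Doob on unit intervals, Borel--Cantelli, and the Sobolev embedding---is correct and would go through, but it is considerably heavier than what the paper does. The paper bypasses all of this with a one-line combinatorial observation: in a binary tree the children of the internal nodes are exactly all nodes except the root, so
\[
\Fe_t(s)-\Fi_t(s)=F_t(s)-2\Fi_t(s)=1+\sum_{v\in\ctti}e^{\ii sX_v}\bigl(e^{\ii s\eta_\vl}+e^{\ii s\eta_\vr}-2\bigr),
\]
and hence $|\Fe_t(s)-\Fi_t(s)|\le 1+2\sum_{v\in\ctti}\bigl(|s\eta_\vl|\wedge1+|s\eta_\vr|\wedge1\bigr)$. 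Dividing by $|\ctti|$ and taking $s=s/\sqrt t$, the ordinary strong law of large numbers (conditioned on the tree) gives $|\ctti|^{-1}\bigl(\Fe_t(s/\sqrt t)-\Fi_t(s/\sqrt t)\bigr)\asto0$, so $\widehat{\bmui_t}(s/\sqrt t)$ and $\widehat{\bmue_t}(s/\sqrt t)$ differ by $o(1)$ a.s. This avoids any further martingale or Sobolev-space work; the full case $\bmu_t$ then follows from $\bmu_t=\frac{|\ctti|}{|\ctt|}\bmui_t+\frac{|\ctte|}{|\ctt|}\bmue_t$. Your approach is more systematic and would survive in settings without such a combinatorial identity, but here the paper's trick is both shorter and requires no new uniform-in-$s$ estimates.
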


\begin{remark}\label{Rbins}
  Note that in Theorems \ref{Tbst}--\ref{TYuleb} it does not matter whether
  there is a dependency between $\etal$ and $\etar$ or not; $m$ and the
  asymptotic variance $\E[\etal\etal\tr]+\E[\etar\etar\tr]$ are obtained by
  summing separate contributions from $\etal$ and $\etar$. This is not
  surprising, since if we consider $X_v$ for a fixed node $v$, it is a sum
  of either $\eta_{u\subL}$ or $\eta_{u\subR}$ for all $u\prec v$, and
  these are all independent. Furthermore, there are typically about as many
  left and right steps in the path to a node $v$, and it is easy to see that
  the distribution of $X_V$ for a uniformly random node $V$, after
  normalizing as above, has  the normal limit in \eqref{tbst1} and
  \eqref{tyuleb}, \cf{} \refR{Rannealed}.
\end{remark}

\begin{example}\label{Ebstrrt}
  Let $\etar=0$. Then each right child can be identified with its mother;
this reduces the binary Yule tree 
to the Yule tree, and the \bst{} to the
\rrt{},
and 
the theorems above (for external nodes)
reduce to Theorems \ref{Trrt}--\ref{TYule}.
(This reduction of the \bst{} to the \rrt{} identifies 
the nodes in the \rrt{} with
the external nodes in the \bst{}. 
It is related to the well-known rotation correspondence
\cite[p.~72]{Drmota},
which, however, 
identifies the nodes in the \rrt{} with the internal nodes in the \bst.)
\end{example}

\begin{proof}[Proof of Theorems \ref{Tbst} and \ref{TYuleb}]
  The arguments in Sections \ref{SpfTYule}--\ref{SpfTrrt} require only minor
  modifications. We therefore omit many details.
We assume again $d=1$.
We denote the \chf{} of $\etax$ by 
\begin{equation}
\gfx(s_1,s_2):=\E e^{\ii(s_1\eta\subL+s_2\eta\subR)},
\qquad s_1,s_2\in\bbR^d, 
\end{equation}
and the \chf{s} of the marginal distributions $\eta\subL$ and $\eta\subR$ by
\begin{align}
  \gf\subL(s)&:=\E e^{\ii s \eta\subL}=\gfx(s,0),
&
  \gf\subR(s)&:=\E e^{\ii s \eta\subR}=\gfx(0,s).
\end{align}

We consider  the binary Yule tree $\ctt$ and define, in addition to $F_t(s)$,
\begin{align}
  \label{FF}
\Fi_t(s)&:=\sum_{v\in \ctti} e^{\ii s X_v}, 
&
\Fe_t(s)&:=\sum_{v\in \ctte} e^{\ii s X_v}.
%\qquad s\in \bbR.
\end{align}
For the exterior version $\Fe_t$ we can argue as in \refS{SpfTYule}. 
We assume again first $d=1$.
Each
external node $v$ dies with intensity 1, and then gets two children. This
changes $\Fe_t(s)$ by
\begin{equation}\label{ea}
  -e^{\ii s X_v} + e^{\ii s X_\vl}+e^{\ii s X_\vr}
=
%  -e^{\ii s X_v} + e^{\ii s (X_v+\eta_\vl)}+e^{\ii s (X_v+\eta_\vr)}
%=
  e^{\ii s X_v}\bigpar{-1 + e^{\ii s \eta_\vl}+e^{\ii s \eta_\vr}}
\end{equation}
 For convenience, define
\begin{equation}\label{tgf}
\tgf(s):=\E \bigpar{e^{\ii s \eta_\vl}+e^{\ii s \eta_\vr}-1}
=\gfl(s)+\gfr(s)-1  
\end{equation}
and 
\begin{equation}
  \begin{split}
  \psi(s_1,s_2)&:=\E\bigsqpar{ \bigpar{e^{\ii s_1 \eta_\vl}+e^{\ii s_1 \eta_\vr}-1}
\bigpar{e^{\ii s_2 \eta_\vl}+e^{\ii s_2 \eta_\vr}-1}} 
\\&\phantom:
\begin{aligned}
  =\gfx(s_1,s_2)+\gfx(s_2,s_1)+\gfl(s_1+s_2)+\gfr(s_1+s_2)
\\\qquad-\gfl(s_1)-\gfl(s_2)-\gfr(s_1)-\gfr(s_2)+1
\end{aligned}
\\&\phantom:
 =\gfx(s_1,s_2)+\gfx(s_2,s_1)+\tgf(s_1+s_2)-\tgf(s_1)-\tgf(s_2).
  \end{split}
\end{equation}
It follows from \eqref{ea} that, \cf{} \eqref{er66},
\begin{equation}
  \label{er66b}
  \ddt \E \Fe_t(s) =  \bigpar{\gfl(s)+\gfr(s)-1} \E \Fe_t(s)
= \tgf(s)\E \Fe_t(s)
\end{equation}
and thus
\begin{equation}
  \label{EFe}
\E \Fe_t(s) =  e^{t\tgf(s) }.
\end{equation}
Similarly, 
\cf{} \eqref{erII} and \eqref{EFF},
\begin{equation}\label{erIIb}
  \begin{split}
&\ddt \E \bigpar{\Fe_{t}(s_1)\Fe_t(s_2)}
\notag\\&\qquad
=\bigpar{\tgf(s_1)+\tgf(s_2)}\E\bigpar{\Fe_t(s_1)\Fe_t(s_2)}
+ \psi(s_1,s_2)\E \Fe_t(s_1+s_2)
  \end{split}
\end{equation}
and hence, at least for $s_1,s_2$ in a suitably small interval $J=(-\gd,\gd)$,
    \begin{equation}\label{EFFb}
      \begin{split}
      \E \bigpar{\Fe_t(s_1)\Fe_t(s_2)} 
&= 
\frac{\gfx(s_1,s_2)+\gfx(s_2,s_1)}{\tgf(s_1)+\tgf(s_2)-\tgf(s_1+s_2)}
e^{t(\tgf(s_1)+\tgf(s_2))}
\\&\qquad{}-
\frac{\psi(s_1,s_2)}{\tgf(s_1)+\tgf(s_2)-\tgf(s_1+s_2)}
e^{t\tgf(s_1+s_2)}
.
      \end{split}
\end{equation}

The rest of the proof for $\bmue$ is as before.
It follows that $M_t(s):=\Fe(s)/\E \Fe(s)$ is an $L^2$-bounded
martingale in $\WWJ$ (if $J$ is small enough) and we obtain, \cf{}
\eqref{flex},
\begin{equation}\label{flexb}
  \hbmuet(s/\tqq)
=\frac{\Fe_t(s/\tqq)}{|\ctte|}
=\frac{\Fe_t(s/\tqq)}{\Fe_t(0)}
=e^{t(\tgf(s/\tqq)-1)+o(1)}.
\end{equation}
Furthermore, \eqref{tgf} implies
    \begin{equation}
      \begin{split}
\tgf(s)=1+\ii s (\E\etal +\E\etar)  
-\tfrac{s^2}2 \bigpar{\E [\etal\etal\tr]+ \E [\etar\etar\tr] }+o(s^2).
      \end{split}
    \end{equation}
Consequently,
\cf{} \eqref{kais},
it follows from \eqref{flexb} that,
again with $a:=\sqrt t$ and $b:=mt$
and
for any fixed $s\in\bbR$,
a.s.,
\begin{equation}\label{kaisbb}
%  \widehat{\Theta_{\sqrt t,\,mt} (\bmue_t)}(s)
  \widehat{\Thetaab (\bmue_t)}(s)
%=e^{-\ii sb/a} e^{t(\gf(s/a)-1)+o(1)}
=e^{t(\tgf(s/\tqq)-1-\ii  \bm s/\tqq)+o(1)}
\to 
e^{-\frac{s^2}2 \xpar{\E [\etal\etal\tr]+ \E [\etar\etar\tr] }}.
\end{equation}
The proofs of \refT{TYuleb} and \ref{Tbst} for $\bmue$ are completed as
before.

To transfer the results to the internal version, we note that each internal
nod has two children, and that these children comprise all internal and
external nodes except the root.
Hence, 
\begin{equation}
  F_t(s)
%=1+\sum_{v\neq o} e^{\ii s X}
=1+\sum_{v\in\ctti} \bigpar{e^{\ii s X_\vl}+e^{\ii s X_\vr}}
\end{equation}
and
\begin{align}
      \Fe_t(s)-\Fi_t(s)
&=  F_t(s)-2\Fi_t(s)
=1+\sum_{v\in\ctti} \bigpar{e^{\ii s X_\vl}+e^{\ii s X_\vr}-2e^{\ii s X_v}}
\nonumber\\&
=1+\sum_{v\in\ctti}e^{\ii s X_v} \bigpar{e^{\ii s \eta_\vl}+e^{\ii s \eta_\vr}
  -2}.
\end{align}
Thus,
\begin{equation}\label{borg}
  \begin{split}
|\Fe_t(s)-\Fi_t(s)| 
&\le
1+\sum_{v\in\ctti} \bigabs{e^{\ii s \eta_\vl}+e^{\ii s \eta_\vr}-2}
\\&
\le1+\sum_{v\in\ctti} \bigpar{|e^{\ii s \eta_\vl}-1|+|e^{\ii s \eta_\vr}-1|}
\\&
\le1+2\sum_{v\in\ctti} \bigpar{|s\eta_\vl|\land1+|s\eta_\vr|\land1}.
  \end{split}
\end{equation}
Conditioned on the tree process $(\ctt)_t$, the internal nodes are added one
by one, and thus the standard law of large numbers implies that for any
fixed $s\in\bbR$, as \ttoo,
\begin{equation}\label{bamse}
\frac{1}{|\ctti|}\sum_{v\in\ctti} \bigpar{|s\eta_\vl|\land1}
\asto \E [|s\etal|\land1].
\end{equation}
Let again $s(t):=s/\sqrt t$ for some fixed $s\in\bbR$. 
Then for any given $\eps>0$,
$|s(t)|\le\eps$ for all large $t$, and thus \eqref{bamse} implies that a.s.,
\begin{equation}\label{brumma}
\limsup_{\ttoo}\frac{1}{|\ctti|}\sum_{v\in\ctti} \bigpar{|s(t)\eta_\vl|\land1}
\le
\limsup_{\ttoo}\frac{1}{|\ctti|}\sum_{v\in\ctti}\bigpar{ |\eps\eta_\vl|\land1}
= \E [|\eps\etal|\land1].
\end{equation}
As $\eps\to0$, the \rhs{} of \eqref{brumma} tends to 0 by dominated
convergence, and thus \eqref{brumma} implies that
\begin{equation}\label{brummb}
\frac{1}{|\ctti|}\sum_{v\in\ctti} \bigpar{|s(t)\eta_\vl|\land1} \asto0.
\end{equation}
The same holds for $\eta_\vr$, and thus \eqref{borg} implies
\begin{equation}\label{borgg}
  \begin{split}
\biggabs{\frac{\Fe_t(s(t))}{|\ctti|}-\frac{\Fi_t(s(t))}{|\ctti|}} 
&
\le\frac{1}{|\ctti|}
+\frac{2}{|\ctti|}
\sum_{v\in\ctti}\bigpar{|s(t)\eta_\vl|\land1+|s(t)\eta_\vr|\land1}
\asto0.
  \end{split}
\end{equation}
Since $|\ctte|/|\ctti|=1+1/|\ctti|\to1$ a.s., 
\eqref{borgg} and \eqref{flexb} yield, a.s.,
\begin{equation}
  \begin{split}
  \hbmuit(s/\tqq)
=\frac{\Fi_t(s/\tqq)}{|\ctti|}
=\frac{\Fe_t(s/\tqq)}{|\ctte|}\frac{|\ctte|}{|\ctti|}+o(1)
=e^{t(\tgf(s/\tqq)-1)}+o(1).
  \end{split}
\end{equation}
It follows that \eqref{kaisbb} holds also for $\bmui$, and
the results for $\bmui$ now follow
by the arguments used above for $\bmue$
(and earlier in Sections \ref{SpfTYule}--\ref{SpfTrrt}).

Finally, 
\begin{equation}
  \bmu_t = \frac{|\ctti|}{|\ctt|}\bmui_t+\frac{|\ctte|}{|\ctt|}\bmue_t,
\end{equation}
and the results for $\bmu$ follow from the results for $\bmui$ and $\bmue$.
\end{proof}

\begin{example}\label{Eprof}
  Let $\etax=(-1,1)$ (deterministically). Then $X_v$ is the difference
  between the number of right and left steps in the path from the root to
  $v$,
and $\bmu$ (or $\bmui$) is known as the vertical profile of $T$.
\refT{Tbst} shows that the vertical profile \as{} is asymptotically normal,
with
\begin{equation}\label{prof}
  \Theta_{\sqrt{\log n},\,0}(\bmu_n) \asto
N\xpar{0,2}
,
\end{equation}
as proved by \citet{KubaPanholzer}.
This can be compared to (completely different) results for
uniformly random binary trees by
\citet{Marckert} and \citet{SJ185}.
\end{example}

\section{Some open problems}\label{Sproblem}

The results above suggest some possible directions for future research.

\subsection{Stable limits?}

We assume throughout the present paper 
that the offsets have finite second moments.
If this does not hold, we may still conjecture that 
asymptotically,
the empirical distribution $\bmu$ \as{} is as the distribution of a sum of
independent copies of $\eta$, cf.\ \refR{Rpoisson}. 
In particular, this leads to the following problem.
\begin{problem}
  Suppose that the offsets have a distribution in the domain of attraction
  of a stable  distribution $\gL$ with index $\ga<2$. Does 
$\bmu$ after suitable normalization converge \as{} to $\gL$?
\end{problem}

\citet{Fekete} has shown such results 
with convergence in probability
for the \bst.

In the proof above, the second moment is used not only in the final part (\cf{}
\refR{Rpoisson}), but also in showing that $M_t\in\WWJ$ and in the proof of
\refL{LMWWJ}. Hence some new idea is needed for this problem.

Note that it is easy to show that the annealed variable $X_{V_n}$ has the
asymptotic distribution $\gL$, \cf{} Remarks \ref{Rannealed} and \ref{Rbins}.

\subsection{Fluctuations?}

The theorems above yield convergence to a deterministic limit.
What can be said about the fluctuations?
For example:

\begin{problem}
Let $d=1$ and let $\gsx^2:=\E\eta^2\in(0,\infty)$.
For $x\in\bbR$ and $n\ge1$, let $I_{n,x}:=(-\infty, m\log n + x\sqrt{\log n}]$.
Then \refT{Trrt} says that \as, for every fixed $x\in\bbR$,
\begin{equation}\label{delta}
  \bmu_n(I_{n,x}) \to \Phi\bigpar{x/\gsx},
  \end{equation}
where $\Phi$ is the standard normal distribution function.
What can be said about the distribution of the difference between the two
sides in \eqref{delta}?
Is it asymptotically normal?
\end{problem}

%\XXX large deviations

%\section*{Acknowledgement}
%This work  was partially supported by 
%a grant from the Knut and Alice Wallenberg Foundation.

\newcommand\AAP{\emph{Adv. Appl. Probab.} }
\newcommand\JAP{\emph{J. Appl. Probab.} }
\newcommand\JAMS{\emph{J. \AMS} }
\newcommand\MAMS{\emph{Memoirs \AMS} }
\newcommand\PAMS{\emph{Proc. \AMS} }
\newcommand\TAMS{\emph{Trans. \AMS} }
\newcommand\AnnMS{\emph{Ann. Math. Statist.} }
\newcommand\AnnPr{\emph{Ann. Probab.} }
\newcommand\CPC{\emph{Combin. Probab. Comput.} }
\newcommand\JMAA{\emph{J. Math. Anal. Appl.} }
\newcommand\RSA{\emph{Random Struct. Alg.} }
\newcommand\ZW{\emph{Z. Wahrsch. Verw. Gebiete} }
\newcommand\DMTCS{\jour{Discr. Math. Theor. Comput. Sci.} }

\newcommand\AMS{Amer. Math. Soc.}
\newcommand\Springer{Springer-Verlag}
\newcommand\Wiley{Wiley}

\newcommand\vol{\textbf}
\newcommand\jour{\emph}
\newcommand\book{\emph}
\newcommand\inbook{\emph}
\def\no#1#2,{\unskip#2, no. #1,} %(typeset after year) 
\newcommand\toappear{\unskip, to appear}

\newcommand\arxiv[1]{\texttt{arXiv:#1}}
\newcommand\arXiv{\arxiv}

\def\nobibitem#1\par{}


\begin{thebibliography}{99}

\bibitem{Adams}
Robert A. Adams,
\emph{Sobolev Spaces}.
Academic Press, New York, 1975.

\bibitem[Aldous(1993)]{AldousISE}
David Aldous, 
Tree-based models for random distribution of mass. 
\emph{J. Statist. Phys.} \textbf{73}, 625--641. %MR1251658 

\bibitem[Athreya and Ney(1972)]{AN}
Krishna B. Athreya \& Peter E. Ney,
\book{Branching Processes}.
\Springer, Berlin, 1972. %-Verlag

\bibitem[Bandyopadhyay and Thacker(2014)]{BTrate}
Antar Bandyopadhyay \& Debleena Thacker,
Rate of convergence and large deviation for the infinite color P\'olya urn
schemes. 
\emph{Statist. Probab. Lett.} \textbf{92} (2014), 232--240. 
% Bandyopadhyay, Antar; Thacker, Debleena 
% \arXiv{1310.5751}

\bibitem[Bandyopadhyay and Thacker(2017)]{BTinfty}
Antar Bandyopadhyay \& Debleena Thacker,
P\'olya urn schemes with infinitely many colors. 
\emph{Bernoulli} \textbf{23}\no{4B} (2017), 3243--3267. 
%MR3654806 Bandyopadhyay, Antar; Thacker, Debleena 
%\arXiv{1303.7374}

\bibitem[Bandyopadhyay and Thacker(2016+)]{BTnew}
Antar Bandyopadhyay \& Debleena Thacker,
A new approach to P\'olya urn schemes and its infinite color
    generalization.
Preprint, 2016.
\arXiv{1606.05317}

\bibitem{BertiEtAl}
%MR2236634 (2007j:60076) Reviewed
Patrizia Berti, Luca Pratelli \& Pietro Rigo,
Almost sure weak convergence of random probability measures. 
\emph{Stochastics} \textbf{78} (2006), no. 2, 91--97. 


\bibitem[Biggins(1991)]{Biggins-uniform1d}
 J. D. Biggins, %sic
Uniform convergence of martingales in the one-dimensional branching random
walk. 
\emph{Selected Proceedings of the Sheffield Symposium on Applied Probability
  (Sheffield, 1989)}, 159--173, 
IMS Lecture Notes Monogr. Ser., 18, 
Inst. Math. Statist., Hayward, CA, 1991. 

\bibitem[Biggins(1992)]{Biggins-uniform}
 J. D. Biggins, %sic
Uniform convergence of martingales in the branching random walk. 
\emph{Ann. Probab.} 
\textbf{20} (1992), no. 1, 137--151.

\bibitem[Blackwell and  MacQueen(1973)]{BlackwellMcQ}
David Blackwell \&  James B. MacQueen,
Ferguson distributions via P\'olya urn schemes.
\emph{Ann. Statist.} \textbf1 (1973), 353--355. 

\bibitem[Bousquet-M{\'e}lou and Janson(2006)]{SJ185}
 Mireille Bousquet-M{\'e}lou \& Svante Janson,
The density of the ISE and local limit laws for embedded trees.
\emph{Ann. Appl. Probab.} \textbf{16} (2006), no. 3, 1597--1632. 

\bibitem[Chauvin, Drmota and  Jabbour-Hattab(2001)]{ChauvinDJ}
Brigitte Chauvin, Michael Drmota \&  Jean Jabbour-Hattab,
The profile of binary search trees.
\emph{Ann. Appl. Probab.} \textbf{11} (2001), no. 4, 1042--1062. 
%Martingale method

\bibitem{ChauvinKMR}
Brigitte Chauvin, Thierry Klein, Jean-Fran{\c c}ois Marckert
\& Alain Rouault,
%Chauvin, B.; Klein, T.; Marckert, J.-F.; Rouault, A. 
Martingales and profile of binary search trees. 
\emph{Electron. J. Probab.} \textbf{10} (2005), no. 12, 420--435. 

%Martingale method, continuous time

%\bibitem[Devroye and Janson(2011)]{SJ222}
%Luc Devroye \& Svante Janson,
%Distances between pairs of vertices and vertical profile in conditioned
%Galton--Watson trees. 
%\emph{Random Structures Algorithms} \textbf{38} (2011), no. 4, 381--395. 

\bibitem[Drmota(2009)]{Drmota}
Michael Drmota,
\emph{Random Trees}.
\Springer, Vienna, 2009. % Springer-Verlag Wien; SpringerWienNewYork

\bibitem{SJ194}
Michael Drmota, Svante Janson \& Ralph Neininger,
A functional limit theorem for the profile of search trees.
\emph{Ann. Appl. Probab.} \textbf{18} (2008), 288--333. 

\bibitem[Eggenberger and P\'olya(1923)]{EggPol1923}
F. Eggenberger \& G. P\'olya,
\"Uber die Statistik verketteter Vorg\"ange.
\emph{Zeitschrift Angew. Math. Mech.} \textbf3 (1923), 279--289.


\bibitem[Fekete(2010)]{Fekete}
Eric Fekete,
Branching random walks on binary search trees: convergence of the occupation
measure. 
\emph{ESAIM Probab. Stat.} \textbf{14} (2010), 286--298. 

\bibitem{Gut}
Allan Gut,
\emph{Probability: A Graduate Course},
2nd ed., Springer, New York, 2013. % inte -Verlag

\bibitem[Janson(2018+)]{SJ320}  
Svante Janson, 
Random recursive trees and preferential attachment trees are random split trees.
Preprint, 2017.
\arXiv{1706.05487}

\bibitem[Janson(2017+)]{SJ327}
Svante Janson,
Random replacements in P{\'o}lya urns with infinitely many colours.
%Random replacements in \Polya{} urns with infinitely many colours.
Preprint 2017,
\arXiv{1711.09830}


\bibitem[Janson and Marckert(2005)]{SJ164}
%MR2167644 (2006g:60126) Reviewed
Svante Janson \&  Jean-Fran{\c c}ois Marckert,
Convergence of discrete snakes. 
\emph{J. Theoret. Probab.} \textbf{18} (2005), no. 3, 615--647. 

\bibitem[Joffe, Le Cam  and Neveu(1973)]{JoffeLecamNeveu}
Anatole Joffe, Lucien Le Cam \& Jacques Neveu,
Sur la loi des grands nombres pour des variables al{\'e}atoires de Bernoulli
attach{\'e}es {\`a} un arbre dyadique. 
\emph{C. R. Acad. Sci. Paris S{\'e}r. A-B} \textbf{277} (1973), A963-–A964. 


\bibitem[Kallenberg(2002)]{Kallenberg}
Olav Kallenberg.
\book{Foundations of Modern Probability}.
2nd ed., 
\Springer, New York, 2002. %Springer-Verlag New York Berlin Heidelberg


\bibitem[Kuba and Panholzer(2007)]{KubaPanholzer}
Markus Kuba \& Alois Panholzer,
The left-right-imbalance of binary search trees. 
\emph{Theoret. Comput. Sci.} \textbf{370} (2007), no. 1-3, 265--278. 

\bibitem[Mailler and  Marckert(2017)]{MM}
C{\'e}cile Mailler \& Jean-Fran{\c c}ois Marckert,
    Measure-valued P{\'o}lya processes.
\emph{Electron. J. Probab.} \textbf{22} (2017), no. 26, 1--33.
%\arXiv{1610.09057}

\bibitem[Marckert(2004)]{Marckert}
%MR2035871 (2004j:05041) Reviewed
Jean-Fran{\c c}ois Marckert,
The rotation correspondence is asymptotically a dilatation. 
\emph{Random Structures Algorithms} \textbf{24} (2004), no. 2, 118--132. 


\bibitem[Markov(1917)]{Markov1917} 
A. A. Markov,
Sur quelques formules limites du calcul des probabilit\'es.
(Russian.)
\emph{Bulletin de l'Acad\'emie Imp\'eriale des Sciences}
% titel p\ ryska med undertitel p\ franska
\textbf{11}\no3 (1917), 177--186.
%%
% A. A. Markov, “Sur quelques formules limites du calcul des probabilités”,
% Bulletin de l'Académie des Sciences, 11:3 (1917), 177–186 
%http://mi.mathnet.ru/eng/izv/v11/i3/p177 
%http://www.mathnet.ru/php/archive.phtml?jrnid=im&wshow=contents&option_lang=eng

\bibitem{Mazja}
Vladimir G. Maz'ja,
\emph{Sobolev Spaces}.
\Springer, %-Verlag
Berlin, 1985.

\bibitem{NIST}
\emph{NIST Handbook of Mathematical Functions}. 
Edited by Frank W. J. Olver, Daniel W. Lozier, Ronald F. Boisvert and
Charles W. Clark. 
Cambridge Univ. Press, 2010. \\
Also available as 
\emph{NIST Digital Library of Mathematical Functions},
\url{http://dlmf.nist.gov/}

\bibitem{Pisier}
Gilles Pisier,
\emph{Martingales in Banach Spaces}.
Cambridge Univ. Press, Cambridge, UK, 2016.

\bibitem[Pitman(2006)]{Pitman}
Jim Pitman. 
\emph{Combinatorial Stochastic Processes}. 
{\'E}cole d'{\'E}t{\'e} de Probabilit{\'e}s de Saint-Flour
XXXII -- 2002.
%Lectures from the 32nd Summer School on Probability Theory held in
%Saint-Flour, July 7–24, 2002. 
%With a foreword by Jean Picard. 
Lecture Notes in Math. 1875, \Springer, Berlin, 2006. %-Verlag
%x+256 pp. 
%ISBN: 978-3-540-30990-1;

\bibitem[P\'olya(1930)]{Polya1930}
G. P\'olya,
Sur quelques points de la th\'eorie des probabilit\'es.
\emph{Ann. Inst. H. Poincar\'e} \textbf1\no2 (1930), 
117--161.

\bibitem[Uchiyama(1982)]{Uchiyama}
K\=ohei Uchiyama, %Kōhei
Spatial growth of a branching process of particles living in $R^d$.
\emph{Ann. Probab.} \textbf{10} (1982), no. 4, 89--918. 

\bibitem[Yule(1925)]{Yule}
G. Udny Yule,
A mathematical theory of evolution, based on the conclusions of
Dr. J. C. Willis, F.R.S. 
%Philosophical Transactions of the Royal Society B 
%\emph{Philos. Trans. Roy. Soc. London Ser. B}
\emph{Philos. Trans. Roy. Soc. B} \textbf{213} (1925), 21--87.
%DOI: 10.1098/rstb.1925.0002



%\bibitem[??]{??} \XXX










\end{thebibliography}
\end{document}